\renewcommand{\PrintDOI}[1]{\doi{#1}}
\numberwithin{equation}{section}
\numberwithin{figure}{section}
\theoremstyle{plain}
\newtheorem{thm}{Theorem}[section]
\newtheorem{lem}[thm]{Lemma}
\newtheorem{corollary}[thm]{Corollary}
\newtheorem{cor}[thm]{Corollary}
\newtheorem*{claim*}{Claim}
\newtheorem{claim}{Claim}[]
\newtheorem{thm-intro}{Theorem}[]
\newtheorem{conj-intro}[thm-intro]{Conjecture}
\newtheorem*{meta-question*}{Meta question}
\newtheorem*{thm*}{Theorem}
\theoremstyle{definition}
\newtheorem{definition}[thm]{Definition}
\newtheorem{ques}[thm]{Question}
\newtheorem*{example*}{Example}
\DeclareMathOperator{\ingoing}{in}
\DeclareMathOperator{\outgoing}{out}
\newcommand{\orthogonal}{\perp}
\title[]{Enlarging vertex-flames in countable digraphs}
\author[J.~Erde \and J.P.~Gollin \and A.~Jo\'{o}]{Joshua Erde \and J.~Pascal Gollin \and Attila Jo\'{o}}
\address{Joshua Erde, Graz University of Technology, Institute of Discrete Mathematics, Steyrergasse 30, 8010 Graz, Austria}
\email{erde@math.tugraz.at}
\address{J. Pascal Gollin, Discrete Mathematics Group, Institute for Basic Science (IBS), 55 Expo-ro, Yuseong-gu, Daejeon, Korea, 34126}
\email{\tt pascalgollin@ibs.re.kr}
\thanks{The second author was supported by the Institute for Basic Science (IBS-R029-C1).}
\address{Attila Jo\'{o},
University of Hamburg, Department of Mathematics, Bundesstra{\ss}e 55 (Geomatikum), 20146 Hamburg, Germany and 
Alfr\'{e}d R\'{e}nyi Institute of Mathematics, Set theory and general topology research division, 13-15 Re\'{a}ltanoda St., 
Budapest, Hungary}
\email{attila.joo@uni-hamburg.de,  jooattila@renyi.hu}
\thanks{The third author would like to thank the generous support of the Alexander von Humboldt Foundation and NKFIH OTKA-129211.}
\begin{document}

\maketitle

\begin{abstract}
    A rooted digraph is a vertex-flame if for every vertex~$v$ there is a set of internally disjoint directed paths from the root to~$v$ whose set of terminal edges covers all ingoing edges of~$v$. 
    It was shown by Lov\'{a}sz that every finite rooted digraph admits a spanning subdigraph which is a vertex-flame and large, 
    where the latter means that it preserves the local connectivity to each vertex from the root. Calvillo-Vives rediscovered and 
    extended this theorem proving that every vertex-flame of a given finite rooted digraph can be extended to be large. The analogue of Lov\'{a}sz' result for countable digraphs was shown by the third author where the notion of largeness is interpreted in a structural way as in the infinite version of Menger's theorem.
    We give a common generalisation of this and Calvillo-Vives' result by showing that in every countable rooted digraph each vertex-flame can be extended to a large vertex-flame.
    
\end{abstract}

\section{Introduction}

Given a rooted digraph~$D$ with root~$r$ the \emph{local connectivity} from~$r$ to~$v$, written~${\kappa_D(r,v)}$, is the size of the largest set of internally disjoint paths from~$r$ to~$v$ in~$D$. 
Lov\'{a}sz showed in~\cite{lovasz} that there always exists a spanning subgraph which preserves all the local connectivities from~$r$, whilst keeping only~${\kappa_D(r,v)}$ ingoing edges at each~$v$.

More precisely, we say that a rooted digraph is a \emph{vertex-flame} (or shortly \emph{flame}) 
if for every vertex~$v$ (other than the root) there is a set of internally disjoint directed paths from the root to~$v$ whose set of terminal edges is~${\ingoing_D(v)}$ (i.e., the set of ingoing edges of~$v$). 
Lov\'{a}sz showed that every finite rooted digraph~$D$ contains a vertex-flame which is \emph{large}, i.e., it has the same local connectivity from the root to any other vertex as in~$D$.\footnote{Edge-flames can be defined in an analogous manner by replacing `internally disjoint' with `edge-disjoint'. 
In this case, preservation of local edge-connectivities from the root in an edge-flame can be accomplished. 
In fact Lov\'{a}sz  originally proved the edge variant, from which the vertex version can be deduced as a corollary.}

\begin{thm}[Lov\'{a}sz] \label{t:finiteflame}
    If~$D$ is a finite digraph with~${r \in V(D)}$, then there is a spanning subgraph~$F$ of~$D$ such that for every~${v \in V(D)-r}$, 
    \[
        \kappa_D(r,v) = \kappa_{F}(r,v) = |\ingoing_{F}(v)|.
    \]
\end{thm}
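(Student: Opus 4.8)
The plan is to prove the theorem by iteratively deleting a single well-chosen edge, which reduces it to one local claim. Note first that for \emph{any} rooted digraph $H$ and any $v \ne r$ one has $|\ingoing_H(v)| \ge \kappa_H(r,v)$: a maximum family of internally disjoint $r$--$v$ paths has pairwise distinct terminal edges, since two paths sharing a terminal edge $uv$ would share the vertex $u$ (and two $r$--$v$ paths whose terminal edge is $rv$ coincide). Call $v$ \emph{saturated} in $H$ when equality holds, so that $H$ is a vertex-flame exactly when every $v \ne r$ is saturated. The crux is the local claim --- the \emph{Key Lemma} --- that if $v \ne r$ has $|\ingoing_H(v)| > \kappa_H(r,v)$, then some $e \in \ingoing_H(v)$ satisfies $\kappa_{H-e}(r,w) = \kappa_H(r,w)$ for every $w \ne r$. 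Granting it, the theorem follows at once: put $F := D$ and, while $F$ is not a flame, pick an over-saturated $v$ and delete the edge the Key Lemma provides for $F$ and $v$; each step preserves $\kappa_F(r,\cdot) = \kappa_D(r,\cdot)$ while strictly decreasing $|E(F)|$, so after finitely many steps $F$ is large and every vertex is saturated, i.e.\ $\kappa_D(r,v) = \kappa_F(r,v) = |\ingoing_F(v)|$ for all $v \ne r$. (Vertices unreachable from $r$ are harmless: there $\kappa_F(r,v) = 0$ and non-over-saturation forces $|\ingoing_F(v)| = 0$, and the Key Lemma is trivial for them since an in-edge of an unreachable vertex lies on no $r$-path.)

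To prove the Key Lemma I would first pass from vertex- to edge-connectivity via the standard vertex-splitting gadget: replace each $w \ne r$ by two vertices joined by an edge $w^{\ingoing} \to w^{\outgoing}$, redirect each edge $xw$ to $x^{\outgoing} w^{\ingoing}$ (to $r w^{\ingoing}$ if $x = r$), and redirect each edge $wx$ to $w^{\outgoing} x^{\ingoing}$. In the resulting digraph $H'$, internally disjoint $r$--$w$ path systems of $H$ correspond to edge-disjoint $r$--$w^{\ingoing}$ path systems, so $\kappa_H(r,w) = \lambda_{H'}(r,w^{\ingoing})$ (with $\lambda$ the local edge-connectivity), while $\ingoing_{H'}(w^{\ingoing})$ is in canonical bijection with $\ingoing_H(w)$ and deletion of a non-split edge commutes with splitting. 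Hence it suffices to prove the edge analogue of the Key Lemma: if $|\ingoing_G(v)| > \lambda_G(r,v) =: k$ in a rooted digraph $G$, then some $e \in \ingoing_G(v)$ has $\lambda_{G-e}(r,\cdot) = \lambda_G(r,\cdot)$.

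For the edge version I would combine Menger's theorem with submodularity of $X \mapsto |\delta^+(X)|$. By Menger, an edge $e = xy$ is \emph{essential} --- its deletion lowers $\lambda_G(r,w)$ for some $w$ --- precisely when $e$ lies in some minimum $r$--$w$ cut $\delta^+(X)$ (so $x \in X$ and $y, w \notin X$); call such an $X$ \emph{tight}. Then an in-edge $uv$ of $v$ is deletable exactly when $u$ lies in no tight set avoiding $v$. Since $\delta^+(X)$ is also an $r$--$v$ cut whenever $X$ is tight and $v \notin X$, every such cut has at least $k$ edges, and the minimum $r$--$v$ cuts are exactly the tight sets avoiding $v$ whose boundary has precisely $k$ edges. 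The goal is then to show that among the more than $k$ in-edges of $v$ at least one has its tail outside every tight set avoiding $v$; equivalently, that the union $B$ of all tight sets avoiding $v$ still induces an $r$--$v$ cut with at most $k$ edges --- which contradicts $|\ingoing_G(v)| > k$, since $\ingoing_G(v) \subseteq \delta^+(B)$.

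I expect this last step to be the real obstacle. The difficulty is that an in-edge of $v$ could a priori be essential for a vertex $w$ of connectivity far larger than $k$, and then naive uncrossing of the witnessing cuts merely re-derives $\lambda_G(r,w) \ge k$ rather than a contradiction. The remedy I would try is a careful choice of the pieces: take $v$ to have minimum local connectivity among over-saturated vertices, so that any $w$ threatened by deleting an in-edge of $v$ is itself saturated and of high connectivity, and then uncross the relevant minimum $r$--$w$ cuts against a fixed maximum family of $r$--$v$ paths and against the maximal minimum $r$--$v$ cut, aiming to force the offending source-sides back inside a minimum $r$--$v$ cut. In short: the iterated-deletion reduction and the vertex-to-edge gadget are routine, and essentially all the work is in the uncrossing argument for the edge version of the Key Lemma.
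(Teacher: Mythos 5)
Your reduction steps are fine: the iterative-deletion scheme correctly reduces the theorem to the ``Key Lemma'' (the unreachable-vertex caveat is handled correctly), and the vertex-splitting gadget correctly reduces that to its edge version --- this matches the remark in the paper that Lov\'asz proved the edge variant and deduced the vertex variant; note the paper itself only cites Lov\'asz and contains no proof to compare against. The problem is that the edge Key Lemma is not a technical afterthought but the entire content of the theorem, and you do not prove it. Your proposed target --- that the union $B$ of all tight source-sides avoiding $v$ satisfies $|\delta^+(B)|\le k:=\lambda_G(r,v)$ --- is not supported by anything you write, and submodular uncrossing does not deliver it: if $X$ is a minimum $r$--$v$ cut side and $Y$ is a tight side witnessing a minimum $r$--$w$ cut avoiding $v$, then $d^+(X\cap Y)\ge k$ only yields $d^+(X\cup Y)\le d^+(X)+d^+(Y)-d^+(X\cap Y)\le \lambda_G(r,w)$, which is useless when $\lambda_G(r,w)>k$ (and is only meaningful at all when $w\notin X$, which you cannot assume). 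Iterating the union over all tight sets only compounds this, so there is no route from these estimates to $|\delta^+(B)|\le k$.

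Your suggested remedy --- choose $v$ of minimum local connectivity among over-saturated vertices --- does not close the gap either: the vertex $w$ whose connectivity is threatened by deleting an in-edge of $v$ is completely arbitrary (it need not be over-saturated, and typically is not), so its connectivity $\lambda_G(r,w)$ is not controlled by how you choose $v$, and the ``offending source-sides'' cannot be forced into a minimum $r$--$v$ cut by the uncrossing inequalities you have available. You say yourself that ``essentially all the work is in the uncrossing argument,'' and that work is exactly what is missing: Lov\'asz's lemma that an over-saturated vertex has an in-edge lying in no tight cut requires a genuinely finer argument about the family of tight sets than plain submodularity of $X\mapsto|\delta^+(X)|$, and without it the proposal proves nothing beyond routine bookkeeping.
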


Theorem \ref{t:finiteflame} above was rediscovered by Calvillo-Vives in~\cite{calvillo-vives} in a stronger form:

\begin{thm}[Calvillo-Vives]
    \label{thm:finextend}
    Any vertex-flame in a given finite rooted digraph can be extended to a large one. 
\end{thm}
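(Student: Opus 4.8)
The plan is to derive Theorem~\ref{thm:finextend} from the following reformulation of the flame property, used throughout: a spanning subdigraph $F$ of $D$ is a flame if and only if $\kappa_F(r,v)=|\ingoing_F(v)|$ for every $v\in V(D)-r$. Indeed, internally disjoint $r$--$v$ paths have pairwise distinct terminal edges, so $\kappa_F(r,v)\le|\ingoing_F(v)|$ always holds, and equality means, by Menger's theorem, that there are $|\ingoing_F(v)|$ internally disjoint $r$--$v$ paths, which then necessarily cover $\ingoing_F(v)$. In particular every flame $F$ satisfies $|E(F)|=\sum_{v\ne r}\kappa_F(r,v)$. Call an edge $e=(x,v)\in E(D)-E(F)$ \emph{augmenting} for a flame $F$ if $\kappa_{F+e}(r,v)>\kappa_F(r,v)$. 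The pivotal observation is that $F+e$ is then again a flame: at $v$ one has $|\ingoing_{F+e}(v)|=\kappa_F(r,v)+1\ge\kappa_{F+e}(r,v)\ge\kappa_F(r,v)+1$, forcing equality, and at any $u\ne v$ the chain $|\ingoing_{F+e}(u)|=|\ingoing_F(u)|=\kappa_F(r,u)\le\kappa_{F+e}(r,u)\le|\ingoing_{F+e}(u)|$ does the same. Crucially, passing from $F$ to $F+e$ only adds edges, hence preserves containment of the given flame $F_0$.

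First I would run the obvious process: starting from $F:=F_0$, while $F$ admits an augmenting edge $e$ replace $F$ by $F+e$. Each step strictly increases $|E(F)|$, so the process halts after at most $|E(D)|$ steps at a flame $F\supseteq F_0$ with no augmenting edge, and Theorem~\ref{thm:finextend} follows once we know that such an $F$ is large. Thus everything reduces to the following claim, which is the heart of the argument: \emph{if $F$ is a flame in $D$ that is not large, then $F$ admits an augmenting edge.} Note that, applied to the edgeless spanning subdigraph (which is vacuously a flame), this claim together with the process reproves Lov\'{a}sz's Theorem~\ref{t:finiteflame}; so it is logically at least as strong and cannot be obtained from Theorem~\ref{t:finiteflame} for free.

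To prove the claim I would argue by contradiction. Suppose $F$ is a flame with $\mathcal W:=\{v:\kappa_F(r,v)<\kappa_D(r,v)\}\ne\emptyset$ admitting no augmenting edge; observe that the head of an augmenting edge necessarily lies in $\mathcal W$, since $\kappa_{F+e}(r,u)\le\kappa_D(r,u)=\kappa_F(r,u)$ for $u\notin\mathcal W$. Fix $v\in\mathcal W$. As $F$ is a flame, $|\ingoing_F(v)|=\kappa_F(r,v)<\kappa_D(r,v)\le|\ingoing_D(v)|$, so $v$ has an incoming $D$-edge outside $F$; and since any gain in $\kappa(r,v)$ produced by adding a set of edges all pointing at $v$ is already produced by adding a single one of them, the absence of an augmenting edge into $v$ forces that adding \emph{all} incoming $D$-edges of $v$ to $F$ leaves $\kappa(r,v)$ unchanged. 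By Menger's theorem, $F$ therefore has a minimum $r$--$v$ separator $S$ such that no in-neighbour of $v$ in $D$ is reachable from $r$ in $F-S$; write $R$ for the set of vertices reachable from $r$ in $F-S$, so every $F$-edge leaving $R$ ends in $S$. This $S$ cannot be an $r$--$v$ separator in $D$, for that would give $\kappa_D(r,v)\le|S|=\kappa_F(r,v)$; hence some $r$--$v$ path of $D$ avoids $S$, and since $F$-edges cannot take such a path out of $R$, it uses a $D$-edge $(a,b)\notin E(F)$ with $a\in R$. The plan from here is to choose $v$ and $S$ extremally --- $|S|=\kappa_F(r,v)$ minimal, then $R$ inclusion-minimal --- and to repeat the analysis at $b$, uncrossing the minimum separators so obtained (in the vertex-split digraph they become minimum edge-cuts, which form a lattice) in order to merge them into a single $r$--$v$ separator of $D$ of size $\kappa_F(r,v)<\kappa_D(r,v)$, the desired contradiction. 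Carrying out this merging of separators cleanly, while keeping the relevant target vertices inside $\mathcal W$, is the step I expect to be the main obstacle; everything else is routine verification.
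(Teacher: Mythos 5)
Your reduction is fine as far as it goes: the reformulation ``$F$ is a flame iff $\kappa_F(r,v)=|\ingoing_F(v)|$ for all $v\neq r$'' is correct, and so is the observation that adding an edge $e=(x,v)$ with $\kappa_{F+e}(r,v)>\kappa_F(r,v)$ yields again a flame, so that the greedy process terminates in a flame $F\supseteq F_0$ admitting no augmenting edge. But note what this reduction actually says: an edge is augmenting precisely when its addition keeps $F$ a flame, so ``flame with no augmenting edge'' is the same thing as ``subgraph-maximal flame'', and your key claim (``a non-large flame admits an augmenting edge'') is exactly the statement that every maximal flame is large --- which \emph{is} the Lov\'asz/Calvillo-Vives theorem. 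You have therefore reduced the theorem to itself plus an easy greedy argument, and the heart of the matter is left unproven: your proof of the key claim stops at a plan (``choose $v$ and $S$ extremally, repeat the analysis at $b$, uncross the minimum separators and merge them into a single $r$--$v$ separator of $D$ of size $\kappa_F(r,v)$''), and you yourself flag the merging step as an unresolved obstacle. That step is not routine; it is where all the work in any proof of this theorem lives (and, for the infinite analogue, it is essentially what the present paper spends Sections~4--5 on, via incompressibility). As it stands the proposal is a correct setup, not a proof.

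Two smaller points inside the sketch also need care. First, the assertion that a gain in $\kappa(r,v)$ produced by adding a \emph{set} of edges with head $v$ is already produced by a single one of them is true, but not by the naive path argument (a maximum path system in $F+E'$ may use several of the new edges as terminal edges); you need, e.g., to take for each single edge $e$ a minimum separator $S_e$ of $F+e$ and pass to the meet of these in the lattice of minimum $r$--$v$ separators of $F$, whose $r$-side avoids every tail, and check that this common separator still separates after adding all of $E'$ at once. Second, the usual $rv$-edge nuisance in Menger-type statements (the paper systematically works in $D-rv$, cf.\ the definitions of $\mathfrak{P}_D(v)$, $\mathfrak{S}_D(v)$) has to be handled explicitly when you invoke ``a minimum separator $S$ such that no $D$-in-neighbour of $v$ is reachable from $r$ in $F-S$''. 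Neither of these is fatal, but the central uncrossing/induction argument is missing, so the theorem is not yet proved.
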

Since in particular the spanning subgraph with empty edge set is a flame, this clearly implies Theorem~\ref{t:finiteflame}.

Recently, the third author proved a generalisation of Theorem~\ref{t:finiteflame} for countably infinite digraphs~\cite{attila-flames}. 
As it is common in the context of infinite graphs, a naive cardinality-based approach yields a much weaker result than a more structural generalisation of the problem. 
For example, Erd\H{o}s conjectured a structural generalisation of Menger's theorem for infinite graphs, which was eventually proved in an influential paper of Aharoni and Berger~\cite{infinite-menger}. 
In particular they showed that in every digraph~$D$ for every~${x \neq y \in V(D)}$ with~${xy \notin E(D)}$ 
there is an \emph{orthogonal pair} of a set~$\mathcal{P}$ of internally disjoint directed paths from~$x$ to~$y$ 
and an ${S \subseteq V(D)\setminus \{ x,y\} }$ separating~$y$ from~$x$ (i.e., meeting every path from~$x$ to~$y$), 
where orthogonality means that~$S$ contains exactly one vertex from each path in~$\mathcal{P}$, and no other vertices.

Motivated by this, the third author gave a structural characterisation of when it can be said that a subgraph preserves the local connectivity from the root to each vertex in an infinite digraph in the spirit of the Aharoni-Berger theorem. 
More precisely we say that a spanning subdigraph~$L$ of a rooted digraph~$D$ is $D$-\emph{large} (or \emph{large}) if for every~${v \neq r}$ the digraphs~${D-rv}$ and~${L-rv}$ share such an orthogonal pair for~$r$ and~$v$ and furthermore~${rv \in E(L)}$ if~${rv \in E(D)}$. 
For more details on the definition, see Section~\ref{sec:prelims}. 

With this generalisation, the third author showed the following.

\begin{thm}
   \cite{attila-flames}*{Theorem~1.2}\label{t:atillaflame}
   Every countable rooted digraph contains a large vertex-flame.
\end{thm}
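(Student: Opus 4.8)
\emph{Proof plan.} First I would make some harmless reductions: assume~$D$ is countably infinite and simple, and --- after deleting the vertices not reachable from~$r$, which lie on no directed path starting at~$r$ and so are irrelevant both to the flame condition and to largeness --- assume every vertex of~$D$ is reachable from~$r$. Fix an enumeration~${V(D) - r = \{v_n : n \in \mathbb{N}\}}$. I would also record the following reformulation of largeness, immediate from the definition in Section~\ref{sec:prelims}: a spanning flame~$L$ of~$D$ is~$D$-large if and only if~${rv \in E(L)}$ whenever~${rv \in E(D)}$, and for every~$n$ the subdigraph~${L - rv_n}$ contains a set~$\mathcal{P}_n$ of internally disjoint directed paths from~$r$ to~$v_n$ together with a set~${S_n \subseteq V(D) \setminus \{r, v_n\}}$ that separates~$v_n$ from~$r$ in~${D - rv_n}$, meets each member of~$\mathcal{P}_n$ exactly once, and contains no further vertices. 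Call such a pair a \emph{certificate} for~$v_n$; by the Aharoni--Berger theorem~\cite{infinite-menger} a certificate for~$v_n$ always exists inside~${D - rv_n}$, and the whole problem is to produce, inside a single well-chosen spanning flame, a certificate for every~$n$ at once.

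The plan is to build such a flame~$L$ as the limit of an increasing sequence of partial approximations. Concretely, I would recursively construct a~$\subseteq$-increasing chain of spanning flames~${F_0 \subseteq F_1 \subseteq \cdots}$ of~$D$ together with a~$\subseteq$-increasing chain of \emph{frozen} subdigraphs~${R_0 \subseteq R_1 \subseteq \cdots}$ with~${R_n \subseteq F_n}$, subject to the rule that an edge of~$R_n$ is never removed at a later stage. I would start from~$F_0$ being the spanning subdigraph with edge set~${\{rv : rv \in E(D)\}}$, which is a flame, and~${R_0 := F_0}$. At stage~$n$ I would act on~$v_n$: take a certificate for~$v_n$ in~${D - rv_n}$ supplied by Aharoni--Berger and transform it into a certificate lying inside the current flame, by repeatedly rerouting each of its (possibly infinite, possibly infinitely many) paths backwards through the fans that the flame provides at the vertices they pass through --- the flame property is exactly what guarantees each such rerouting reaches~$r$ --- thereby obtaining~$(\mathcal{P}_n, S_n)$ with~${\mathcal{P}_n \subseteq F_{n+1} - rv_n}$; then I would add to the frozen part one further edge of each path of~$\mathcal{P}_n$. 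The local exchange moves needed to restore the flame property after these finitely-many-at-a-time changes are exactly those underlying the finite theorems of Lov\'{a}sz and Calvillo-Vives (Theorems~\ref{t:finiteflame} and~\ref{thm:finextend}). If the stages are scheduled so that each~$v_n$ is treated cofinally often and each of its frozen paths is extended by one edge each time, then~${L := \bigcup_n R_n}$ is a spanning flame --- the flame condition at any vertex being witnessed by a fan that becomes frozen after finitely many stages --- and contains every~$\mathcal{P}_n$, hence every certificate, so~$L$ is~$D$-large.

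The main obstacle, and the reason the naive cardinality-based argument is so much weaker here, is precisely that the certificates can be infinite, so no single stage can finish the work for a given vertex; the stages must therefore be interleaved, and one must guarantee that an edge frozen on behalf of one vertex is never later needed by another. Intertwined with this is the task of preserving the \emph{global} flame property --- a simultaneous constraint at every vertex --- under the rerouting moves, and of showing that at each vertex this property stabilises after finitely many stages so that its fan may be permanently frozen. I expect essentially all the work of the countable case to lie in organising these finite local moves, together with the structural targets supplied by Aharoni--Berger, into a consistent convergent recursion; the finite theorems and the Aharoni--Berger theorem are merely the raw material.
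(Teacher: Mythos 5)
Within this paper the statement you were asked to prove is quoted from~\cite{attila-flames} and is otherwise obtained as the special case of Theorem~\ref{thm:extend} in which the given flame is edgeless; so the fair comparison is with the proof of Theorem~\ref{thm:extend}. Your skeleton --- enumerate ${V(D)-r}$, handle one vertex per stage using an Aharoni--Berger certificate, interleave stages and pass to a limit --- does match the paper's recursion, but the mechanism you propose for making the stages compatible has a genuine gap, and it sits exactly where the real work of the theorem is.

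First, your limit ${L=\bigcup_n R_n}$ is not shown to be a flame, and the reason you offer (``the flame condition at any vertex is witnessed by a fan that becomes frozen after finitely many stages'') cannot work as stated: every later stage freezes the edges of a path-system chosen for some \emph{other} vertex, and those paths pass through~$v$ and keep adding new ingoing edges at~$v$ at infinitely many stages, whereas the flame condition at~$v$ demands one internally disjoint system whose terminal edges are \emph{all} of~${\ingoing_L(v)}$. The paper's proof resolves this by making the certificate chosen at stage~$n$ cover every in-edge of~$v_n$ accumulated so far (property~(1) there, obtained from Pym's theorem via Corollary~\ref{cor:covering-independent}, after first proving the current ambient digraph is large via Lemma~\ref{lem:maintainlarge}) and then \emph{pruning}: all other in-edges of~$v_n$ are deleted from the ambient digraph, so no in-edge of~$v_n$ is ever added again. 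Your plan has no pruning step, and pruning is precisely the hard part: deleting ${\ingoing(v_n)}$-edges can destroy flame-witnesses and Erd\H{o}s--Menger certificates at other vertices~$w$, because the systems witnessing them may need to route through~$v_n$. Guaranteeing that one can choose the kept set ${I^*\in\mathcal{G}(v_n)}$ so that this never happens is the content of the key Lemma~\ref{lem:key}, whose proof requires the incompressibility machinery of Section~\ref{sec:incompressibility} (Corollary~\ref{cor:extend-intersect-family}, Lemma~\ref{lem:bubble}); the obstruction sets that~$I^*$ must meet can be infinite and infinitely many, an essentially infinitary phenomenon with no analogue in the finite exchange arguments of Lov\'{a}sz and Calvillo-Vives that you invoke. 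Finally, your ``reroute backwards through the fans'' step is also unjustified on its own terms: rerouting an Aharoni--Berger system through the flame does not obviously preserve internal disjointness, nor orthogonality to a set that is still a separator \emph{in~$D$}, which is what largeness requires. So the proposal is a reasonable outline of the recursion, but it omits the two ingredients (covering-and-pruning at each vertex, and the key lemma ensuring pruning is harmless) without which the limit is neither provably a flame nor provably large.
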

After this infinite generalisation of Theorem~\ref{t:finiteflame} it was an open question if Theorem~\ref{thm:finextend} remains true in infinite settings. 
Our main result is to show that this is indeed the case.

\begin{restatable}{thm}{extend}\label{thm:extend}
    Let~$F$ be a vertex-flame in a countable rooted digraph~${D}$. 
    Then there is a large vertex-flame~$F^*$ in $D$ with~${F \subseteq F^*}$.
\end{restatable}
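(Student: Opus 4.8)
The plan is to dispose of the finite case (that is Theorem~\ref{thm:finextend}), make one harmless enlargement of~$F$, and then build~$F^*$ by an exhaustive construction of length~$\omega$. So assume~$D$ is countably infinite. First I would note that one may assume~$F$ already contains every edge~$rv\in E(D)$: adjoining such an edge to a flame leaves it a flame, since the one-edge path from~$r$ to~$v$ is internally disjoint from every other path and can be added to a linkage covering~$\ingoing(v)$, while no new ingoing edge is created anywhere else; and since having all root-edges is part of the definition of largeness this costs us nothing. Now fix an enumeration of~$V(D)\setminus\{r\}$ in which every vertex occurs infinitely often, and build a chain of flames~$F=F_0\subseteq F_1\subseteq\cdots$ in~$D$ as follows: at step~$n$, with~$v$ the current vertex, set~$F_{n+1}=F_n$ if~$F_n$ is already large at~$v$, and otherwise apply the Augmentation Lemma below to obtain~$F_{n+1}\supseteq F_n$ that is closer to being large at~$v$. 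Finally put~$F^*=\bigcup_n F_n$.

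Two things then have to be checked, and the construction is arranged to make them work. The first is that \emph{largeness at a fixed vertex is preserved under adding edges}: if a pair~$(\mathcal P,S)$ is an orthogonal pair for~$r,v$ in both~$D-rv$ and~$F_n-rv$ then it remains one in~$D-rv$ and~$F_{n+1}-rv$, and the root-edge requirement cannot be destroyed. Hence, provided the Augmentation Lemma guarantees that a vertex returned to infinitely often is eventually large, $F^*$ is large at every vertex. The second is that~$F^*$ is itself a flame; here I would use a compactness-type reformulation — in a countable rooted digraph the flame property at~$v$ is equivalent to every finite subset of~$\ingoing(v)$ being covered by a linkage — together with a bookkeeping discipline that at each vertex eventually freezes both its set of ingoing edges and a linkage covering them, so that the union inherits the property.

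The heart of the argument is the Augmentation Lemma: if~$F'$ is a flame in~$D$ containing all root-edges and is not large at~$v$, then there is a flame~$F''$ with~$F'\subseteq F''\subseteq D$, obtained by adding only finitely many edges, that is strictly closer to being large at~$v$. Following Lov\'asz and Calvillo--Vives, I would take a linkage~$\mathcal P$ from~$r$ to~$v$ in~$F'$ covering~$\ingoing_{F'}(v)$ — such~$\mathcal P$ exists precisely because~$F'$ is a flame — note that the failure of largeness at~$v$ forces a finite augmenting walk~$W$ from~$r$ to~$v$ in~$D$ relative to~$\mathcal P$ (the Aharoni--Berger orthogonal pair of~$D-rv$ being used to locate the deficiency), and set~$F''$ to be~$F'$ together with the forward edges of~$W$. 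The point is then that~$F''$ is again a flame: the new ingoing edges introduced by~$W$ have to be rerouted back to~$r$, and this works because~$F'$ was a flame, so that the usual rerouting/uncrossing argument for flames goes through. The appropriate progress measure is the amount of a fixed orthogonal pair of~$D-rv$ that has been realised inside~$F''$; when~$\kappa_D(r,v)$ is finite this is just~$\kappa_{F''}(r,v)$, which strictly increases, so finitely many applications make~$v$ large, while when it is infinite one application realises at least one more path of the pair, and cofinally many applications realise all of it.

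I expect the main obstacle to be the Augmentation Lemma in the case~$\kappa_D(r,v)=\aleph_0$. There cardinality bookkeeping is useless, and one must argue structurally that a countable flame which is not yet large at~$v$ still admits a finite augmenting walk relative to a suitably chosen maximal linkage, and that adding it preserves the flame property. This is where the machinery behind the Aharoni--Berger theorem and behind the third author's proof of Theorem~\ref{t:atillaflame} — maximal waves, and the uncrossing of linkages with separators — has to be imported and combined with the flame-preservation argument, and it is where being constrained to keep all of~$F$, rather than being free to delete and re-add edges as when one merely builds \emph{some} large flame, costs the most. A second genuine point is to ensure that at the limit one really has an orthogonal pair of~$D-rv$ inside~$F^*-rv$, and not merely the correct local connectivity; this forces one to commit, during the construction, to the separators being aimed at.
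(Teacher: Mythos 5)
The decisive gap is in your limit step. The ``compactness-type reformulation'' you rely on -- that in a countable rooted digraph the flame property at~$v$ is equivalent to every finite subset of~$\ingoing(v)$ being covered by a linkage -- is false, and with it the plan to deduce that $F^*=\bigcup_n F_n$ is a flame collapses. Concretely, let $r$ have out-neighbours $w_1,w_2,\dots$, let $u_0,u_1,u_2,\dots$ all send an edge to~$v$, and put in the edges $w_iu_i$ for $i\ge 1$ and $w_iu_0$ for all~$i$. Every finite subset of~$\ingoing(v)$ lies in $\mathcal{G}(v)$ (route $u_0$ through an unused~$w_j$), but $\ingoing(v)\notin\mathcal{G}(v)$, since any path ending in $u_0v$ uses some~$w_j$, which is the only way to reach~$u_j$. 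The same example shows that an increasing union of flames need not be a flame: taking $F_n$ to consist of the paths $rw_iu_iv$ for $i\le n$ together with the edges $rw_i,w_iu_0$ for $i\le n+1$ and $u_0v$, each $F_n$ is a flame and $F_n\subseteq F_{n+1}$, yet the union is not a flame at~$v$. So a purely monotone construction that revisits each vertex infinitely often cannot secure the flame property in the limit; and the ``freezing'' discipline you invoke is not available in such a construction, because when $\kappa_D(r,v)=\aleph_0$ becoming large at~$v$ forces you to keep adding ingoing edges at~$v$ (and, via the rerouting needed to restore the flame property after each augmenting walk, at other vertices) cofinally, so in-neighbourhoods need never stabilise -- and even where they do, a union of linkages covering ever larger finite portions of~$\ingoing(v)$ need not yield one covering all of it, as the example shows.

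This is exactly the difficulty the paper's machinery is designed to overcome, and its construction is deliberately \emph{not} monotone: each vertex $v_n$ is processed once, an Erd\H{o}s--Menger system $\mathcal{P}_n\in\mathfrak{P}_D(v_n)$ covering the already committed ingoing edges is added, and, crucially, all remaining ingoing edges of $v_n$ are deleted from the ambient graph via $L_n=L_{n-1}\upharpoonright_{v_n}\ingoing_{G_n}(v_n)$, so that $\ingoing_{F^*}(v_n)=E^+(\mathcal{P}_n)\cup\{rv_n\}$ is fixed at stage~$n$ and both largeness and the flame property at $v_n$ are settled there and then. The real content is that these deletions do not ruin the other vertices, which is the quasi-flame reduction (Corollary~\ref{t:large}) together with the key Lemma~\ref{lem:key}, proved via incompressibility; your proposal has no counterpart to this. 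Relatedly, your Augmentation Lemma itself needs a proof that after adding the forward edges of an augmenting walk the enlarged in-sets at the intermediate vertices can still be covered inside the new graph -- in an infinite flame this is not automatic, and in the paper it is precisely the quasi-flame property of the ambient graph $L_n$ (not of the flame being built) that guarantees it. Finally, you correctly flag, but do not resolve, the need to commit to a fixed orthogonal pair while augmenting walks keep rerouting the path system; flagging that issue is not a proof.
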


While Theorem~\ref{t:atillaflame} only guarantees the existence of some large flame, our main result ensures that every flame can be extended to a large flame, and hence shows that every subgraph-maximal flame is large.
It is worth mentioning that a subgraph-maximal flame may not always exist. 
Indeed, consider the digraph in Figure~\ref{f:ex-flame} rooted in~$r$ with disjoint sets~$V_0$,~$V_1$,$V_2$ of vertices, where~$V_0$ and~$V_2$ are countably infinite and~$V_1$ is uncountable, which contains all edges of the form~$rv$ for~${v \in V_0}$ as well as for~${i \in \{0,1\}}$ all edges of the from~$vw$ for all~${v \in V_i}$ and~${w \in V_{i+1}}$. 
A large subgraph must contain all the outgoing edges of~$r$. 
For~${v \in V_1}$, the set of all~$(r,v)$-paths is the only Erd\H{o}s-Menger path-system, 
and thus a large subgraph needs to keep all the edges between~$V_0$ and~$V_1$. 
Finally, for~${v \in V_2}$ the only Erd\H{o}s-Menger separation is~$V_0$. 
To be large, infinitely many ingoing edges for every~${v \in V_2}$ must be kept. 
The resulting large subgraph is a flame if and only if for every~${v \in V_2}$ the set of the kept ingoing edges is only countably infinite.  
Therefore there is no maximal large flame, and hence no maximal flame.

    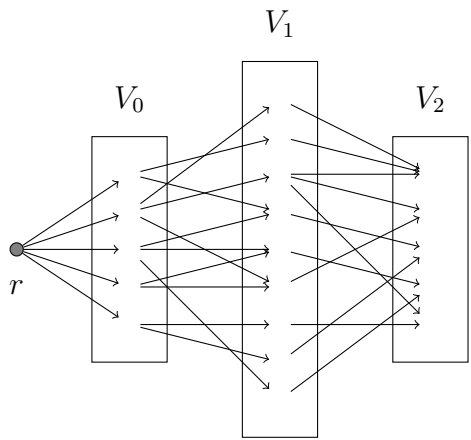
\begin{figure}[htbp]
        \centering
        \begin{tikzpicture}
            \node[circle, draw, fill=black!50, inner sep=0pt, minimum width=5pt] (v1) at (-0.5,-0.5) {};
            \node at (-0.5,-1) {$r$};
            \draw (0.5,1) rectangle (1.5,-2);
            \draw (2.5,2) rectangle (3.5,-3);
            \draw (4.5,1) rectangle (5.5,-2);
            \node at (1,1.5) {$V_0$};
            \node at (3,2.5) {$V_1$};
            \node at (5,1.5) {$V_2$};
            
            \node (v2) at (1,0.5) {};
            \node (v3) at (1,0) {};
            \node (v4) at (1,-0.5) {};
            \node (v5) at (1,-1) {};
            \node (v6) at (1,-1.5) {};
            \node (v15) at (3,1.5) {};
            \node (v7) at (3,1) {};
            \node (v11) at (3,0.5) {};
            \node (v8) at (3,0) {};
            \node (v12) at (3,-0.5) {};
            \node (v9) at (3,-1) {};
            \node (v13) at (3,-1.5) {};
            \node (v10) at (3,-2) {};
            \node (v14) at (3,-2.5) {};
            \node (v16) at (5,0.5) {};
            \node (v19) at (5,0) {};
            \node (v18) at (5,-0.5) {};
            \node (v20) at (5,-1) {};
            \node (v17) at (5,-1.5) {};
            
            \draw[->] (v1) edge (v2);
            \draw[->] (v1) edge (v3);
            \draw[->] (v1) edge (v4);
            \draw[->] (v1) edge (v5);
            \draw[->] (v1) edge (v6);
            
            \draw[->] (v2) edge (v7);
            \draw[->] (v5) edge (v9);
            \draw[->] (v3) edge (v11);
            \draw[->] (v4) edge (v12);
            \draw[->] (v6) edge (v13);
            \draw[->] (v2) edge (v8);
            \draw[->] (v5) edge (v12);
            \draw[->] (v3) edge (v9);
            \draw[->] (v6) edge (v10);
            \draw[->] (v4) edge (v14);
            \draw[->] (v3) edge (v15);
            \draw[->] (v4) edge (v8);
            \draw[->] (v15) edge (v16);
            \draw[->] (v7) edge (v16);
            \draw[->] (v13) edge (v17);
            \draw[->] (v8) edge (v18);
            \draw[->] (v11) edge (v19);
            \draw[->] (v12) edge (v20);
            \draw[->] (v11) edge (v17);
            \draw[->] (v11) edge (v16);
            \draw[->] (v10) edge (v18);
            \draw[->] (v14) edge (v20);
            \draw[->] (v9) edge (v19);
        \end{tikzpicture}
        \label{f:ex-flame}
        \caption{A digraph without a subgraph-maximal flame.}
    \end{figure} 
    
For the proof of Theorem~\ref{thm:extend} we need some lemmas from~\cite{attila-flames} as well as a new framework around the notion of \emph{incompressibility}, which play a key role for the proof. 
We say a set of vertices~$X$ is \emph{incompressible} to another set of vertices~$Y$ if there is a system of disjoint directed paths from~$X$ to~$Y$ that covers all the vertices of~$X$ and for every such path-system it necessarily covers all the vertices of~$Y$ as well. 
Clearly when~$X$ and~$Y$ are finite, the second part of the definition is equivalent to the condition that~${|X| = |Y|}$, however for infinite sets of vertices this is a more delicate structural condition. 

We note that it is still open as to whether the condition that the digraph is countable can be removed from Theorem~\ref{t:atillaflame}.
\begin{ques}
    Does every rooted digraph contain a large vertex-flame? 
\end{ques}
Preservation of edge-connectivity from the root can be also generalised structurally. 
An~${L \subseteq D}$ is edge-large if it contains for each~${v \neq r}$ a system~$\mathcal{P}$ of edge-disjoint paths such that there is a transversal~$C$ for ${\{ E(P) \colon P \in \mathcal{P} \}}$ which is an $rv$-cut in~$D$. 
Although the proofs of the vertex and edge variants are analogous in the finite case, the edge version seems to be strictly harder in infinite digraphs. 
Even the countable case is wide open. 
\begin{ques}
    Does every countable rooted digraph contain an edge-large edge-flame?
\end{ques}

The structure of this paper is as follows. 
In Section~\ref{sec:prelims} we give some basic definitions and citations for some theorems that we use. 
In particular in Section~\ref{subsec:quasi-flames} we will introduce the notion of a $G$-quasi-flame and state a key lemma, using which, in Section~\ref{sec:proof}, we will prove Theorem~\ref{thm:extend}. 
In Section~\ref{sec:incompressibility} we will introduce the concept of incompressibility and develop the tools that we need to prove the key lemma in Section~\ref{sec:key-lemma}.

\section{Preliminaries}
\label{sec:prelims}

\subsection{Basic notation}
\label{subsec:notation} \ 

In this paper, $D$ will denote a (usually infinite) digraph with vertex set~${V(D)}$ and edge set~${E(D)}$. 
For the questions we are considering we may assume without loss of generality that~$D$ does not contain any parallel edges or loops. 

We denote an edge~$e$ of~$D$ directed from~$v$ to~$w$ also by the string~${vw}$. 
Here,~$v$ denotes the \emph{tail} of~$e$ and~$w$ denotes the \emph{head} of~$e$. 
Often we will consider~$D$ as a \emph{rooted digraph}, i.e., a digraph with one of its vertices~$r$ marked as a root. 
For simplicity we may always assume that the root has no ingoing edges, since such edges are not contained in any directed path starting from the root, and hence they are neither contained in any flame nor relevant for the notion of largeness.
Since the root will be the same vertex~$r$ in the whole paper, we omit it from our notation without risking any confusion.

Given a vertex~${v \in V(D)}$, we denote by~$\ingoing_D(v)$ and~$\outgoing_D(v)$ the set of ingoing and outgoing edges of~$v$ respectively. 
Here, and for other notation given in this section, we introduce the subscript since we will often need to work inside subdigraphs of~$D$.

For a vertex or edge~$x$ we denote by~${D - x}$ the digraph obtained from~$D$ by deleting~$x$. 
Similarly for a set~$X$ of either vertices or edges we denote by~${D - X}$ the digraph obtained from~$D$ by deleting~$X$. 
Lastly, given a set~$X$ and an element~${x \in X}$, we denote by~${X - x}$ the set~${X \setminus \{x\}}$.

Let~${v \in V(D)}$ and ${I \subseteq \ingoing_D(v)}$. 
We define the \emph{restriction of~$D$ to~$I$ at~$v$} as the digraph~${D - (\ingoing_D(v) \setminus I)}$ and denote it by~${D \upharpoonright_v I}$.

\subsection{Path-systems}
\label{subsec:path-systems} \ 

Let~${x,y \in V(D)}$ be distinct and let~${X, Y \subseteq V(D)}$. 
We introduce the following notation for paths.

\begin{itemize}
    \item An \emph{${(x, y)}$-path} is a directed path with initial vertex~$x$ and terminal vertex~$y$. 
        We call the ${(x,y)}$-path consisting of a single edge~$xy$ \emph{trivial}. 
    \item An \emph{${(X, Y)}$-path} is a directed path whose initial vertex is contained in~$X$, whose terminal vertex is contained in~$Y$ and which is internally disjoint from~${X \cup Y}$. 
    \item An \emph{${(x, Y)}$-path} is a directed path whose initial vertex is~$x$, whose terminal vertex is contained in~$Y$ and which is internally disjoint from~${Y}$. 
    \item An \emph{${(X, y)}$-path} is a directed path whose initial vertex is contained in~$S$, whose terminal vertex is~$y$ and which is internally disjoint from~${X}$. 
\end{itemize}

\noindent 
We also introduce the following notation for sets of paths. 

\begin{itemize}
    \item An \emph{${(x,y)}$-path-system} is a set of ${(x,y)}$-paths, which are pairwise internally disjoint. 
    \item An \emph{${(X,Y)}$-path-system} is a set of ${(X,Y)}$-paths, which are pairwise disjoint. 
    \item An \emph{${(x, Y)}$-path-system} is a set of ${(x, Y)}$-paths, which are pairwise disjoint but for~$x$.
    \item An \emph{${(X, y)}$-path-system} is a set of ${(X, y)}$-paths, which are pairwise disjoint but for~$y$. 
\end{itemize}

\noindent 
For a set~$\mathcal{P}$ of paths, 
we write~${V^{-}(\mathcal{P})}$ and~${E^{-}(\mathcal{P})}$ for the set of the initial vertices, and initial edges, respectively, of the paths in~$\mathcal{P}$. 
Similarly, we write~${V^{+}(\mathcal{P})}$ and~${E^{+}(\mathcal{P})}$ for the set of the terminal vertices, and terminal edges, respectively, of the paths in~$\mathcal{P}$. 

\begin{itemize}
    \item An \emph{${(x,y)}$-separation} is a set~${S \subseteq V(D) \setminus \{ x,y \}}$ that meets every ${(x,y)}$-path. 
    \item An \emph{${(X,Y)}$-separation} is a set~${S \subseteq V(D)}$ of vertices that meets every ${(X,Y)}$-path. 
\end{itemize}

A set of paths~$\mathcal{P}$ and a vertex set~$S$,
are \emph{orthogonal} to each other if~${|V(P) \cap S| = 1}$ for all~${P \in \mathcal{P}}$ and~${S \subseteq V(\mathcal{P})}$. 
We write~${\mathcal{P} \orthogonal S}$ if~$\mathcal{P}$ and~$S$ are orthogonal. 

Let~${x,y \in V(D)}$ with~${xy \notin E(D)}$ and let~${X,Y \subseteq V(D)}$.
\begin{itemize}
    \item Let~$\mathcal{P}$ be an ${(x,y)}$-path-system. 
        We call~$\mathcal{P}$ an \emph{Erd\H{o}s-Menger ${(x,y)}$-path-system} if there exists an ${(x,y)}$-separation~$S$ with~${\mathcal{P} \orthogonal S}$. 
        Similarily, an ${(x,y)}$-separation~$S$ is called an \emph{Erd\H{o}s-Menger ${(x,y)}$-separation} if there is an ${(x,y)}$-path-system~$\mathcal{P}$ with~${\mathcal{P} \orthogonal S}$.
        We denote the set of Erd\H{o}s-Menger ${(x,y)}$-path-systems by~${\mathfrak{P}_D(x,y)}$ 
        and the set of Erd\H{o}s-Menger ${(x,y)}$-separations by~${\mathfrak{S}_D(x,y)}$. 
    \item Let~$\mathcal{P}$ be an ${(X,Y)}$-path-system. 
        We call~$\mathcal{P}$ an \emph{Erd\H{o}s-Menger ${(X,Y)}$-path-system} if there exists an ${(X,Y)}$-separation~$S$ with~${\mathcal{P} \orthogonal S}$. 
        An ${(X,Y)}$-separation~$S$ is called an \emph{Erd\H{o}s-Menger ${(X,Y)}$-separation} if there is an ${(X,Y)}$-path-system~$\mathcal{P}$ with~${\mathcal{P} \orthogonal S}$.
        We denote the set of Erd\H{o}s-Menger ${(X,Y)}$-path-systems by~${\mathfrak{P}_D(X,Y)}$ 
        and the set of Erd\H{o}s-Menger ${(X,Y)}$-separations by~${\mathfrak{S}_D(X,Y)}$.
\end{itemize}

Often, for a rooted~$D$ and~${v \in V(D)-r}$, we will want to consider Erd\H{o}s-Menger ${(r,v)}$-path-systems and -separations. 
If~$rv$ is an edge, then these are not defined above, and instead we will want to consider such path-systems and separations in~${D - rv}$. 
Hence, both to simplify and unify notation, we will use~${\mathfrak{P}_D(v)}$ and~${\mathfrak{S}_D(v)}$ as an abbreviation for~${\mathfrak{P}_{D-rv}(r,v)}$ and~${\mathfrak{S}_{D-rv}(r,v)}$ respectively. 

We define a partial order on~${\mathfrak{S}_D(X,Y)}$ as follows:
We write ${S \unlhd T}$ if~$S$ is an ${(X,T)}$-separation. 

\begin{lem}
    \label{lem:completelattice}
    \cite{attila-lattice}
    $\mathfrak{S}_D(X,Y)$ forms a complete lattice under~$\unlhd$.
\end{lem}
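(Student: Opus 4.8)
The plan is to prove that $\mathfrak{S}_D(X,Y)$ is a complete lattice under $\unlhd$ by establishing a Galois-type correspondence between Erd\H{o}s-Menger separations and Erd\H{o}s-Menger path-systems, then exploiting a monotonicity structure on path-systems. First I would verify that $\unlhd$ is a partial order: reflexivity is immediate (every $S \in \mathfrak{S}_D(X,Y)$ is trivially an $(X,S)$-separation), and antisymmetry follows from the fact that if $S$ is an $(X,T)$-separation and $T$ is an $(X,S)$-separation with both orthogonal to path-systems, then along a path-system witnessing $S$ one forces $S = T$ using the orthogonality constraint $|V(P) \cap S| = 1$. Transitivity needs the observation that concatenating the "front portions" of paths through the separations behaves well. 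The crux, however, is producing infima (or suprema) for arbitrary subsets $\mathcal{A} \subseteq \mathfrak{S}_D(X,Y)$; once arbitrary infima exist in a poset with a top element, it is automatically a complete lattice, so I would focus on infima.

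To build $\bigwedge \mathcal{A}$, the key step is to associate to each $S \in \mathfrak{S}_D(X,Y)$ the vertex set $\mathrm{Reach}(S)$ of all vertices lying on some initial segment of an $(X,Y)$-path up to (and including) its meeting point with $S$ — equivalently, the "left side" of the separation $S$. Orthogonality ensures this left side is well-defined and that $S$ is precisely its frontier. Then $S \unlhd T$ should translate into $\mathrm{Reach}(S) \subseteq \mathrm{Reach}(T)$, turning $\unlhd$ into set-inclusion on a family of subsets of $V(D)$. The candidate infimum is the separation whose left side is $\bigcap_{S \in \mathcal{A}} \mathrm{Reach}(S)$; I would show this intersection is again the left side of some $S^* \in \mathfrak{S}_D(X,Y)$ by exhibiting a path-system orthogonal to its frontier. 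This is where I expect to use, in an essential way, a "uncrossing" or submodularity argument for Erd\H{o}s-Menger separations together with the Aharoni--Berger theorem to certify that the frontier of the intersection is genuinely an Erd\H{o}s-Menger separation (not merely a separation).

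The main obstacle is precisely this last certification: in infinite digraphs an arbitrary separation need not be an Erd\H{o}s-Menger separation (it may not admit an orthogonal path-system), so one cannot simply take set-theoretic intersections of left sides and hope the frontier lands back in $\mathfrak{S}_D(X,Y)$. I anticipate the argument requires a careful simultaneous construction: fix an Erd\H{o}s-Menger path-system $\mathcal{P}_0$ witnessing some fixed $S_0 \in \mathcal{A}$ (or, better, witnessing a lower bound), and then "push" its paths forward so that each path stops exactly at the frontier of the intersection, arguing by a maximality/exchange argument that the resulting system covers the frontier and nothing beyond it. Handling the case of infinite $\mathcal{A}$ may need a transfinite or compactness-style argument, or a reduction: show the infimum of a two-element set exists (binary meets via uncrossing) and that the family $\{\mathrm{Reach}(S)\}$ is closed under arbitrary intersections by a limiting argument using that each $\mathrm{Reach}(S)$ is determined by a single orthogonal path-system of bounded "width" $\kappa_D(X,Y)$ when that is finite — in the genuinely infinite-width case one leans on the structure theory from~\cite{attila-lattice} and the Aharoni--Berger machinery~\cite{infinite-menger}. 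Finally, I would note that the top element of the lattice is the separation $Y$ itself (or the appropriate frontier near $Y$), which together with arbitrary infima yields completeness.
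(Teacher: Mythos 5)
There is nothing in the paper to compare against: Lemma~\ref{lem:completelattice} is not proved here, it is imported verbatim from~\cite{attila-lattice}. So your proposal has to stand on its own, and as it stands it is a programme rather than a proof, with the gap located exactly at the step that \emph{is} the theorem. Reflexivity, antisymmetry, transitivity, and the reduction ``arbitrary infima $+$ top element $\Rightarrow$ complete lattice'' are the routine parts. The substance is the certification you repeatedly defer (``I expect to use an uncrossing argument'', ``I anticipate a careful simultaneous construction'', ``may need a transfinite or compactness-style argument''): given a family $\mathcal{A} \subseteq \mathfrak{S}_D(X,Y)$, one must exhibit an $(X,Y)$-path-system orthogonal to the frontier of $\bigcap_{S \in \mathcal{A}} \mathrm{Reach}(S)$, for otherwise that frontier is merely a separation and need not lie in $\mathfrak{S}_D(X,Y)$ at all --- you state this obstacle correctly but never overcome it. Worse, for the infinite-width case you propose to ``lean on the structure theory from~\cite{attila-lattice}'', which is circular: that is precisely the paper whose theorem you are being asked to reprove.

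Two further concrete problems. First, your proposed top element is wrong: $Y$ itself is in general \emph{not} an Erd\H{o}s--Menger $(X,Y)$-separation, since orthogonality demands a path-system whose terminal vertices cover all of $Y$, which already fails when some vertex of $Y$ is unreachable from $X$; so the existence of a $\unlhd$-greatest element is itself part of what must be proved, and your reduction to ``infima plus top'' is not secured. Second, the translation of $\unlhd$ into inclusion of left sides and the closure of the family $\{\mathrm{Reach}(S)\}$ under arbitrary intersections are asserted, not argued; in the infinite setting the frontier of an intersection of left sides can contain vertices not covered by any single witnessing path-system, and this is exactly where an Aharoni--Berger/augmenting-walk construction (in the spirit of Lemma~\ref{lem:separsup} of this paper, which handles only the ``separates'' half, not orthogonality) has to be carried out. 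Until that construction is actually performed, the proposal does not establish the lemma.
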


We write~${\mathcal{G}_D(v)}$ for the set of those subsets $I$ of the ingoing edges of $v$ for which there is an internally disjoint path-system from $r$ with last edges $I$. More formally: 
\[\mathcal{G}_D(v):=\{I \subseteq \ingoing_D(v):\, \exists\,  (r,v)\text{-path-system } \mathcal{P} \text{ with } I=E^{+}(\mathcal{P}) \}.\]


In terms of this notation, we can define a \emph{flame} as a rooted digraph $F$ in which ${\ingoing_F(v) \in \mathcal{G}_F(v)}$ for every~${v \in V(F)}$.

\vspace{0.2cm}

We will need an adaption of the following theorem due to Pym. 

\begin{thm}[Pym]
    \label{thm:pym}
    \cite{pym}
    Let~$D$ be a digraph, let~${X,Y \subseteq V(D)}$, and let~$\mathcal{P}$ and~$\mathcal{Q}$ be~${(X,Y)}$-path-systems in~$D$. 
    Then there is an ${(X,Y)}$-path-system~$\mathcal{R}$ 
    for which~${V^-(\mathcal{R}) \supseteq V^-(\mathcal{P})}$ and~${V^+(\mathcal{R}) \supseteq V^+(\mathcal{Q})}$. 
\end{thm}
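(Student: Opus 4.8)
The plan is to begin with $\mathcal{Q}$ — which trivially has $V^{+}(\mathcal{Q})\supseteq V^{+}(\mathcal{Q})$ — and to modify it step by step, grafting on initial pieces of the paths of $\mathcal{P}$, so as to eventually cover $V^{-}(\mathcal{P})$ as well, while never moving the terminal vertices. First I would reduce to the case $X\cap Y=\emptyset$ (a vertex of $X\cap Y$ is a trivial one-vertex path, handled directly); then in $H:=\mathcal{P}\cup\mathcal{Q}$ no vertex of $X$ has an in-edge, no vertex of $Y$ has an out-edge, and all internal vertices of paths of $\mathcal{P}\cup\mathcal{Q}$ avoid $X\cup Y$. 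At every stage I maintain a family $\mathcal{R}$ of pairwise disjoint $(X,Y)$-paths with $E(\mathcal{R})\subseteq E(\mathcal{P})\cup E(\mathcal{Q})$ and $V^{+}(\mathcal{Q})\subseteq V^{+}(\mathcal{R})$, and with the structural invariant that every $R\in\mathcal{R}$ factors as $R=\alpha_{R}\beta_{R}$, where $\alpha_{R}$ is an initial segment of a path of $\mathcal{P}$ (possibly trivial) and $\beta_{R}$ a final segment of a path of $\mathcal{Q}$ (possibly trivial), meeting only in their common end. The starting family $\mathcal{R}:=\mathcal{Q}$ satisfies all of this.

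The basic move: while some $x\in V^{-}(\mathcal{P})$ is not covered by $\mathcal{R}$ — note then $x\notin V(\mathcal{R})$, since $X$-vertices occur only as first vertices of $\mathcal{R}$-paths — follow $P_{x}\in\mathcal{P}$ forward from $x$. If $P_{x}$ misses $V(\mathcal{R})$, add $P_{x}$ to $\mathcal{R}$. Otherwise let $v$ be the first vertex of $P_{x}$ that lies on some (necessarily unique) $R_{1}\in\mathcal{R}$, and replace $R_{1}$ by the concatenation of $P_{x}[x,v]$ with the final segment of $R_{1}$ from $v$ onward. I would verify this is again a valid $(X,Y)$-path, disjoint from the other members of $\mathcal{R}$, with the same terminal vertex as $R_{1}$, and preserving the invariant; thus $V^{+}(\mathcal{R})$ does not change, $x$ becomes covered, and at worst the old first vertex of $R_{1}$ is uncovered.

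The crux is a structural claim underpinning the move: the hit vertex $v$ always lies on the $\mathcal{Q}$-segment $\beta_{R_{1}}$, strictly beyond its start. Indeed $v$ cannot lie on $\alpha_{R_{1}}$, for that would make $\alpha_{R_{1}}$ an initial segment of some $P'\in\mathcal{P}$ starting at the first vertex of $R_{1}$ (which is covered, hence $\neq x$), so $P'\neq P_{x}$ and then $v\in V(P')\cap V(P_{x})$ contradicts that $\mathcal{P}$ is a family of pairwise disjoint paths; and once $v$ lies on $\beta_{R_{1}}$ but off $\alpha_{R_{1}}$ it is past their common end. Consequently the move replaces $\beta_{R_{1}}$ by a strictly shorter final segment of itself. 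Taking the potential $\Phi(\mathcal{R}):=\sum_{R\in\mathcal{R}}|E(\beta_{R})|$, which is unchanged by the \emph{add} moves and strictly decreased by the grafting moves, and noting $0\le\Phi\le|E(\mathcal{Q})|$ while between grafting moves there are only boundedly many \emph{add} moves (each strictly enlarging the bounded set $E(\mathcal{R})\cap E(\mathcal{P})$), the process terminates when $D$ is finite; at termination $V^{-}(\mathcal{P})\subseteq V^{-}(\mathcal{R})$, as desired.

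The main obstacle is the step to arbitrary $D$. Helpfully, grafting moves only re-use $\mathcal{Q}$-edges already present and introduce no new ones, while \emph{add} moves introduce only $\mathcal{P}$-edges, so $E(\mathcal{R})\cap E(\mathcal{Q})$ is nonincreasing throughout; I would therefore run the construction by transfinite recursion along a well-ordering of $V^{-}(\mathcal{P})$, argue that each $\mathcal{R}$-path and each edge of $H$ is altered only finitely often and hence that a natural stabilised limit family exists, and check that the three invariants are closed under such limits. Carrying out this bookkeeping carefully — or, alternatively, recasting the argument as an application of Zorn's lemma to a suitable poset of partial $(X,Y)$-path-systems — is where I expect the real effort to be needed.
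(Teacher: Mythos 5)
The paper itself does not prove this statement, it quotes it from Pym's 1969 paper, so your argument has to stand on its own. For \emph{finite} digraphs it does: starting from $\mathcal{R}:=\mathcal{Q}$ and grafting initial segments of $\mathcal{P}$-paths is sound, your crux claim that the first meeting vertex $v$ lies strictly inside the $\mathcal{Q}$-segment $\beta_{R_1}$ is correct (with the small repair that when $\alpha_{R_1}$ is trivial it need not be a segment of any $\mathcal{P}$-path — the first vertex of a $\mathcal{Q}$-path may avoid $\mathcal{P}$ entirely — but then one argues directly that $v\neq x$ forces $v\notin X$), and your potential $\Phi$ terminates the process.

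The genuine gap is the infinite case, which is the entire content of Pym's theorem and the case this paper actually needs (the Erd\H{o}s--Menger path-systems fed into Corollary~2.6 and Lemma~2.7 are typically infinite). Your closing plan does not yet contain the key idea. It is true, as you say, that each $\mathcal{R}$-path is altered only finitely often (its $\beta$-segment is a finite path that strictly shortens at every graft), so a stabilised limit family exists and inherits disjointness and $V^{+}(\mathcal{R})\supseteq V^{+}(\mathcal{Q})$. But the property that actually matters — that every $x\in V^{-}(\mathcal{P})$ is \emph{eventually permanently} covered — is not among your invariants and does not follow from stabilisation: coverage is non-monotone, since grafting to cover $x$ evicts the old first vertex of $R_1$, which may itself lie in $V^{-}(\mathcal{P})$, and a priori a fixed vertex could be evicted again and again via paths with different terminal vertices, in which case the limit family simply misses it. Transfinite recursion along a well-order of $V^{-}(\mathcal{P})$ does not help as stated (later steps undo earlier ones), and the Zorn alternative is not a plan until you specify the poset and show a maximal partial system covers $V^{-}(\mathcal{P})$ — that exchange argument \emph{is} the theorem. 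What is missing is something like: no $\alpha$-segment of a path not starting at $x$ can meet $P_x$ (disjointness of $\mathcal{P}$), the $\beta$-segments are final segments of pairwise disjoint $\mathcal{Q}$-paths and only ever shrink, hence the set of vertices of $P_x$ lying in $V(\mathcal{R})$ at moments when $x$ is uncovered is non-increasing and loses the old grafting point each time $x$ is evicted; so the grafting point of $P_x$ advances strictly along the finite path $P_x$, each $x$ is evicted only finitely often, and a fair ($\omega$- or transfinite) schedule then yields a correct limit. Without an argument of this kind, your proposal establishes only the finite case, i.e.\ it stops short of what is being cited.
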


\begin{corollary}
    \label{cor:pym}
    Let~$D$ be a digraph, let~${X \subseteq V(D)}$, $y \in V(D) \setminus X$, and let~$\mathcal{P}$ and~$\mathcal{Q}$ be~${(X, y)}$-path-systems in~$D$. 
    Then there is an ${(X, y)}$-path-system~$\mathcal{R}$ of 
    with the properties~${V^-(\mathcal{R}) \supseteq V^-(\mathcal{P})}$ and~${E^+(\mathcal{R}) \supseteq E^+(\mathcal{Q})}$. 
\end{corollary}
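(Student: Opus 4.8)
The plan is to deduce the corollary from Theorem~\ref{thm:pym} by a vertex-splitting argument that replaces the common terminal vertex~$y$ by a family of terminal vertices, one for each ingoing edge of~$y$. Concretely, I would form an auxiliary digraph~$D'$ from~$D$ by deleting~$y$ together with all edges incident to it, adding a new vertex~$y_e$ for every~${e \in \ingoing_D(y)}$, and adding for each such~${e = uy}$ the edge~$uy_e$; set~${Y' := \{ y_e : e \in \ingoing_D(y) \}}$. (Since~$D$ has no parallel edges, distinct ingoing edges of~$y$ have distinct tails, so~$D'$ is again a simple digraph, though this plays no role.)

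The key point is a bijective correspondence between~${(X,y)}$-path-systems in~$D$ and~${(X,Y')}$-path-systems in~$D'$: given an~${(X,y)}$-path~$P$ with last edge~${e = uy}$, replace that edge by~$uy_e$ to obtain an~${(X,Y')}$-path~$\hat{P}$ terminating in~$y_e$; conversely, an~${(X,Y')}$-path terminating in some~$y_e$ has~$u$ as its penultimate vertex (the edge~$uy_e$ being the only edge of~$D'$ into~$y_e$), and replacing~$y_e$ by~$y$ and the last edge by~$e$ returns an~${(X,y)}$-path. Here one uses that in an~${(X,y)}$-path-system the paths have pairwise distinct terminal edges --- two paths with a common terminal edge~$uy$ would share the vertex~${u \neq y}$ --- so the images terminate in distinct vertices of~$Y'$ and are pairwise disjoint; internal disjointness from~$X$, respectively from~${X \cup Y'}$, is preserved since only the final vertex and edge are altered, and trivial paths, i.e.\ single edges from~$X$ to~$y$, cause no issue. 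Under this correspondence~$V^-$ is unchanged, while the terminal-edge set~$E^+(\mathcal{P})$ of a path-system~$\mathcal{P}$ corresponds to the terminal-vertex set~${V^+(\hat{\mathcal{P}}) = \{ y_e : e \in E^+(\mathcal{P}) \}}$.

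It then remains to apply the known theorem. Let~$\hat{\mathcal{P}}$ and~$\hat{\mathcal{Q}}$ be the~${(X,Y')}$-path-systems in~$D'$ corresponding to~$\mathcal{P}$ and~$\mathcal{Q}$. Theorem~\ref{thm:pym} applied in~$D'$ with~${Y := Y'}$ produces an~${(X,Y')}$-path-system~$\hat{\mathcal{R}}$ with~${V^-(\hat{\mathcal{R}}) \supseteq V^-(\hat{\mathcal{P}})}$ and~${V^+(\hat{\mathcal{R}}) \supseteq V^+(\hat{\mathcal{Q}})}$. Letting~$\mathcal{R}$ be the~${(X,y)}$-path-system in~$D$ corresponding to~$\hat{\mathcal{R}}$, we obtain~${V^-(\mathcal{R}) = V^-(\hat{\mathcal{R}}) \supseteq V^-(\hat{\mathcal{P}}) = V^-(\mathcal{P})}$ and~${E^+(\mathcal{R}) = \{ e : y_e \in V^+(\hat{\mathcal{R}}) \} \supseteq \{ e : y_e \in V^+(\hat{\mathcal{Q}}) \} = E^+(\mathcal{Q})}$, as desired.

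There is no real difficulty here: the statement is a routine reduction to Pym's theorem. The only part that needs a little care is verifying that the correspondence above is well-defined in both directions and respects disjointness --- in particular the observation that distinct members of an~${(X,y)}$-path-system automatically have distinct terminal edges --- so that the data transfers cleanly between~$D$ and~$D'$.
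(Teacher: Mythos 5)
Your proposal is correct and is essentially the paper's own argument: the paper subdivides the ingoing edges of~$y$ and takes the subdivision vertices as the target set~$Y$ for Theorem~\ref{thm:pym}, which is the same reduction as your splitting of~$y$ into the vertices~$y_e$, with terminal edges at~$y$ corresponding to terminal vertices in the auxiliary digraph. Your write-up just spells out the routine verification of the correspondence that the paper leaves implicit.
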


\begin{proof}
    After subdividing the ingoing edges of~$y$, calling the set of new vertices~$Y$, we can apply Theorem~\ref{thm:pym} to obtain the ${(X,Y)}$-path-system, which can be similarly translated back to an ${(X,y)}$-path-system with the desired properties. 
\end{proof}

The following Lemma is a corollary of Pym's Theorem, and shows that if~${I \in \mathcal{G}_{D-rv}(v)}$, then the witness for it can be chosen from~${\mathfrak{P}_{D}(v)}$.

\begin{lem}
    \label{lem:covering-independent0}
    Let~$D$ be a rooted digraph and let~${v \in V(D)-r}$. 
    For every~${I \in \mathcal{G}_{D-rv}(v)}$ 
    and~${S \in \mathfrak{S}_{D}(v)}$ 
    there is an~${\mathcal{R}\in \mathfrak{P}_{D}(v)}$ orthogonal to~$S$ 
    with~${I \subseteq E^+(\mathcal{R})}$.
\end{lem}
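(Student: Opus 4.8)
The plan is to combine the two path-systems witnessing the hypotheses using Corollary~\ref{cor:pym} applied in a suitable auxiliary digraph, and then to ``trim'' the resulting system to make it orthogonal to~$S$. First I would unpack the hypotheses: since~$I \in \mathcal{G}_{D-rv}(v)$, there is an ${(r,v)}$-path-system~$\mathcal{P}$ in~${D-rv}$ with~${E^+(\mathcal{P}) = I}$; and since~$S \in \mathfrak{S}_D(v)$, there is, by definition of~$\mathfrak{S}_{D-rv}(r,v)$, an ${(r,v)}$-path-system~$\mathcal{Q}$ in~${D-rv}$ with~${\mathcal{Q} \orthogonal S}$. Both~$\mathcal{P}$ and~$\mathcal{Q}$ live in~${D-rv}$, and I want a single path-system~$\mathcal{R}$ there that simultaneously keeps the terminal edges of~$\mathcal{P}$ (so that~${I \subseteq E^+(\mathcal{R})}$) and is orthogonal to~$S$.

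The key step is to run Corollary~\ref{cor:pym} in the right place. Consider the digraph~${D' := (D - rv)[\{r\} \cup R_S]}$ where~$R_S$ denotes the set of vertices reachable from~$r$ without meeting~$S$, together with~$S$ itself; more carefully, let~$\mathcal{Q}_S$ be the family of initial segments of the paths in~$\mathcal{Q}$ up to their (unique) vertex in~$S$, so that~$\mathcal{Q}_S$ is an ${(r,S)}$-path-system covering~$S$. Truncate each path of~$\mathcal{P}$ at its first vertex on~$S$ (such a vertex exists because~$S$ separates~$r$ from~$v$ in~${D-rv}$ and every path of~$\mathcal{P}$ goes from~$r$ to~$v$), obtaining an ${(r,S)}$-path-system~$\mathcal{P}_S$ — here I should be slightly careful: the truncated paths are pairwise disjoint except at~$r$, but a priori two of them might end at the same vertex of~$S$; however, since~$S$ is an Erd\H{o}s--Menger separation there is a path-system orthogonal to it, and by Lemma~\ref{lem:covering-independent0}'s precursors (or directly by Menger-type linkability) $|S| = |\mathcal{Q}|$, so after possibly re-indexing I can assume~$\mathcal{P}_S$ and~$\mathcal{Q}_S$ are both ${(r,S)}$-path-systems hitting~$S$ in distinct vertices. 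Now I apply Corollary~\ref{cor:pym} with~$X = \{r\}$ playing no role (or rather, I apply the version of Pym's theorem, Theorem~\ref{thm:pym}, with one side being~$\{r\}$ after subdividing, and the other side~$S$) to~$\mathcal{P}_S$ and~$\mathcal{Q}_S$: this yields an ${(r,S)}$-path-system~$\mathcal{R}_S$ in~${D-rv}$ with~${V^+(\mathcal{R}_S) \supseteq V^+(\mathcal{Q}_S) = S}$, so~$\mathcal{R}_S$ covers all of~$S$ and hence, being also disjoint (but for~$r$) with exactly one endpoint in~$S$ per path, is orthogonal to~$S$. I would also want to carry the terminal edges of~$\mathcal{P}$ through; this is where I instead apply Pym on the ``$v$-side'': take the segments of~$\mathcal{P}$ from~$S$ to~$v$ and the segments of~$\mathcal{Q}$ from~$S$ to~$v$, form ${(S,v)}$-path-systems, and use Corollary~\ref{cor:pym} to get an ${(S,v)}$-path-system whose initial vertices include all of~$S$ and whose terminal edges include~$E^+(\mathcal{P}) \cap \ingoing(v) = I$ restricted appropriately — actually, more cleanly, I keep the original second halves of~$\mathcal{P}$ (from its $S$-vertex to~$v$), which already have terminal edges exactly~$I$, and concatenate them with first halves chosen so that the concatenation is disjoint.

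The honest way to package all of this, and what I expect to be the main obstacle, is the bookkeeping of concatenating a good ``first half'' (from~$r$ to~$S$, orthogonal to~$S$) with a good ``second half'' (from~$S$ to~$v$, with terminal edges~$I$) so that the concatenated paths are genuinely internally disjoint and form an ${(r,v)}$-path-system: the two halves a priori only match up vertex-by-vertex at~$S$ if their~$S$-vertices are paired correctly, and one must also ensure the first halves avoid the interiors of the second halves and vice versa. The standard trick is to work in the contracted/split digraph where~$S$ is a ``bottleneck'': split~$D-rv$ at~$S$ into the part~$D_1$ between~$r$ and~$S$ and the part~$D_2$ between~$S$ and~$v$ (paths cannot cross~$S$ twice once we truncate, since~$S$ separates), run Pym/Corollary~\ref{cor:pym} independently in~$D_1$ (to get orthogonality to~$S$, feeding it~$\mathcal{P} \!\restriction_{D_1}$ and~$\mathcal{Q}\!\restriction_{D_1}$ and asking for~$V^+ \supseteq S$) and in~$D_2$ (to keep the terminal edges~$I$, feeding it the two restrictions and asking for~$V^- \supseteq S$ and~$E^+ \supseteq I$), and then glue along~$S$: since each half-system is disjoint, hits every vertex of~$S$ exactly once, and lives in a part of the digraph that the other half only touches at~$S$, the concatenation is an ${(r,v)}$-path-system~$\mathcal{R}$ with~$\mathcal{R} \orthogonal S$ and~${I \subseteq E^+(\mathcal{R})}$, so~$\mathcal{R} \in \mathfrak{P}_D(v)$ as required. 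The only subtlety to double-check is that~$\mathcal{P}$'s paths really do cross~$S$ exactly once (they might not, if~$S$ is not ``tight'' with respect to~$\mathcal{P}$), but truncating at the \emph{first} crossing and noting that the tail-segment from that first $S$-vertex to~$v$ may be replaced using Corollary~\ref{cor:pym} in~$D_2$ — which is allowed to reroute freely as long as it preserves~$E^+ \supseteq I$ — resolves this.
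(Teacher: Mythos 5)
Your proposal is correct in substance and follows essentially the same route as the paper's proof: keep the $r$-to-$S$ initial segments of the path-system orthogonal to~$S$, apply Corollary~\ref{cor:pym} on the $(S,v)$-side to combine coverage of all of~$S$ with the terminal edges~$I$, and glue along~$S$ (the two sides can only meet in~$S$, since a common vertex elsewhere would yield an $(r,v)$-path in~$D-rv$ avoiding the separation~$S$), so your extra application of Pym on the $r$-side is superfluous. The one wrinkle you flag at the end is handled more cleanly as in the paper: take the terminal segments of the $I$-witnessing paths from their \emph{last} common vertex with~$S$, so that they are genuine $(S,v)$-paths and hence legitimate input to Corollary~\ref{cor:pym}, rather than truncating at the first crossing and hoping the corollary ``reroutes'' a segment that need not even lie on the $v$-side of~$S$.
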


\begin{proof}
    By the definition of~${\mathfrak{S}_{D}(v)}$ there is a ${\mathcal{P} \in \mathfrak{P}_{D}(v)}$ orthogonal to~$S$. 
    Let~$\mathcal{Q}$ be an $(r,v)$-path-system witnessing~${I \in \mathcal{G}_{D}(v)}$. 
    Let~$\mathcal{P}'$ and~$\mathcal{Q}'$ consist of the terminal segments of the paths~$\mathcal{P}$ and~$\mathcal{Q}$ from the last common vertex with~$S$ respectively. 
    Applying Corollary~\ref{cor:pym} with~$\mathcal{P}'$ and~$\mathcal{Q}'$ results in an $(S,v)$-path-system~$\mathcal{R}'$ with ${V^-(\mathcal{R}') = S}$ and~${I \subseteq E^+(\mathcal{R}')}$. 
    The concatenation of the initial segments of the paths~$\mathcal{P}$ until~$S$ with the paths~$\mathcal{R}'$ is a desired~$\mathcal{R}$. 
\end{proof}

\vspace{0.2cm}

Lastly, we will use the standard tool of the augmenting walk method. 

\begin{lem}\label{lem:augmentingwalk}
    Let $\mathcal{P}$ be an $(X,Y)$-path-system in~$D$. 
    Then  
    \begin{itemize}
        \item either there is an~${x \in X \setminus V^{-}(\mathcal{P})}$, a ${y \in Y \setminus V^{+}(\mathcal{P})}$ and an ${(X,Y)}$-path-system~$\mathcal{P}'$ in~$D$ such that:
        \begin{itemize}
            \item ${V^{-}(\mathcal{P}') = V^{-}(\mathcal{P}) \cup \{x\}}$;
            \item ${V^{+}(\mathcal{P}') = V^{+}(\mathcal{P}) \cup \{y\}}$;
            \item $|E(\mathcal{P}) \triangle E(\mathcal{P}')| < \infty$;
        \end{itemize}
        \item or there is an $(X,Y)$-separation~$S$ such that~${\mathcal{P} \orthogonal S}$.
    \end{itemize}
\end{lem}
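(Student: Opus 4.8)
The plan is to run the standard augmenting-walk (alternating-path) argument; the only genuinely infinite point will be the observation that an augmenting walk, if one exists, can be chosen to be finite, and this is exactly what produces the finiteness of the symmetric difference. Concretely, I would introduce the auxiliary digraph~$H$ on~${V(D)}$ obtained from~$D$ by reversing every edge lying on some path of~$\mathcal{P}$ and keeping every other edge unchanged. Call a directed walk in~$H$ a \emph{$\mathcal{P}$-alternating walk} if it begins at a vertex of~${X\setminus V^{-}(\mathcal{P})}$ and does not pass through a vertex of~$X$ after its first step, and call such a walk \emph{augmenting} if it additionally ends at a vertex of~${Y\setminus V^{+}(\mathcal{P})}$. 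The bookkeeping around~$X$ and~$Y$ is there so that the paths produced at the end are internally disjoint from~${X\cup Y}$ as required of an ${(X,Y)}$-path-system: a vertex of~$Y$ may be traversed, but then the only admissible continuation of the walk is backwards along the corresponding path of~$\mathcal{P}$, and a vertex of~${Y\setminus V^{+}(\mathcal{P})}$ is necessarily a dead end. The dichotomy of the lemma is obtained by asking whether~$H$ admits an augmenting walk.

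Suppose first that some augmenting walk exists. Then there is one with finitely many edges, since any walk in~$H$ between two vertices contains an~$H$-path between them, and an~$H$-path starting in~${X\setminus V^{-}(\mathcal{P})}$ and ending in~${Y\setminus V^{+}(\mathcal{P})}$ is again an augmenting walk. Fixing such a finite augmenting walk~$W$ from~${x\in X\setminus V^{-}(\mathcal{P})}$ to~${y\in Y\setminus V^{+}(\mathcal{P})}$, I would let~$\mathcal{P}'$ be obtained by forming, inside~$D$, the symmetric difference of~${E(\mathcal{P})}$ with the set of $D$-edges underlying the arcs of~$W$, where a forward arc of~$W$ contributes its edge and a reversed arc deletes the corresponding edge of~$\mathcal{P}$. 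Verifying that this edge set decomposes into an ${(X,Y)}$-path-system with~${V^{-}(\mathcal{P}')=V^{-}(\mathcal{P})\cup\{x\}}$ and~${V^{+}(\mathcal{P}')=V^{+}(\mathcal{P})\cup\{y\}}$ is exactly the finite augmenting-path computation: in- and out-degrees are preserved at every vertex except that~$x$ gains an out-edge and~$y$ gains an in-edge, any incidental directed cycles in the difference are discarded, and~${|E(\mathcal{P})\triangle E(\mathcal{P}')|<\infty}$ simply because~$W$ is finite.

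Suppose instead that no augmenting walk exists, and let~$R$ be the set of all vertices reached by $\mathcal{P}$-alternating walks from~${X\setminus V^{-}(\mathcal{P})}$ (including the degenerate ones). By assumption~$R$ is disjoint from~${Y\setminus V^{+}(\mathcal{P})}$. Because the arcs of~$H$ running along a path~${P\in\mathcal{P}}$ all point towards its initial vertex, the vertices of~$P$ lying in~$R$ form the vertex set of an initial segment of~$P$ with the initial vertex of~$P$ removed; hence~$P$ has a well-defined last vertex~$s_P$ in~$R$, where we take~$s_P$ to be the initial vertex of~$P$ if no later vertex of~$P$ is reached. Put~${S:=\{\,s_P \colon P\in\mathcal{P}\,\}}$. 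Since the paths of~$\mathcal{P}$ are pairwise disjoint, ${S\subseteq V(\mathcal{P})}$ and~${|V(P)\cap S|=1}$ for every~${P\in\mathcal{P}}$, so~${\mathcal{P}\orthogonal S}$. Finally,~$S$ is an ${(X,Y)}$-separation: an ${(X,Y)}$-path avoiding~$S$ could be spliced together with backwards segments of the paths of~$\mathcal{P}$ — switching onto a path~$P$ whenever it is met strictly below~$s_P$ — into a $\mathcal{P}$-alternating walk ending in~${Y\setminus V^{+}(\mathcal{P})}$, contradicting the assumption.

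The step I expect to be the main obstacle is this second branch: pinning down the definition of~$s_P$ and~$S$ so that~$S$ meets each path of~$\mathcal{P}$ in exactly one vertex while still separating~$X$ from~$Y$, and in particular carrying out carefully the (case-heavy, but essentially routine) argument that an ${(X,Y)}$-path missing~$S$ assembles with backwards segments of~$\mathcal{P}$ into a genuine augmenting walk. By contrast, the distinctively infinite ingredient — that the augmenting walk, and hence the symmetric difference, is finite — comes for free the moment one replaces a walk in~$H$ by a path in~$H$.
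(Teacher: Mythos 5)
The paper does not actually prove this lemma; it refers to Diestel (Lemmas~3.3.2, 3.3.3 and Theorem~8.4.2), so your attempt has to be judged on its own. Your overall strategy is the right one, but the definition of the auxiliary digraph~$H$ and of a $\mathcal{P}$-alternating walk is too permissive, and this is a genuine gap rather than a presentational one. Reversing the edges of~$E(\mathcal{P})$ encodes \emph{edge}-disjoint augmentation only; it does not record that the paths of an $(X,Y)$-path-system must be pairwise \emph{vertex}-disjoint. Nothing in your definition prevents a walk from passing through an internal vertex of some ${P \in \mathcal{P}}$ entering and leaving by two edges not on~$P$, and then the symmetric-difference step fails. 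Concretely, take ${X = \{x_1,x_2\}}$, ${Y = \{y_1,y_2\}}$, edges ${x_1v, x_2v, vy_1, vy_2}$, and ${\mathcal{P} = \{x_1vy_1\}}$. The correct conclusion is the second branch with ${S = \{v\}}$, but in your~$H$ the walk ${x_2 \to v \to y_2}$ starts in ${X \setminus V^{-}(\mathcal{P})}$, ends in ${Y \setminus V^{+}(\mathcal{P})}$, never re-enters~$X$ and never needs to leave a covered $Y$-vertex, so by your definition it is augmenting; yet the symmetric difference ${\{x_1v, vy_1, x_2v, vy_2\}}$ is not a disjoint path-system (both paths would have to use~$v$), and indeed no path-system with ${V^-=\{x_1,x_2\}}$ exists at all. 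This also shows your degree-counting claim (``in- and out-degrees are preserved except at~$x$ and~$y$'') is false at path vertices crossed by non-path edges, and it contaminates the second branch as well, since the reachability set~$R$ is computed in the same flawed digraph. A smaller point: your remark that a vertex of ${Y\setminus V^{+}(\mathcal{P})}$ is a dead end, and that a covered $Y$-vertex can only be left backwards along its path, silently assumes $Y$-vertices have no outgoing edges, which is an assumption made only later in the paper (Section~4), not in the lemma as stated.

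The fix is standard and would make the rest of your argument go through essentially verbatim: either impose Diestel's alternating-walk conditions explicitly (edges of~$E(\mathcal{P})$ are only traversed backwards, repeated vertices of the walk lie in~$V(\mathcal{P})$, and \emph{every} visit to a vertex of~$V(\mathcal{P})$ uses an edge of~$E(\mathcal{P})$ --- this last clause is exactly what rules out the counterexample), or first split every vertex~$v$ into ${v_{\mathrm{in}}v_{\mathrm{out}}}$ so that internal disjointness becomes edge-disjointness, and only then reverse the (split) paths of~$\mathcal{P}$; in the split digraph your ``walk contains a path, hence a finite augmenting walk'' observation and the symmetric-difference computation are correct, and the separator construction via the last reachable vertex~$s_P$ on each path (taking the initial vertex for untouched paths) is the same one used in the cited source.
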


For more details about the augmenting walk lemma and its role in the proof of the Aharoni-Berger theorem we refer to Lemmas~3.3.2 and~3.3.3 as well as Theorem~8.4.2 of~\cite{diestel-book}.

\subsection{Largeness}
\label{subsec:largeness}
\ 

Let~$D$ be a rooted digraph and let~${v \in V(D)-r}$.
A spanning subgraph ${L \subseteq D}$ is
\emph{$v$-large} with respect to $D$ if some ${\mathcal{P} \in \mathfrak{P}_{D}(v)}$ lies in $L$ and~$L$ contains~$rv$ if~$D$ does. 
Then,~$L$ is called \emph{$D$-large} if~$L$ is $v$-large for all~${v \in V(D)-r}$. 
Note that a $D$-large subgraph in particular contains~$\outgoing_D(r)$. 
When the ambient digraph is clear from the context we will simply call~$L$ \emph{large}. 

We obtain the following corollary from Lemma~\ref{lem:covering-independent0}. 

\begin{corollary}
    \label{cor:covering-independent}
    Let~$D$ be a rooted digraph, let~${v \in V(D)-r}$ 
    and let~${L \subseteq D}$ be $v$-large. 
    Then for every~${I \in \mathcal{G}_L(v)}$ there is an $(r,v)$-path-system~$\mathcal{P}$ 
    with~${(\mathcal{P}-\{rv\}) \in \mathfrak{P}_{D}(v) \cap \mathfrak{P}_{L}(v)}$ and~${I \subseteq E^+(\mathcal{P})}$.
\end{corollary}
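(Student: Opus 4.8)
\textit{Plan.} The idea is to apply Lemma~\ref{lem:covering-independent0} not to~$D$ but to the rooted digraph~$L$ itself, fed with a well-chosen Erd\H{o}s--Menger separation, and then to do some bookkeeping around the edge~$rv$. Since~$L$ is $v$-large, fix~${\mathcal{P}_0 \in \mathfrak{P}_D(v)}$ with~${\mathcal{P}_0 \subseteq L}$, and fix an ${(r,v)}$-separation~$S$ in~${D-rv}$ with~${\mathcal{P}_0 \orthogonal S}$, so that~${S \in \mathfrak{S}_D(v)}$. I would first observe that~$S$ is also Erd\H{o}s--Menger for~$v$ inside~$L$: as~${L-rv \subseteq D-rv}$, the set~$S$ still separates~$r$ from~$v$ in~${L-rv}$, and it is witnessed there by~$\mathcal{P}_0$, which lies in~${L-rv}$ because~${\mathcal{P}_0 \subseteq L}$ and no path of~$\mathcal{P}_0$ uses the edge~$rv$. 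Hence~${S \in \mathfrak{S}_L(v)}$.

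Next I would pass from~$I$ to an independent set of ingoing edges of~$v$ living in~${L-rv}$. Given~${I \in \mathcal{G}_L(v)}$, let~$\mathcal{Q}$ be a witnessing ${(r,v)}$-path-system in~$L$ and set~${I' := I \setminus \{rv\}}$. The trivial ${(r,v)}$-path is the only ${(r,v)}$-path using the edge~$rv$, so deleting the trivial path from~$\mathcal{Q}$ (if it occurs there) leaves an ${(r,v)}$-path-system contained in~${L-rv}$ whose set of terminal edges is exactly~$I'$; thus~${I' \in \mathcal{G}_{L-rv}(v)}$.

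Now I would apply Lemma~\ref{lem:covering-independent0} with~$L$ in the role of~$D$, to~${I' \in \mathcal{G}_{L-rv}(v)}$ and~${S \in \mathfrak{S}_L(v)}$, obtaining~${\mathcal{R} \in \mathfrak{P}_L(v)}$ orthogonal to~$S$ with~${I' \subseteq E^+(\mathcal{R})}$. Since the paths of~$\mathcal{R}$ lie in~${L-rv \subseteq D-rv}$, are orthogonal to~$S$, and~$S$ separates~$r$ from~$v$ in~${D-rv}$, we also get~${\mathcal{R} \in \mathfrak{P}_D(v)}$, hence~${\mathcal{R} \in \mathfrak{P}_D(v) \cap \mathfrak{P}_L(v)}$. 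Finally I would take~${\mathcal{P} := \mathcal{R}}$ if~${rv \notin I}$, and~${\mathcal{P} := \mathcal{R} \cup \{rv\}}$ (adjoining the trivial ${(r,v)}$-path, which is internally disjoint from every path of~$\mathcal{R}$) if~${rv \in I}$; in either case~$\mathcal{P}$ is an ${(r,v)}$-path-system with~${\mathcal{P} - \{rv\} = \mathcal{R}}$ and~${I \subseteq E^+(\mathcal{P})}$, as required.

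I do not expect a genuine obstacle: the statement is essentially a repackaging of Lemma~\ref{lem:covering-independent0}. The only points needing a little care are the two verifications involving the edge~$rv$ — that shrinking the ambient digraph from~$D$ to~$L$ keeps the chosen separation~$S$ Erd\H{o}s--Menger, and that passing to~${L-rv}$ loses nothing — both of which reduce to the fact that the trivial path is the unique ${(r,v)}$-path through~$rv$.
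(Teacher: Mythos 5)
Your proposal is correct and follows essentially the same route as the paper: the paper's proof likewise notes that $v$-largeness yields an ${S \in \mathfrak{S}_{D}(v)\cap \mathfrak{S}_{L}(v)}$, applies Lemma~\ref{lem:covering-independent0} inside~$L$ to~${I-rv}$ and~$S$, and adjoins the trivial path~$rv$ if~${rv \in I}$. Your write-up merely makes explicit the small verifications (that the chosen~$S$ is Erd\H{o}s--Menger in~$L$ and that orthogonality to~$S$ gives membership in~$\mathfrak{P}_D(v)$) which the paper leaves implicit.
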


\begin{proof}
    By the definition of $v$-largeness, 
    there exists some~${S \in \mathfrak{S}_{D}(v)\cap \mathfrak{S}_{L}(v)}$. 
    We apply Lemma~\ref{lem:covering-independent0} with~$L$,~$v$,~${I-rv}$ and~$S$ 
    and extend the resulting path-system with the trivial path~${\{ rv \}}$ if~${rv \in I}$. 
\end{proof}

We will also need the following results about largeness from~\cite{attila-flames}. 
The first one gives us a condition for when largeness with respect to a subgraph~$G$ of~$D$ implies largeness with respect to~$D$.

\begin{lem}[\cite{attila-flames}*{Lemma~3.13}]
    \label{lem:superlarge}
    Let~$D$ be a rooted digraph and let~${G \subseteq D}$. 
    If for every~${uv \in E(D) \setminus E(G)}$ there is an~${I \in \mathcal{G}_G(v)}$ such that~${(I+uv) \notin \mathcal{G}_{G+uv}(v)}$, 
    then whenever~$H$ is $G$-large it is $D$-large as well.
\end{lem}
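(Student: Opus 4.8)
The plan is to fix an edge~${uv \in E(D) \setminus E(G)}$ together with the witnessing~${I \in \mathcal{G}_G(v)}$ satisfying~${(I + uv) \notin \mathcal{G}_{G+uv}(v)}$, and to show that $H$-largeness already gives us the Erdős–Menger data needed for $D$-largeness at~$v$. Since $H$ is $G$-large, for every vertex~${w \in V(D) - r}$ (including~$v$) there is an Erdős–Menger ${(r,w)}$-path-system~${\mathcal{P}_w \in \mathfrak{P}_{G}(w) \cap \mathfrak{P}_{H}(w)}$ and a separation~${S_w \in \mathfrak{S}_{G}(w) \cap \mathfrak{S}_{H}(w)}$ with~${\mathcal{P}_w \orthogonal S_w}$, and moreover~${rw \in E(H)}$ whenever~${rw \in E(G)}$. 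The only way this can fail to be an Erdős–Menger pair in~$D$ (rather than in~$G$) is if some new edge of~${E(D) \setminus E(G)}$ provides an ${(r,w)}$-path in~${D - rw}$ avoiding~$S_w$. So the crux is to argue, edge by edge over~${E(D) \setminus E(G)}$, that adding such an edge does not create any new ${(r,w)}$-path avoiding the chosen separations.

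The key step is the following observation about the hypothesis. Suppose for contradiction that for some~${w \in V(D) - r}$ the separation~$S_w$ fails to be an ${(r,w)}$-separation in~${D - rw}$; then there is an ${(r,w)}$-path~$P$ in~${D - rw}$ internally disjoint from~$S_w$. Walking along~$P$ from~$r$, let~$uv$ be the last edge of~$P$ lying in~${E(D) \setminus E(G)}$ (there is at least one such edge, since $S_w$ separates in~${G - rw}$). Then the terminal segment of~$P$ from~$v$ onwards lies in~$G$ and is internally disjoint from~$S_w$; combined with the initial segment of~$P$ from~$r$ to~$u$ — whose edges before~$uv$ are handled by induction on the number of new edges used — one obtains, after cleaning up to make the paths internally disjoint, an ${(r,v)}$-path-system in~${G + uv}$ whose terminal edges include~$uv$ together with a set from~${\mathcal{G}_G(v)}$ that is disjoint from~$uv$. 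Using Lemma~\ref{lem:covering-independent0} (or Corollary~\ref{cor:pym}) to merge this with the path-system witnessing~${I \in \mathcal{G}_G(v)}$ along the separation, one upgrades this to a witness that~${(I + uv) \in \mathcal{G}_{G+uv}(v)}$, contradicting the hypothesis. Thus every~$S_w$ remains an ${(r,w)}$-separation in~${D - rw}$, and since it is still orthogonal to~${\mathcal{P}_w}$, we get~${S_w \in \mathfrak{S}_{D}(w)}$ and~${\mathcal{P}_w \in \mathfrak{P}_{D}(w) \cap \mathfrak{P}_{H}(w)}$; together with the edge condition at~$r$, this is exactly $v$-largeness of~$H$ with respect to~$D$ for every~$v$, i.e., $D$-largeness.

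The main obstacle I anticipate is the bookkeeping in the inductive cleanup: a single offending ${(r,w)}$-path~$P$ may use several edges from~${E(D) \setminus E(G)}$, and one must peel them off one at a time while maintaining internal disjointness and control over which terminal edge ends up at~$v$. The right way to organise this is probably to induct on~${|E(P) \cap (E(D) \setminus E(G))|}$: if this number is~$1$ the argument above applies directly with~${uv}$ being that unique edge; if it is larger, apply the hypothesis to the last new edge to replace the terminal segment of~$P$ from the head of that edge by a detour through~$G$ ending at~$v$, re-route, and recurse. A secondary subtlety is ensuring that merging the detour path with the system witnessing~${I \in \mathcal{G}_G(v)}$ via Corollary~\ref{cor:pym}/Lemma~\ref{lem:covering-independent0} genuinely keeps~$uv$ as a terminal edge while covering~$I$; this is where the orthogonality to a separation in~${\mathfrak{S}_G(v)}$ is used, exactly as in the proof of Lemma~\ref{lem:covering-independent0}.
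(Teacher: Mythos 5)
This lemma is quoted from~\cite{attila-flames}*{Lemma~3.13} and not proved in the present paper, so your argument can only be judged on its own terms; it has a genuine gap at its central step. Reducing the problem to showing that the separation~$S_w$ witnessing $G$-largeness at~$w$ remains an ${(r,w)}$-separation in~${D-rw}$ is the right target, and isolating the last edge~${uv \in E(D)\setminus E(G)}$ on a putative offending path is reasonable. But the contradiction you then claim --- that the initial segment up to~$u$, the edge~$uv$, and a Pym-type merge with the witness for~${I \in \mathcal{G}_G(v)}$ produce a witness for~${(I+uv) \in \mathcal{G}_{G+uv}(v)}$ --- does not follow, and the inference you rely on is false as a general principle. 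Corollary~\ref{cor:pym} and Lemma~\ref{lem:covering-independent0} let you keep the terminal edges of \emph{one} of the two systems (forcing ${I \subseteq E^+(\mathcal{R})}$, say), not of both; there is no way to force ${\{uv\} \cup I \subseteq E^+(\mathcal{R})}$, and indeed the failure of exactly this is what the hypothesis ${(I+uv) \notin \mathcal{G}_{G+uv}(v)}$ expresses. Concretely, let~$G$ have edges ${ra, av, au}$ and ${D = G + uv}$: then ${I = \{av\} \in \mathcal{G}_G(v)}$ and ${(I+uv) \notin \mathcal{G}_{G+uv}(v)}$, yet there is an ${(r,v)}$-path ending with~$uv$ all of whose other edges lie in~$G$. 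So the data you assemble can coexist with the hypothesis, and no contradiction can be extracted by merging path-systems. The appeal to ``orthogonality to a separation in~$\mathfrak{S}_G(v)$'' cannot rescue this: you never produce such a separation for~$v$; the only separation in play is~$S_w$, for a different vertex, and it enters your merge step nowhere.

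What a correct argument needs is the dual use of the hypothesis: from ${I \in \mathcal{G}_G(v)}$ and ${(I+uv) \notin \mathcal{G}_{G+uv}(v)}$ one must extract a structural obstruction around~$u$ --- a vertex set linked from~$r$ by an orthogonal path-system which separates~$u$ (and the tails of~$I$) from~$r$ in~$G$ --- via an Aharoni--Berger/augmenting-walk duality in the reversed auxiliary digraph, exactly in the spirit of Lemma~\ref{lem:bubble} and the incompressibility machinery of Sections~\ref{sec:incompressibility} and~\ref{sec:key-lemma}. With such an obstruction one can reroute the offending path inside~$G$, drive the induction on the number of new edges, and ultimately contradict ${S_w \in \mathfrak{S}_G(w)}$; without it, your ``peel off the last new edge'' induction has no engine, since the replacement of the terminal portion through~$uv$ by a detour in~$G$ avoiding~$S_w$ is precisely what is never established. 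A smaller omission: $v$-largeness with respect to~$D$ also requires ${rv \in E(H)}$ whenever ${rv \in E(D)}$; you should note that the hypothesis forces ${\outgoing_D(r) \subseteq E(G)}$ (for ${u = r}$ one always has ${(I + rv) \in \mathcal{G}_{G+rv}(v)}$ by adding the trivial path~$rv$ to a witness for~$I$), after which this follows from $G$-largeness.
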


The second one says that to verify that~$L$ is large it is sufficient to show $v$-largeness only for vertices whose in-neighbourhoods differ in~$L$ and~$D$.

\begin{lem}[\cite{attila-flames}*{Lemma~2.2}]
    \label{lem:maintainlarge}
    Let~$D$ be a rooted digraph, let ${L\subseteq D}$ and let \linebreak 
    ${M := \{ v\in V \colon \ingoing_L(v)\subsetneq \ingoing_D(v) \}}$. 
    If~$L$ is $v$-large for all ${v \in M}$, then~$L$ is large. 
\end{lem}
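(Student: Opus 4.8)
To prove that $L$ is large it suffices, by the definition of largeness, to verify that $L$ is $w$-large for every $w \in V(D)-r$. For $w \in M$ this is exactly the hypothesis, so fix $w \notin M$, i.e.\ with $\ingoing_L(w) = \ingoing_D(w)$ (recall $\ingoing_L(w) \subseteq \ingoing_D(w)$ always holds). Write $D' := D - rw$ and $L' := L - rw$, so $\ingoing_{D'}(w) = \ingoing_{L'}(w)$. The edge clause of $w$-largeness is free: if $rw \in E(D)$ then $rw \in \ingoing_D(w) = \ingoing_L(w)$, hence $rw \in E(L)$. Thus everything reduces to producing some $\mathcal{R} \in \mathfrak{P}_{D'}(r,w)$ with $\mathcal{R} \subseteq L'$, equivalently an Erd\H{o}s--Menger $(r,w)$-separation $S$ of $D'$ admitting an $(r,w)$-path-system in $L'$ orthogonal to it.

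The plan is to start from an arbitrary Erd\H{o}s--Menger pair of $D'$ and repair it towards $L'$. Fix $\mathcal{P}_0 \in \mathfrak{P}_{D'}(r,w)$ and $S_0 \in \mathfrak{S}_{D'}(r,w)$ with $\mathcal{P}_0 \orthogonal S_0$ (these sets are nonempty by the Erd\H{o}s--Menger theorem, or by iterating Lemma~\ref{lem:augmentingwalk} after subdividing the edges into $w$); if $\mathcal{P}_0 \subseteq L'$ we are done. Otherwise, since every edge of $E(D) \setminus E(L)$ has its head in $M$, each path of $\mathcal{P}_0$ that leaves $L'$ does so first along an edge entering some $m \in M$, and its initial segment up to that $m$ lies in $L'$. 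Here the hypothesis enters: as $L$ is $m$-large, Corollary~\ref{cor:covering-independent} (via Lemma~\ref{lem:covering-independent0}) supplies, for any prescribed $I \in \mathcal{G}_L(m)$, an $(r,m)$-path-system whose removal of the trivial path at $m$ lies in $\mathfrak{P}_{D-rm}(r,m) \cap \mathfrak{P}_{L-rm}(r,m)$ and whose terminal-edge set contains $I$ — an Erd\H{o}s--Menger $(r,m)$-structure living in $L$. I would use such systems to reroute the defective initial segments of $\mathcal{P}_0$ through $L'$ and then invoke Corollary~\ref{cor:pym} to amalgamate these rerouted initial segments with the still-good terminal parts into a single $(r,w)$-path-system; iterating the repair and passing to a limit should yield some $\mathcal{R} \subseteq L'$, while Lemma~\ref{lem:augmentingwalk} applied to $\mathcal{R}$ then certifies a matching separation in $\mathfrak{S}_{D'}(r,w)$.

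The hard part will be running this repair process in the infinite setting. Cardinality gives no leverage here — for an infinite separation $S$ one has $|S|+1=|S|$, so one genuinely needs the structural Erd\H{o}s--Menger theorem rather than a counting argument — and the reroutings can cascade, a rerouted segment spawning fresh defects deeper along. To keep this under control I would: (i) always work with an extremal member of $\mathfrak{S}_{D'}(r,w)$, say its maximum $\top$ (which exists by Lemma~\ref{lem:completelattice}), so that each correction only moves the frontier of the construction closer to $r$, yielding monotone progress measured in the lattice order of Lemma~\ref{lem:completelattice}; (ii) well-order the defects and eliminate them by transfinite recursion, at each successor step fixing one defect via $m$-largeness and Corollary~\ref{cor:pym} while preserving those already fixed, and at limit steps taking unions of the path-systems together with the corresponding infima or suprema of the separations inside $\mathfrak{S}_{D'}(r,w)$; and (iii) check that the limiting path-system still lies in $L'$ and that its separation is still Erd\H{o}s--Menger for $D'$, again through Lemma~\ref{lem:augmentingwalk}. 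The genuinely delicate point is the amalgamation in step~(ii): two rerouted pieces may share vertices that $L'$ does not otherwise constrain, and reconciling them into one internally disjoint system with the prescribed terminal edges is exactly the task that Pym's theorem, in the form of Corollary~\ref{cor:pym}, is designed to resolve.
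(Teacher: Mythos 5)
Your reduction is fine: for $w \notin M$ the edge clause of $w$-largeness is automatic, and the task is to exhibit an $(r,w)$-path-system inside $L-rw$ orthogonal to a separation that works in $D-rw$. Note, however, that this paper does not prove the lemma at all (it is imported from \cite{attila-flames}), so your argument must stand on its own, and after the reduction it does not: each decisive step is delegated to machinery that cannot deliver it. The final certification is the clearest failure. Applying Lemma~\ref{lem:augmentingwalk} to your limit system $\mathcal{R}$ inside $D-rw$ may simply return an augmenting walk --- the augmentation takes place in $D-rw$ and may use edges outside $L$, so nothing in your construction excludes it --- while applying it inside $L-rw$ only yields a separation of $L-rw$, which is exactly not what $w$-largeness asks for. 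Nothing in the repair scheme ensures that at the limit no $D$-augmenting walk exists, and that is the whole content of the lemma: pairing an $L$-path-system with a separation valid in $D$. Similarly, Corollary~\ref{cor:pym} controls only initial vertices and terminal edges of the amalgamated system, not its edge set: invoked in $D-rw$ it can reintroduce edges outside $L$, invoked in $L-rw$ it severs the link to $S_0$; and once you reroute, orthogonality to $S_0$ is destroyed and no replacement member of $\mathfrak{S}_{D-rw}(r,w)$ is produced, so the lattice of Lemma~\ref{lem:completelattice} is not available for your ``monotone progress'' --- membership of the intermediate separations in $\mathfrak{S}_{D-rw}(r,w)$ together with orthogonal systems in $L-rw$ is precisely what is being proved.

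There are further structural problems with the repair itself. Edges of $E(D)\setminus E(L)$ are constrained only to have their heads in $M$, so they can occur anywhere along a path of $\mathcal{P}_0$, arbitrarily close to $w$ (only the final edge into $w$ is guaranteed to lie in $L$); hence there are no ``still-good terminal parts'' to keep, and a single path may require infinitely many corrections. The corrections replace segments rather than enlarge the system, so the process is not $\subseteq$-monotone and ``taking unions of the path-systems at limit steps'' is not well defined; moreover the lemma is stated for arbitrary rooted digraphs, so countable bookkeeping would not suffice anyway. Finally, the aside that $\mathfrak{P}_{D-rw}(r,w)\neq\emptyset$ follows ``by iterating Lemma~\ref{lem:augmentingwalk}'' overstates that lemma: iterated augmentation need not terminate, and the existence of an orthogonal pair in the infinite setting is the Aharoni--Berger theorem itself. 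What is missing is the one idea the lemma turns on: an argument converting the $m$-largeness of $L$ at the heads $m\in M$ of the missing edges into the assertion that a suitably chosen separation orthogonal to a path-system in $L-rw$ already separates $r$ from $w$ in $D-rw$. Rerouting paths and hoping to re-certify at the end does not supply that argument.
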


\subsection{Quasi-flames}
\label{subsec:quasi-flames} 
\ 

We extend the notion of a quasi-flame as defined in~\cite{attila-flames}. 
Let~${D}$ be a rooted digraph, let~${G\subseteq D}$ and let $v \in V(D)-r$. 
We say that~$D$ has the \emph{$G$-quasi-flame property at $v$} if~${I \in \mathcal{G}_{D}(v)}$ whenever ${I \subseteq \ingoing_D(v)}$ with ${I \setminus \ingoing_G(v)}$ finite. 
We call~$D$ a \emph{$G$-quasi-flame} if it has the $G$-quasi-flame property at each vertex. 
For example the digraph in Figure~\ref{f:ex-flame} is a $G$-quasi-flame for every spanning subgraph~$G$ with countably many edges.

Our aim is to reduce the main theorem to the case where~$D$ is an $F$-quasi-flame. 

\begin{cor}
    \label{t:large}
    Let~$F$ be a vertex-flame in~$D$. 
    Then there is an $F$-quasi-flame ${Z \subseteq D}$ such that whenever ${L \subseteq Z}$ is $Z$-large, then~$L$ is also $D$-large.
\end{cor}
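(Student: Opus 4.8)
The plan is to build $Z$ from $D$ by repeatedly deleting edges that are "superfluous" for flame-membership, while keeping $F\subseteq Z$, and to arrange that the deletion process terminates in a $G$-quasi-flame satisfying the hypothesis of Lemma~\ref{lem:superlarge} with $G=Z$ and $D$ in the roles of $G$ and $D$ there. Concretely, I would call an edge $uv\in E(D)\setminus E(F)$ \emph{removable at its current stage} if there is still some $I\in\mathcal G_{D'}(v)$ (with $D'$ the current digraph) such that $(I+uv)\notin\mathcal G_{D'+uv}(v)$ fails to hold, i.e.\ the edge is not yet "essential'' in the sense of Lemma~\ref{lem:superlarge}; the idea is to delete edges until every remaining edge outside $E(Z)$ — equivalently outside $E(D)\setminus E(Z)$ — is essential. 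Since $D$ is countable, I would enumerate $E(D)\setminus E(F)$ as $e_0,e_1,\dots$ and perform a transfinite (in fact $\omega$-length) sweep, at step $n$ deleting $e_n$ if and only if after all previous deletions the resulting digraph still has the $F$-quasi-flame property at the head of $e_n$; let $Z$ be the limit. Keeping $F$ untouched guarantees $F\subseteq Z$, and we must check $Z$ is an $F$-quasi-flame and that no further edge can be deleted, so that Lemma~\ref{lem:superlarge} applies to give that every $Z$-large $L$ is $D$-large.

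First I would verify that the $F$-quasi-flame property is preserved in the limit. The subtle point is that $\mathcal G$ is not in general continuous under deleting infinitely many edges, so one cannot naively pass to the limit; instead I would delete $e_n$ only when doing so provably preserves the $F$-quasi-flame property \emph{and} this is witnessed in a way robust to later deletions. The natural mechanism: when we wish to test whether $e_n=uv$ can go, we ask whether $D_n - uv$ still has the $F$-quasi-flame property at $v$ — this is the condition that for every finite $I_0\subseteq \ingoing_{D}(v)\setminus\ingoing_F(v)$ the set $\ingoing_F(v)\cup I_0$ (minus $uv$) still lies in $\mathcal G_{D_n-uv}(v)$. Here I expect to need the incompressibility framework promised in Section~\ref{sec:incompressibility} together with Corollary~\ref{cor:pym}/Lemma~\ref{lem:covering-independent0} to argue that finitely many "fresh'' ingoing edges can always be absorbed into an Erd\H os--Menger path-system, so that the $F$-quasi-flame property at $v$ is equivalent to a condition about a single suitably chosen Erd\H os--Menger separation $S\in\mathfrak S_{D}(v)$; that reduces an infinitary statement to a statement about one separation and makes the limit argument tractable.

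Next, with $Z$ in hand and $F$-quasi-flame, I would argue that for every $uv\in E(D)\setminus E(Z)$ there is $I\in\mathcal G_Z(v)$ with $(I+uv)\notin\mathcal G_{Z+uv}(v)$: this is precisely the statement that $uv$ was \emph{not} removable at the stage it was considered (or was removed and hence is absent — wait, it must be the edges still present; so this applies to $uv\in E(D)\setminus E(Z)$ which were deleted). Here the care is that an edge deleted early might have become essential only because of \emph{later} deletions; I would handle this by running the sweep in the opposite spirit — delete $e_n$ only if it is removable \emph{with respect to the final‐looking configuration}, which forces a fixed-point/closure argument: iterate the sweep to a fixed point so that in $Z$ no edge outside $E(Z)$ can be reinstated without destroying some witness $I$. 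Then Lemma~\ref{lem:superlarge} applied to $G=Z\subseteq D$ yields: every $Z$-large $H$ is $D$-large, which is the conclusion.

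The main obstacle, as flagged, is the limit/fixed-point interaction: single-edge deletions are easy to analyze (finite-type arguments via Pym and augmenting walks), but $\mathcal G_\bullet(v)$ can jump when infinitely many deletions accumulate at a vertex $v$ of infinite in-degree, so establishing that $Z$ is still an $F$-quasi-flame — and simultaneously that it is "saturated'' so that Lemma~\ref{lem:superlarge}'s hypothesis holds for all of $E(D)\setminus E(Z)$ — is where the real work lies. I expect this is exactly why the authors introduce incompressibility: it should give a monotone, limit-stable invariant (a system of disjoint paths from $\ingoing_F(v)$ plus a controlled finite reserve into an Erd\H os--Menger separation) that certifies the $F$-quasi-flame property at $v$ and is only destroyed by deleting an \emph{essential} edge, so the greedy deletion process both terminates correctly and produces a $Z$ meeting the hypothesis of Lemma~\ref{lem:superlarge}.
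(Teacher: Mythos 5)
Your proposal has a genuine gap, and it is exactly the one you flag yourself: the limit step of your deletion sweep. You never resolve it, and it cannot be patched with the tools you cite, because the $F$-quasi-flame property is not stable under accumulating infinitely many edge deletions at a vertex of infinite in-degree (nor does incompressibility rescue this step --- in the paper that machinery is reserved for the proof of Lemma~\ref{lem:key}, not for this corollary). Moreover, even at successor steps your bookkeeping is delicate: the witness $I$ certifying that a discarded edge $uv$ is ``essential'' must lie in $\mathcal{G}_Z(v)$ for the \emph{final} $Z$, and $\mathcal{G}_{\cdot}(v)$ only shrinks as you keep deleting, so a witness found at stage $n$ need not survive; your proposed remedy (``iterate the sweep to a fixed point'') is a plan, not an argument.

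The difficulty evaporates if you run the construction in the opposite direction, which is what the paper does. Observe that $F$ itself is an $F$-quasi-flame (here the hypothesis that $F$ is a flame enters, together with the fact that $\mathcal{G}_F(v)$ is closed under taking subsets), and that the union of a $\subseteq$-increasing non-empty chain of $F$-quasi-flames is again one: any $I \subseteq \ingoing_{\bigcup_\alpha Z_\alpha}(v)$ with $I \setminus \ingoing_F(v)$ finite already lies inside some member of the chain, and a witnessing path-system persists when edges are \emph{added}. So Zorn's Lemma yields a $\subseteq$-maximal $F$-quasi-flame $Z \subseteq D$. For any $uv \in E(D)\setminus E(Z)$, maximality says $Z+uv$ is not an $F$-quasi-flame; since adding $uv$ can only enlarge $\mathcal{G}_{\cdot}(w)$ at every $w$, and enlarges the family of sets to be checked only at $w=v$ (namely by the sets containing $uv$), the failure must be witnessed by some $I \in \mathcal{G}_Z(v)$ with $(I+uv)\notin\mathcal{G}_{Z+uv}(v)$ --- precisely the hypothesis of Lemma~\ref{lem:superlarge}, which then yields that every $Z$-large $L$ is $D$-large. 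Monotonicity in the increasing direction is what makes the limit (union) step trivial; your decreasing sweep fights that monotonicity at every turn.
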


\begin{proof}
    Observe that~$F$ itself is an $F$-quasi-flame and the union of an $\subseteq$-increasing non-empty chain of $F$-quasi-flames in~$D$ is an $F$-quasi-flame. 
    Thus by Zorn's Lemma we may take a $\subseteq $-maximal $F$-quasi-flame~$Z$ in~$D$. 
    To show that~$Z$ has the desired properties, we apply Lemma~\ref{lem:superlarge}. 
    Assume that~${uv \in E(D) \setminus E(Z)}$. 
    By the maximality of~$Z$, ${Z+uv}$ is not an $F$-quasi-flame. 
    The only possible reason for this is the existence of an~${I \in \mathcal{G}_Z(v)}$ with~${I \supseteq \ingoing_F(v)}$ 
    where~${I \setminus \ingoing_A(v)}$ finite and such that ${(I+uv) \notin \mathcal{G}_{Z+uv}(v)}$. 
    Thus we are done by applying Lemma~\ref{lem:superlarge}.
 \end{proof}

For a countable $G$-quasi-flame~$D$ and a vertex~${v \in V(D)}$,  
the following key lemma allows us to find a set ${I^* \in \mathcal{G}_D(v)}$ 
such that the restriction of~$D$ to~$I^*$ at~$v$ is still a $G$-quasi-flame. 

\begin{restatable}{lemma}{key}\label{lem:key}
    Let $D$ be a countable $G$-quasi-flame for some ${G \subseteq D}$ and let~${v \in V(D)-r}$. 
    Then there is an ${I^* \in \mathcal{G}_D(v)}$ such that~${D \upharpoonright_v I^*}$ is a $G$-quasi-flame.
\end{restatable}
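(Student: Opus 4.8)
The plan is to build $I^*$ and witness it with a single $(r,v)$-path-system by a bookkeeping/exhaustion argument over the countably many ``threats'' to the $G$-quasi-flame property. First I would enumerate the vertices of $D$ as $w_0, w_1, \dots$, and for each $w_n$ enumerate the (at most countably many) finite sets $J \subseteq \ingoing_D(w_n)$ with $J \setminus \ingoing_G(w_n) = \emptyset$, i.e.\ the finite subsets of $\ingoing_G(w_n)$; together with the possible finite augmentations of $\ingoing_G(w_n)$, these generate a countable list of ``demands'' of the form: a specified finite $F$-admissible-or-$G$-contained set of ingoing edges at some vertex must still lie in $\mathcal{G}$ after we restrict at $v$. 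I would interleave this with an enumeration of $\ingoing_D(v)$ itself, so as to control which ingoing edges of $v$ survive into $I^*$.

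Next I would construct, in $\omega$ steps, an increasing sequence of ``partial'' $(r,v)$-path-systems (or rather, a coherent sequence of finite sets of internally disjoint $(r,w_n)$-paths handling the first $n$ demands) whose union stabilises to the required global witness. The key engine at each step is Pym's theorem (Theorem~\ref{thm:pym}) together with its corollary and Lemma~\ref{lem:covering-independent0}: given the current path-system and a new demand ``$J \in \mathcal{G}$ at $w$'', since $D$ is a $G$-quasi-flame we know $J \in \mathcal{G}_D(w)$, and we can use the Pym-type merging to produce a new $(r,w)$-path-system whose terminal edges cover $J$ while \emph{not spending} more of $\ingoing_D(v)$ than necessary — crucially, we only ever need finitely many paths through $v$ at each stage because each individual demand is finite. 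The set $I^*$ is then defined as the set of ingoing edges of $v$ that are terminal edges of the limiting path-system; by construction $I^* \in \mathcal{G}_D(v)$.

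To verify that $D \upharpoonright_v I^*$ is a $G$-quasi-flame, take any vertex $w$ and any $I \subseteq \ingoing_{D \upharpoonright_v I^*}(w)$ with $I \setminus \ingoing_G(w)$ finite; I must exhibit an $(r,w)$-path-system in $D \upharpoonright_v I^*$ with terminal edge set $I$. Since $I \setminus \ingoing_G(w)$ is finite, $I$ is ``close'' to a subset of $\ingoing_G(w)$, so it appears (or is dominated by something appearing) among the demands, and the construction guarantees a witnessing path-system all of whose paths, where they pass through $v$, use only edges of $I^*$ as the relevant ingoing edges at $v$; hence the path-system survives in the restriction. The slightly delicate point is the case $w = v$ itself: there $I \subseteq I^*$ with $I^* \setminus \ingoing_G(v)$ not necessarily finite, but $I \setminus \ingoing_G(v)$ finite, and one needs the stronger statement that \emph{every} such finite-error subset of $I^*$ is realisable within the restriction; this should follow because $I^*$ was assembled precisely as a union over finite demands, each realised by paths that only use ingoing edges of $v$ lying in $I^*$, so any finite sub-demand is covered by finitely many of these stages.

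The main obstacle I anticipate is exactly this coherence across infinitely many stages: a naive augmentation at stage $n+1$ to satisfy a new demand at $w_{n+1}$ might reroute paths in a way that destroys a previously-handled demand, or, worse, might require pulling in ingoing edges of $v$ that we had already decided to exclude, so that $I^*$ is not well-defined as a limit. Handling this needs a careful ``fairness'' discipline and a monotonicity guarantee — e.g.\ always merging \emph{onto} the existing finite sub-system via Pym so that terminal edges are only added and never removed at vertices we have committed to, and freezing the finitely many paths through $v$ that a satisfied demand relies on. I expect one either argues that the symmetric differences $E(\mathcal{P}_n) \triangle E(\mathcal{P}_{n+1})$ are finite and the edges through $v$ eventually stabilise locally, or reorganises the whole construction as: first fix $I^*$ by a separate closure/fixed-point argument (using Lemma~\ref{lem:completelattice} on the relevant lattice of Erd\H{o}s--Menger separations to pin down the ``right'' ingoing set at $v$), and only then verify the quasi-flame property demand-by-demand using Lemma~\ref{lem:covering-independent0}. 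The lattice structure of $\mathfrak{S}_D(v)$ is probably the cleanest route to making ``the restriction is still a quasi-flame'' fall out, since it lets one choose $I^*$ orthogonal to a canonical (say, $\unlhd$-maximal) Erd\H{o}s--Menger separation, forcing enough robustness.
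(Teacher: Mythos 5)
There is a genuine gap, and it is exactly the one you flag yourself without resolving it: the limit object of your recursion need not exist in the required sense. The set $\mathcal{G}_D(v)$ is not closed under increasing unions --- having, at every finite stage, an internally disjoint $(r,v)$-path-system realising the finitely many ingoing edges of $v$ committed so far does not yield a single path-system realising their union, and this is the entire difficulty of the lemma. Your proposed remedies do not repair this. Pym's Theorem~\ref{thm:pym} and the augmenting-walk Lemma~\ref{lem:augmentingwalk} only give finite symmetric differences per step, which does not make the terminal edges at $v$ stabilise; ``freezing'' the paths a satisfied demand relies on can render later demands unsatisfiable (a later demand at some $w$ may only be witnessable through a specific ingoing edge of $v$ whose access your frozen paths block); and choosing $I^*$ orthogonal to a canonical ($\unlhd$-extremal) separation in $\mathfrak{S}_D(v)$ says nothing about $I^*$ meeting prescribed infinite subsets of $\ingoing_D(v)\setminus\ingoing_G(v)$, which is what the demands actually require. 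Note also that the demands at different vertices $w\neq v$ never need to be disjoint from one another, so the only genuinely global object is the one $(r,v)$-path-system for $I^*$; organising the proof around a single interleaved system of paths to all the $w_n$ obscures rather than solves the problem.

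For comparison, the paper first reduces the quasi-flame requirement to a hitting problem: since an internally disjoint system to $w\neq v$ passes through $v$ at most once, each ``relevant'' demand $I$ at $w$ only needs $I^*$ to contain one edge of a non-empty set $N_I\subseteq\ingoing_D(v)\setminus\ingoing_G(v)$, and $D\upharpoonright_v J$ is a $G$-quasi-flame precisely when $J$ meets every $N_I$ (the property at $v$ itself is automatic). The two hard points are then handled by machinery your sketch has no substitute for: hitting the countably many \emph{infinite} sets $N_I$ while staying inside $\mathcal{G}_D(v)$ uses the incompressibility/finite-extendability framework (Corollary~\ref{cor:extend-intersect-family}, resting on Lemma~\ref{lem:incomp-infinite-intersection} and the lemmas before it), and adding the possibly infinitely many edges forced by \emph{finite} sets $N_I$ uses a $\subseteq$-minimal choice of $I^*$, Lemma~\ref{lem:bubble} to extract a separator-plus-path-system from each forced edge, and a recursion gluing these along separators via Lemma~\ref{lem:separsup} to produce one path-system witnessing ${I^*\in\mathcal{G}_{D^*}(v)}$. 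Without an argument of comparable strength for why your exhaustion converges, the proposal does not prove the lemma.
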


We will prove this key lemma in Section~\ref{sec:key-lemma} after laying the necessary ground work in Section~\ref{sec:incompressibility}.

\section{Proof of the main theorem}
\label{sec:proof}

In this section we prove our main result Theorem~\ref{thm:extend}. 
To do so we assume Lemma~\ref{lem:key}, whose proof is postponed to Section~\ref{sec:key-lemma}.

\extend*

\begin{proof}

Note that the union of any flame~$F$ in a given rooted digraph~$D$ with~$\outgoing_D(r)$ is still a flame. 
Hence we may assume that~$F$ contains~$\outgoing_D(r)$. 
By Corollary~\ref{t:large} we can also assume without loss of generality that~$D$ is an $F$-quasi-flame. 

Let ${\{v_n \colon n \in \mathbb{N} \}}$ be an enumeration of~${V(D)-r}$.
Let~${G_{-1} := F}$ and~${L_{-1} := D}$. 
We will recursively construct 
\begin{itemize}
    \item a sequence ${(\mathcal{P}_n \colon n \in \mathbb{N})}$ of Erd\H{o}s-Menger path-systems ${\mathcal{P}_n \in \mathfrak{P}_{D}(v_{n})}$;
    \item two sequences $(G_n \colon n \in \mathbb{N})$ and $(L_n \colon n \in \mathbb{N})$ of spanning subdigraphs of $D$; 
\end{itemize}
such that for all~${n \in \mathbb{N}}$:
\begin{enumerate}
    [label=(\arabic*)]
    \item\label{item0} $\mathcal{P}_n\in \mathfrak{P}_{L_{n-1}}(v_{n})\cap \mathfrak{P}_{D}(v_{n})$ with $\ingoing_{G_{n-1}}(v_n) \subseteq E^+(\mathcal{P}_n)\cup \{ rv_n \}$;
    \item\label{item1} $G_{n}$ is obtained by adding the edges of the paths in $\mathcal{P}_n$ to $G_{n-1}$; 
    \item\label{item2} $L_{n} = L_{n-1} \upharpoonright_{v_n} \ingoing_{G_n}(v_n)$; 
    \item\label{item4} $L_n$ is a $G_n$-quasi-flame.
\end{enumerate}

By~\ref{item0},~\ref{item1} and~\ref{item2}, $(G_n \colon n \in \mathbb{N})$ is $\subseteq$-increasing, $(L_n \colon n \in \mathbb{N})$ is $\subseteq$-decreasing and~${G_n \subseteq L_n}$ for all~${n \in \mathbb{N}}$. 
Moreover, 
\[{
    \ingoing_{G_{n}}(v_n) \setminus \{rv_n\} = \ingoing_{L_{n}}(v_n) \setminus \{rv_n\} = E^+(\mathcal{P}_n)}.
\] 
Hence ${\bigcup_{n \in \mathbb{N}} G_n = \bigcap_{n \in \mathbb{N}} L_n =: F^*}$, and~${F \subseteq F^*}$. 
Furthermore, ${\mathcal{P}_n \in \mathfrak{P}_{D}(v_{n})}$ ensures that~$F^*$ is $v_n$-large and ${\ingoing_{F^*}(v_n) \in \mathcal{G}_{F^*}(v_n)}$. 
Combining these we conclude that~$F^*$ is a large flame in~$D$ extending~$F$, as desired. 

Suppose that $G_{n-1}$ and~$L_{n-1}$ are defined for some~${n \in \mathbb{N}}$ and~$\mathcal{P}_i$ is defined for all~$i$ with~${0 \leq i < n}$. 
By Lemma~\ref{lem:key} applied to~$L_{n-1}$, $G_{n-1}$ and~$v_n$ there is an ${I^* \in \mathcal{G}_{L_{n-1}}(v_n)}$ such that ${L_{n-1} \upharpoonright_{v_n} I^*}$ is a $G_{n-1}$-quasi-flame. 

Since ${\mathcal{P}_i \in \mathfrak{P}_{L_{n-1}}(v_{i})}$ 
for all~${i < n}$, properties~\ref{item1} and~\ref{item2} imply that~$L_{n-1}$ satisfies the conditions of Lemma~\ref{lem:maintainlarge} and hence~$L_{n-1}$ is large. 
Thus, we may apply Corollary~\ref{cor:covering-independent} to~$L_{n-1}$, $v_n$ and~${I^* \in \mathcal{G}_{L_{n-1}}(v_n)}$ to find
an Erd\H{o}s-Menger path-system ${\mathcal{P}_n \in \mathfrak{P}_{D}(v_n) \cap \mathfrak{P}_{L_{n-1}}(v_n)}$ such that~${I^* \subseteq E^+(\mathcal{P}_n) \cup \{rv_n\}}$. 
We define~$G_n$ and~$L_n$ according to~\ref{item1} and~\ref{item2}, 
i.e., ${G_{n} := G_{n-1} \cup \bigcup\mathcal{P}_n}$ and~${L_{n} := L_{n-1} \upharpoonright_{v_n} \ingoing_{G_n}(v_n)}$. 
We claim that~$G_{n}$, $L_{n}$ and~$\mathcal{P}_{n}$ satisfy properties~{\ref{item0}--\ref{item4}}. 

Clearly, properties~\ref{item1} and~\ref{item2} are satisfied. 
By construction, ${\bigcup \mathcal{P}_n \subseteq L_{n-1}}$ and \linebreak 
${\ingoing_{G_{n-1}}(v_n) \subseteq I^* \subseteq E^+(\mathcal{P}_n)\cup \{ rv \}}$. 
Finally, we need to show that~$L_{n}$ is a $G_{n}$-quasi-flame. 

Note that by construction~${\ingoing_{L_{n}}(v_n) \in \mathcal{G}_{L_{n}}(v_n)}$, as guaranteed by~$\mathcal{P}_n$. 
Hence we only need to check that the $G_n$-quasi-flame property holds at~${v_m \neq v_n}$. 
However, ${L_{n-1} \upharpoonright_{v_n} I^*}$ is a $G_{n-1}$-quasi-flame and hence, since $I^* \subseteq \ingoing_{G_n}(v_n)$, it follows that $L_n$ satisfies the $G_{n-1}$-quasi-flame property at every~${v_m \neq v_n}$. 
Furthermore, since~${\ingoing_{G_{n}}(v_m)}$ has at most one new edge compared to~${\ingoing_{G_{n-1}}(v_m)}$, it follows that~$L_{n}$ satisfies the $G_{n}$-quasi-flame property at every~${v_m \neq v_n}$. 
\end{proof}

\section{Incompressibility}\label{sec:incompressibility}

Incompressibility will be a key concept in the proof of Lemma~\ref{lem:key} but its scope of potential applications is not restricted to flames. 
For example it played an important role in the arborescence packing result~\cite{arbpacking} of the third author. 

We will be interested in when we can find an ${(X,Y)}$-path-system which covers the vertices of~$X$, in which case we say that~$X$ is joinable to~$Y$. 
If~$X$ and~$Y$ are both infinite, then clearly this cannot happen if there is some finite ${(X,Y)}$-separation in~$D$. 
However, even when the two sets are infinitely connected, it can happen that~$X$ is not joinable to~$Y$.

One obstruction to this would be an ${(X,Y)}$-separation~$S$ which is `smaller than~$X$' in the sense that it is the `same size' as a proper subset of~$X$. 
For finite sets~$U$ and~$W$ it is clear that the equality~${|U| = |W|}$ is equivalent to the condition that every injective map from~$U$ to~$W$ is also a bijection. 
For countably infinite sets this is no longer true, since~$U$ can be injected, or `compressed', into an arbitrary infinite subset of~$W$. 
However if we place restrictions on the types of injections we allow, then there may be sets~$U$ and~$W$ where only bijections are possible, and so~$U$ is `incompressibile' to~$W$, respresenting a sort of tightness between~$U$ and~$W$ with respect to the set of injections.

In our particular case we are considering the injections which arise from ${(U,W)}$-path-systems which cover the vertices in~$U$. 
Each such system defines in a natural way an injection from~$U$ to~$W$, and for certain pairs of infinite sets, the only injections which arise in this manner will be bijections. 
In this case we will say that~$U$ is incompressible to~$W$. 

Then, an $(X,Y)$-separation~$S$ can be the `same size' as a proper subset~$X'$ of~$X$, if~$X'$ is incompressible to~$S$. 
It is then easy to see this is an obstruction to~$X$ being joinable to~$Y$, as any ${(X,Y)}$-path-system witnessing this would contain an ${(X',S)}$-path-system, which by the incompressability of~$X'$ to~$S$ would cover~$S$, and so separate~$X$ from~$Y$. 

We will show that, if there is some $\subseteq$-maximal proper subset~$X'$ of~$X$ which is joinable to~$Y$, 
then this will in fact by witnessed by such a `tight' ${(X,Y)}$-separation~$S$, that is, where $X'$ is incompressible to~$S$.

\vspace{0.2cm}

\begin{definition}
    \label{def:incompressability}
    
    Let~$D$ be a digraph and let~${X,Y \subseteq V(D)}$. 
    \begin{itemize}
        \item We say~$X$ is \emph{joinable} to~$Y$ in~$D$ if there is an ${(X,Y)}$-path-system~$\mathcal{P}$ in~$D$ with~${V^{-}(\mathcal{P}) = X}$. We denote the set of all such path-systems by~$\mathfrak{J}_{D}(X,Y)$. 
        \item Suppose~$X$ is joinable to~$Y$. 
            We say that~$X$ is \emph{incompressible} to~$Y$ in~$D$ if for every~${\mathcal{P} \in \mathfrak{J}_{D}(X,Y)}$ we have~${V^{+}(\mathcal{P}) = Y}$. 
    \end{itemize}
    For examples of these terms, see Figure~\ref{f:ex-incomp}. 
\end{definition}

    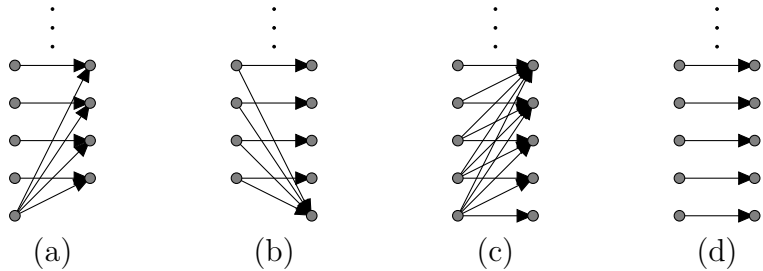
\begin{figure}[htbp]
        \centering
        \begin{tikzpicture}
            [scale=0.5]
            \tikzset{vertex/.style = {circle, draw, fill=black!50, inner sep=0pt, minimum width=4pt}}
            \tikzset{dot/.style = {circle, draw, fill=black, inner sep=0pt, minimum width=1pt}}
            \tikzset{arrow/.style = {-triangle 45}}
            
            \pgfmathtruncatemacro{\n}{4};
            \pgfmathtruncatemacro{\nminus}{\n-1};
        
            \foreach \i in {0,...,\n} {
                \coordinate (v\i) at (0,\i) {};
                \coordinate (w\i) at (2,\i) {};
            }
            
            \foreach \i in {1,...,\n} {
                \draw [-triangle 45] (v\i) -- (w\i) {};
                \draw [-triangle 45] (v0) -- (w\i) {};
            }
            
            \foreach \i in {1,...,\n} {
                \node[vertex] at (v\i) {};
                \node[vertex] at (w\i) {};
            }
            \node[vertex] at (v0) {};
            
            \foreach \i in {1,...,3} {
                \node[dot] at (1,\n+0.5*\i) {};
            }
            \node at (1,-1) {(a)};
        \end{tikzpicture}
        \qquad
        \qquad
        \begin{tikzpicture}
            [scale=0.5]
            \tikzset{vertex/.style = {circle, draw, fill=black!50, inner sep=0pt, minimum width=4pt}}
            \tikzset{dot/.style = {circle, draw, fill=black, inner sep=0pt, minimum width=1pt}}
            \tikzset{arrow/.style = {-triangle 45}}
            
            \pgfmathtruncatemacro{\n}{4};
            \pgfmathtruncatemacro{\nminus}{\n-1};
        
            \foreach \i in {0,...,\n} {
                \coordinate (v\i) at (0,\i) {};
                \coordinate (w\i) at (2,\i) {};
            }
            
            \foreach \i in {1,...,\n} {
                \draw [-triangle 45] (v\i) -- (w\i) {};
                \draw [-triangle 45] (v\i) -- (w0) {};
            }
            
            \foreach \i in {1,...,\n} {
                \node[vertex] at (v\i) {};
                \node[vertex] at (w\i) {};
            }
            \node[vertex] at (w0) {};
            
            \foreach \i in {1,...,3} {
                \node[dot] at (1,\n+0.5*\i) {};
            }
            \node at (1,-1) {(b)};
        \end{tikzpicture}
        \qquad
        \qquad
        \begin{tikzpicture}
            [scale=0.5]
            \tikzset{vertex/.style = {circle, draw, fill=black!50, inner sep=0pt, minimum width=4pt}}
            \tikzset{dot/.style = {circle, draw, fill=black, inner sep=0pt, minimum width=1pt}}
            \tikzset{arrow/.style = {-triangle 45}}
            
            \pgfmathtruncatemacro{\n}{4};
            \pgfmathtruncatemacro{\nminus}{\n-1};
        
            \foreach \i in {0,...,\n} {
                \coordinate (v\i) at (0,\i) {};
                \coordinate (w\i) at (2,\i) {};
            }
            
            \foreach \i in {0,...,\n} {
                \foreach \j in {\i,...,\n} {
                    \draw [-triangle 45] (v\i) -- (w\j) {};
                }
            }
            
            \foreach \i in {0,...,\n} {
                \node[vertex] at (v\i) {};
                \node[vertex] at (w\i) {};
            }
            
            \foreach \i in {1,...,3} {
                \node[dot] at (1,\n+0.5*\i) {};
            }
            \node at (1,-1) {(c)};
        \end{tikzpicture}
        \qquad
        \qquad
        \begin{tikzpicture}
            [scale=0.5]
            \tikzset{vertex/.style = {circle, draw, fill=black!50, inner sep=0pt, minimum width=4pt}}
            \tikzset{dot/.style = {circle, draw, fill=black, inner sep=0pt, minimum width=1pt}}
            \tikzset{arrow/.style = {-triangle 45}}
            
            \pgfmathtruncatemacro{\n}{4};
            \pgfmathtruncatemacro{\nminus}{\n-1};
        
            \foreach \i in {0,...,\n} {
                \coordinate (v\i) at (0,\i) {};
                \coordinate (w\i) at (2,\i) {};
            }
            
            \foreach \i in {0,...,\n} {
                \draw [-triangle 45] (v\i) -- (w\i) {};
            }
            
            \foreach \i in {0,...,\n} {
                \node[vertex] at (v\i) {};
                \node[vertex] at (w\i) {};
            }
            
            \foreach \i in {1,...,3} {
                \node[dot] at (1,\n+0.5*\i) {};
            }
            \node at (1,-1) {(d)};
        \end{tikzpicture}
        \label{f:ex-incomp}
        \caption{Examples for the terms in Definitions~\ref{def:incompressability} and~\ref{def:finitelyextendable}. 
            The set~$X$ in each example is the set of sources, the set~$Y$ the set of sinks. 
            In~(a), $X$ is not joinable to~$Y$. 
            In~(b) and~(c), $X$ is joinable to~$Y$ but not incompressible. In fact in~(b) no~$(X,Y)$-path system covers~$Y$, while in~(c) there exists a perfect matching. 
            In~(d), $X$ is incompressible to~$Y$. 
            Note that in~(a), every subset~$X'$ of~$X$ for which~${X \setminus X'}$ is infinite is finitely extenable.}
    \end{figure}

Let~$D$,~$X$ and~$Y$ be fixed as above. 
For technical reasons we assume that the vertices in~$X$ have no ingoing edges and the vertices in~$Y$ have no outgoing edges. 
Those edges will never be contained in any~${(X,Y)}$-path and hence to not play any role in any of the statements.

\begin{lem}
    \label{lem:incomp}
    Let~${x \in X}$ and assume that 
    ${(X-x)}$ is joinable to~$Y$ but~$X$ is not. 
    Then there is an ${S \in \mathfrak{S}_D(X,Y)}$ such that~${(X-x)}$ is incompressible to~$S$ in~$D$. 
\end{lem}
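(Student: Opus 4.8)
The plan is to read the separation $S$ off from a single application of the augmenting walk lemma, and then to establish incompressibility by a contradiction argument in which the lemma is applied a second time.

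\textbf{Obtaining $S$.} Write $X':=X-x$. By hypothesis there is some $\mathcal{P}\in\mathfrak{J}_D(X',Y)$, which we regard as an $(X,Y)$-path-system (it is one — it merely fails to cover $X$; note that the vertices of $X$ have no ingoing edges, so an $(X',Y)$-path is automatically an $(X,Y)$-path). Apply Lemma~\ref{lem:augmentingwalk} to $\mathcal{P}$. Since $X\setminus V^-(\mathcal{P})=\{x\}$, the first alternative would provide an $(X,Y)$-path-system $\mathcal{P}'$ with $V^-(\mathcal{P}')=X'\cup\{x\}=X$, i.e.\ $X$ would be joinable to $Y$, contradicting the hypothesis; so the second alternative holds and there is an $(X,Y)$-separation $S$ with $\mathcal{P}\orthogonal S$, whence $S\in\mathfrak{S}_D(X,Y)$. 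Using $\mathcal{P}\orthogonal S$, cut each path of $\mathcal{P}$ at its unique vertex in $S$: the initial segments form an $(X',S)$-path-system $\mathcal{P}^-$ with $V^-(\mathcal{P}^-)=X'$ and $V^+(\mathcal{P}^-)=S$, and the terminal segments form an $(S,Y)$-path-system $\mathcal{P}^+$ with $V^-(\mathcal{P}^+)=S$. In particular $X'$ is joinable to $S$, so by Definition~\ref{def:incompressability} it remains only to prove that $X'$ is incompressible to $S$.

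\textbf{Incompressibility.} Suppose not; fix $\mathcal{Q}\in\mathfrak{J}_D(X',S)$ and a vertex $s_0\in S\setminus V^+(\mathcal{Q})$. We derive a contradiction by constructing an $(X,Y)$-path-system covering $X$. First apply Lemma~\ref{lem:augmentingwalk} to $\mathcal{Q}$, this time with source set $X$ and sink set $S$: because $X\setminus V^-(\mathcal{Q})=\{x\}$ and $s_0\in S\setminus V^+(\mathcal{Q})$, the first alternative yields an $(X,S)$-path-system $\mathcal{Q}^{*}$ with $V^-(\mathcal{Q}^{*})=X$ and $V^+(\mathcal{Q}^{*})\subseteq S$. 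It then remains to route on from $S$ to $Y$: since $\mathcal{P}^+$ is an $(S,Y)$-path-system covering $S$ (hence covering $V^+(\mathcal{Q}^{*})$), deleting the interiors of the paths of $\mathcal{Q}^{*}$ from $D$ and finding inside what remains a path-system from $V^+(\mathcal{Q}^{*})$ onto $Y$ — or, equivalently, concatenating $\mathcal{Q}^{*}$ with $\mathcal{P}^+$ and disentangling the resulting walks into a path-system by a Pym-type argument (Theorem~\ref{thm:pym}, Corollary~\ref{cor:pym}) — produces an $(X,Y)$-path-system covering $X$, the desired contradiction. The second alternative of this second application of Lemma~\ref{lem:augmentingwalk} instead returns an $(X,S)$-separation $T$ with $\mathcal{Q}\orthogonal T$; since $T\unlhd S$, $T$ is also an $(X,Y)$-separation, and this case is to be excluded by the same concatenation idea applied to $\mathcal{Q}$ and $\mathcal{P}^+$ (or by iterating the construction with $T$ in place of $S$ and invoking the complete-lattice structure of $\mathfrak{S}_D(X,Y)$ from Lemma~\ref{lem:completelattice}).

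\textbf{Main obstacle.} The crux is this passage ``from $S$ on to $Y$'': although $\mathcal{Q}^{*}$ covers $X$ into $S$ and $\mathcal{P}^+$ covers $S$ into $Y$, one cannot simply take the union of the two families, since a path of $\mathcal{Q}^{*}$ may run through the interior of a path of $\mathcal{P}^+$, and vice versa. What is really needed is the auxiliary fact that joinability composes — if a set $A$ is joinable to $B$, and $B$ is joinable to $C$ by a path-system covering $B$, then $A$ is joinable to $C$ — which, because the two path-systems meet only in a restricted way (every vertex lies on at most one path of each family, hence on at most two of the concatenated walks), I would prove by uncrossing, along the lines of the proof of Pym's theorem. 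A secondary, technically similar, point is the handling of the separation alternative in the second application of the augmenting walk lemma, which I would address within the same framework.
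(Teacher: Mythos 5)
There is a genuine gap, and it sits exactly where you wave your hands: the separation branch of the second application of Lemma~\ref{lem:augmentingwalk}. Your $S$ is merely \emph{some} $(X,Y)$-separation orthogonal to one fixed system $\mathcal{P}\in\mathfrak{J}_D(X-x,Y)$, and such an $S$ need not satisfy the conclusion at all, so no amount of local repair can close the argument as set up. Concretely, let ${X=\{x\}\cup\{a_i\colon i\in\mathbb{N}\}}$ where $x$ has no outgoing edges, let ${B=\{b_i\colon i\in\mathbb{N}\}}$ and ${Y=\{y_i\colon i\in\mathbb{N}\}}$, with edges $a_ib_i$, $a_ib_{i+1}$ and $b_iy_i$. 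Then ${X-x}$ is joinable to $Y$, $X$ is not, and ${\mathcal{P}=\{a_ib_iy_i\colon i\in\mathbb{N}\}}$ is orthogonal to the Erd\H{o}s-Menger separation $B$; yet ${X-x}$ is \emph{not} incompressible to $B$, since ${\{a_ib_{i+1}\colon i\in\mathbb{N}\}}$ covers ${X-x}$ and misses $b_1$. So with this $S$ the separation branch really does occur for some $\mathcal{Q}\in\mathfrak{J}_D(X-x,S)$, and neither of your proposed ways out works: concatenating $\mathcal{Q}$ with $\mathcal{P}^+$ only re-proves that ${X-x}$ is joinable to $Y$, which is the hypothesis rather than a contradiction, and ``iterating with $T$ in place of $S$'' comes with no termination or convergence argument.

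The missing idea is to fix $S$ \emph{in advance} as the $\unlhd$-smallest element of $\mathfrak{S}_D(X,Y)$, which exists by Lemma~\ref{lem:completelattice} (and $\mathfrak{S}_D(X,Y)\neq\emptyset$ by Aharoni--Berger). This is what the paper does: for an arbitrary ${\mathcal{Q}\in\mathfrak{J}_D(X-x,S)}$, the augmenting branch of Lemma~\ref{lem:augmentingwalk} is impossible, because it would make $X$ joinable to $S$ and hence, via the terminal segments of a path-system orthogonal to $S$, joinable to $Y$; therefore the second branch yields an $(X,S)$-separation $T$ with ${\mathcal{Q}\orthogonal T}$, and since ${T\in\mathfrak{S}_D(X,Y)}$ with ${T\unlhd S}$, minimality forces ${T=S}$ and hence ${V^{+}(\mathcal{Q})=S}$, which is exactly incompressibility. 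Incidentally, the ``main obstacle'' you flag is not one: a path of an $(X,S)$-path-system meets $S$ only in its last vertex, so if it met the part of an orthogonal system strictly beyond $S$ one could splice together an $(X,Y)$-path avoiding $S$, contradicting that $S$ separates; hence the concatenation across $S$ needs no Pym-type uncrossing. The genuinely load-bearing ingredient is the lattice minimum, not a linkage theorem.
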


\begin{proof}
The Aharoni-Berger theorem \cite{infinite-menger} ensures that~${\mathfrak{S}_D(X,Y) \neq \emptyset}$. 
    Let~$S$ be the $\unlhd$-smallest element of~${\mathfrak{S}_D(X,Y)}$, which exists by Lemma~\ref{lem:completelattice}, 
    and let~${\mathcal{P} \in \mathfrak{J}(X-x,S)}$ be arbitrary. 
    Since there is a~${\mathcal{P'} \in \mathfrak{P}_D(X,Y)}$ 
    orthogonal to~$S$, 
    if~$X$ is joinable to~$S$ 
    then we can extend an $(X,S)$-path-system using the terminal segments of~$\mathcal{P}'$ starting at~$S$ to one witnessing 
    that~$X$ is joinable to~$Y$. 
    Hence, $X$ is not joinable to~$S$. 
    
    The vertices in~$X$ have no ingoing edges by assumption, and hence none of the paths in~$\mathcal{P}$ goes through~$x$. 
    Applying Lemma~\ref{lem:augmentingwalk} to~$X$,~$S$ and~$\mathcal{P}$ we see that the first case of the lemma would allow us to conclude 
    that~$X$ is joinable to~$Y$ 
    and hence there is some ${(X,S)}$-separation~$T$ such that~${\mathcal{P} \orthogonal T}$. 
    Since~$T$ is also an ${(X,Y)}$-separation, it follows that~${T \in \mathfrak{S}_D(X,Y)}$ and hence by $\unlhd$-minimality of~$S$, we have~${S = T}$ and so~${V^{+}(\mathcal{P}) = S}$.
\end{proof}

\begin{lem}
    \label{lem:incomp-as-many-new}
    Suppose that~$X$ is joinable to~$Y$ and~$X'$ is joinable to~$Y'$ 
    for some~${X' \subseteq X}$ and~${Y' \subseteq Y}$ with~${ \left|X\setminus X'\right|<\infty}$. 
    Then there is some~${Y'' \subseteq Y}$ 
    such that~$X$ is joinable to~${Y''}$ 
    and~${\left|Y'' \setminus Y'\right| \leq \left|X \setminus X'\right|}$.
\end{lem}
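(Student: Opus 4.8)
The plan is to reduce the statement to a joinability question in an auxiliary digraph and then derive a contradiction from Lemma~\ref{lem:incomp}. Fix witnesses $\mathcal{P}\in\mathfrak{J}_D(X,Y)$ and $\mathcal{Q}\in\mathfrak{J}_D(X',Y')$, and set $k:=|X\setminus X'|$. Form $D^{*}$ from $D$ by adding $k$ new sinks $z_1,\dots,z_k$, each with in-neighbourhood exactly $Y\setminus Y'$ and with no outgoing edges, and put $\widehat Y:=Y'\cup\{z_1,\dots,z_k\}$. The first claim is that it suffices to show that $X$ is joinable to $\widehat Y$ in $D^{*}$: given a witness $\widehat{\mathcal R}$, at most $k$ of its paths can end in a sink (they are pairwise disjoint), and truncating each such path at its penultimate vertex — which lies in $Y\setminus Y'$ and, by disjointness, on no other path of $\widehat{\mathcal R}$ — yields an $(X,Y)$-path-system in $D$ covering $X$ whose terminal set meets $Y\setminus Y'$ in at most $k$ vertices. (Here one uses that a vertex of $Y\setminus Y'$ has, in $D^{*}$, no outgoing edge other than those to the sinks, so it cannot be an interior vertex of a path of $\widehat{\mathcal R}$.)

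Suppose then, towards a contradiction, that $X$ is not joinable to $\widehat Y$ in $D^{*}$. Since $X'$ is joinable to $\widehat Y$ in $D^{*}$ (as $V^{+}(\mathcal Q)\subseteq Y'\subseteq\widehat Y$), fixing an enumeration $X\setminus X'=\{x_1,\dots,x_k\}$ and writing $X_i:=X'\cup\{x_1,\dots,x_i\}$ one may pick the least $j$ for which $X_j$ is joinable to $\widehat Y$ in $D^{*}$ but $X_{j+1}=X_j+x_{j+1}$ is not. Applying Lemma~\ref{lem:incomp} in $D^{*}$ to the pair $(X_{j+1},\widehat Y)$ and the vertex $x_{j+1}$ — the technical hypotheses hold, since vertices of $X$ have no ingoing edges and vertices of $\widehat Y$ have no outgoing edges in $D^{*}$ — produces a separation $\widehat S\in\mathfrak{S}_{D^{*}}(X_{j+1},\widehat Y)$ such that $X_j$ is incompressible to $\widehat S$ in $D^{*}$.

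Next I would arrange that $\widehat S\subseteq V(D)$. If some sink $z_i$ lies in $\widehat S$, then by incompressibility every $\mathcal M\in\mathfrak{J}_{D^{*}}(X_j,\widehat S)$ has a path ending at $z_i$, and the penultimate vertex $y_i\in Y\setminus Y'$ of that path is interior to it and hence avoids $\widehat S$; replacing $z_i$ by $y_i$ in $\widehat S$ (and truncating the corresponding path in a fixed such $\mathcal M$), one checks that the new set is again a separation, that $X_j$ remains incompressible to it, and that it still catches every $(X,Y)$-path of $D$ (paths of $D$ never reach a sink). Iterating over the finitely many sinks in $\widehat S$, we may assume $\widehat S\subseteq V(D)$ and that $\widehat S$ is an $(X,Y)$-separation in $D$. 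Now $X$ is joinable to $Y$ in $D$ via $\mathcal P$, so every path of $\mathcal P$ meets $\widehat S$; taking the initial segment of each up to its first vertex of $\widehat S$ exhibits $X$ as joinable to $\widehat S$ in $D$, hence in $D^{*}$. Restricting this path-system to the paths starting in $X_j$ gives a member of $\mathfrak{J}_{D^{*}}(X_j,\widehat S)$ covering $X_j$, which by incompressibility covers $\widehat S$; but then the path starting at $x_{j+1}$ also ends in $\widehat S$, at a vertex distinct — by pairwise disjointness — from all of those, contradicting that they already exhaust $\widehat S$. This contradiction shows $X$ is joinable to $\widehat Y$ in $D^{*}$, which by the first paragraph finishes the proof.

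The step I expect to be the main obstacle is the reduction "$\widehat S\subseteq V(D)$": one must verify carefully that the surgery replacing an auxiliary sink by one of its (possibly many) in-neighbours preserves both the separation property and, crucially, the incompressibility of $X_j$, which forces one to track exactly how $\mathfrak{J}_{D^{*}}(X_j,\cdot)$ changes under the replacement. Everything else should be routine bookkeeping with pairwise disjoint initial and terminal segments of the given path-systems.
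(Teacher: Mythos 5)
Your overall strategy (adding $k$ auxiliary sinks joined from $Y\setminus Y'$ and deriving a contradiction from Lemma~\ref{lem:incomp}) is genuinely different from the paper's proof, which simply reduces to the case $\left|X\setminus X'\right|=1$ and applies the augmenting walk lemma (Lemma~\ref{lem:augmentingwalk}) to a system in $\mathfrak{J}(X',Y')$: either the augmentation adds exactly one new terminal vertex, or one obtains a separation orthogonal to that system, in which case $X$ is already joinable to $Y'$ itself. However, your argument has a genuine gap exactly at the step you flagged: the surgery making $\widehat S\subseteq V(D)$. If a sink $z_i$ lies in $\widehat S$, you replace it by the penultimate vertex $y_i$ of one path of one fixed $\mathcal M\in\mathfrak{J}_{D^{*}}(X_j,\widehat S)$ and assert that the resulting set is again an $(X_{j+1},\widehat Y)$-separation. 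This is unjustified and in general false: $z_i$ has in-neighbourhood all of $Y\setminus Y'$ (possibly infinite), so an $(X_{j+1},\widehat Y)$-path may enter $z_i$ through an in-neighbour $y\neq y_i$ while meeting $\widehat S$ only at $z_i$; such a path avoids $(\widehat S-z_i)+y_i$. Incompressibility of $X_j$ to $\widehat S$ gives no control over which in-neighbour of $z_i$ such a path uses --- it is a statement about path-\emph{systems} starting at $X_j$, not about individual paths into $z_i$ --- and you also do not verify that $X_j$ stays incompressible to the modified set. Without the surgery, your endgame breaks down precisely in the case where a path of $\mathcal P$ ending in $Y\setminus Y'$ meets $\widehat S$ only at a sink: the truncated paths then do not lie in $D$, and two extended paths may share a sink, so the disjointness count at the end fails.

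A secondary inaccuracy: you claim that after the surgery $\widehat S$ catches every $(X,Y)$-path of $D$ and that all of $X$ is joinable to $\widehat S$; but $\widehat S$ only separates $\widehat Y$ from $X_{j+1}$, so nothing is known about the paths of $\mathcal P$ starting at $x_{j+2},\dots,x_k$. This does not actually hurt you, since the contradiction only uses the paths starting in $X_{j+1}$, but as stated the claim is not justified. The remainder of the proposal (the reduction via penultimate vertices, the choice of $j$, and the final incompressibility argument, assuming $\widehat S\subseteq V(D)$) is sound; the unresolved sink case is the missing piece, and it is not clear it can be repaired without essentially falling back on an augmenting-walk argument of the kind the paper uses directly.
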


\begin{proof}
    It is enough to prove the case where~${\left|X\setminus X'\right| = 1}$, since the general case then follows by applying this recursively. 
    
    Let~${\mathcal{P} \in \mathfrak{J}(X',Y')}$. 
    We apply Lemma~\ref{lem:augmentingwalk} to~$X$,~$Y$ and~$\mathcal{P}$. 
    If the first case of the lemma holds, then we augment to an ${(X,Y)}$-path-system~$\mathcal{P'}$ which witnesses 
    that~$X$ is joinable to~$Y''$ 
    such that~${Y'' = Y' \cup \{y\}}$ for some~${y \in Y\setminus Y'}$ and hence~${|Y'' \setminus Y'| = 1 = |X \setminus X'|}$. 
    Otherwise, there is some $(X,Y)$-separation~$S$ with~${S \orthogonal \mathcal{P}}$. 
    However, 
    since~$X$ is joinable to~$Y$ 
    and~$S$ is an $(X,Y)$-separation, it follows 
    that~$X$ is joinable to~$S$. 
    But then by extending an ${(X,S)}$-path-system 
    witnessing that~$X$ is joinable to~$S$ 
    with the terminal segments of~$\mathcal{P}$ from~$S$ to~$Y'$ we get an ${(X,Y)}$-path-system witnessing 
    that~$X$ is joinable to~$Y'$. 
\end{proof}

\begin{lem}
    \label{lem:incomp-finite-difference}
    Let~$D'$ be a subgraph of~$D$ obtained by deleting finitely many vertices of~$D$ and then deleting finitely many edges from the resulting digraph. If~$X$ is incompressible to~$Y$ in~${D'}$, then~$X$ was already incompressible to~$Y$ in~$D$.
\end{lem}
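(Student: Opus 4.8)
The plan is to argue by contradiction: from a hypothetical ``compression'' of $X$ into $Y$ inside $D$ I would produce, after discarding only finitely many paths, an \emph{almost}-compression inside $D'$, and then upgrade it to a genuine compression inside $D'$ by means of Lemma~\ref{lem:incomp-as-many-new}, contradicting incompressibility there. Write $D' = (D-A)-B$, where $A$ is a finite set of vertices and $B$ a finite set of edges; since $X,Y\subseteq V(D')$ we automatically have $A\cap(X\cup Y)=\emptyset$. Joinability of $X$ to $Y$ in $D$ is immediate from $D'\subseteq D$, so it remains to show that every $\mathcal{P}\in\mathfrak{J}_D(X,Y)$ has $V^{+}(\mathcal{P})=Y$. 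So I would assume some $\mathcal{P}\in\mathfrak{J}_D(X,Y)$ fails this and fix a vertex $y^{*}\in Y\setminus V^{+}(\mathcal{P})$.

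Next I would isolate the ``bad'' paths: let $\mathcal{P}_0\subseteq\mathcal{P}$ consist of those paths that meet $A$ or use an edge of $B$. As the paths in $\mathcal{P}$ are pairwise disjoint, each vertex of $A$ and each edge of $B$ lies on at most one of them, whence $|\mathcal{P}_0|\le|A|+|B|<\infty$. Setting $X':=X\setminus V^{-}(\mathcal{P}_0)$ and $Y':=V^{+}(\mathcal{P})\setminus V^{+}(\mathcal{P}_0)$, the subfamily $\mathcal{P}\setminus\mathcal{P}_0$ is an $(X',Y')$-path-system lying entirely inside $D'$ that covers $X'$, so $X'$ is joinable to $Y'$ in $D'$, with $|X\setminus X'|=|\mathcal{P}_0|<\infty$, with $Y'\subseteq Y$, and with $y^{*}\notin Y'$. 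Since $X$ is joinable to $Y$ in $D'$ by hypothesis, Lemma~\ref{lem:incomp-as-many-new}, applied inside $D'$, yields a set $Y''\subseteq Y$ with $X$ joinable to $Y''$ in $D'$ and $|Y''\setminus Y'|\le|X\setminus X'|=|V^{+}(\mathcal{P}_0)|$.

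Finally I would feed $Y''$ back into the incompressibility hypothesis. A path-system in $D'$ witnessing that $X$ is joinable to $Y''$ is also an element of $\mathfrak{J}_{D'}(X,Y)$, because its terminal vertices lie in $Y''\subseteq Y$ and — using the standing assumption that vertices of $Y$ have no outgoing edges — no path can pass through a vertex of $Y\setminus Y''$ internally; hence incompressibility of $X$ to $Y$ in $D'$ forces its terminal-vertex set to be all of $Y$, and since that set is contained in $Y''$ we get $Y''=Y$. Therefore $|Y\setminus Y'|\le|V^{+}(\mathcal{P}_0)|$. But $Y\setminus Y'$ is the disjoint union of $Y\setminus V^{+}(\mathcal{P})$ and $V^{+}(\mathcal{P}_0)$, and $|V^{+}(\mathcal{P}_0)|=|\mathcal{P}_0|$ is finite, so $Y\setminus V^{+}(\mathcal{P})=\emptyset$, contradicting $y^{*}\in Y\setminus V^{+}(\mathcal{P})$. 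The only delicate point — and the one I would write out most carefully — is the finiteness bookkeeping: that $\mathcal{P}_0$ is finite, that it therefore blocks only finitely many starting vertices in $X$ and only finitely many terminal vertices in $Y$, and that this finiteness is exactly what makes the concluding cardinality subtraction legitimate; everything else is a direct appeal to Lemma~\ref{lem:incomp-as-many-new} together with the standing degree assumptions on $X$ and $Y$.
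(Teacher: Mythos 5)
Your proposal is correct and follows essentially the same route as the paper's proof: discard the finitely many paths of $\mathcal{P}$ that meet the deleted vertices or edges, apply Lemma~\ref{lem:incomp-as-many-new} inside $D'$ to the resulting $X'$, $Y'$, and obtain a contradiction with incompressibility in $D'$ by finite counting. Your extra remarks (the disjoint-union bookkeeping and the observation that a system joining $X$ to $Y''$ is an $(X,Y)$-path-system since $Y$ has no outgoing edges) only make explicit what the paper leaves implicit.
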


\begin{proof}
    Suppose for a contradiction that there is some~${\mathcal{P} \in \mathfrak{J}_D(X,Y)}$ for which~${Y^* := V^{+}(\mathcal{P})}$ is a proper subset of~$Y$. 
    Since the paths in~$\mathcal{P}$ are vertex disjoint, 
    all but finitely many of the paths in~$\mathcal{P}$ are contained in~${D'}$. 
    Let~${\mathcal{P}'= \{ P \in \mathcal{P} \colon P \subseteq D' \}}$. 
    Then there is some~${Y' \subseteq Y^*}$ and~${X' \subseteq X}$ such that ${\mathcal{P}' \in \mathfrak{J}_{D'}(X',Y')}$ and 
    \[
        |Y^* \setminus Y'| = |X \setminus X'| = |\mathcal{P} \setminus \mathcal{P}'| =: k.
    \]
    Note in particular that~${|Y \setminus Y'| > |Y^* \setminus Y'|=k}$. 
    Since, by assumption~$X$ is joinable to~$Y$ in~$D'$, 
    by Lemma~\ref{lem:incomp-as-many-new} applied to~$X$,~$X'$,~$Y$ and~$Y'$ there is some~${Y'' \subseteq Y}$ 
    such that~$X$ is joinable to~$Y''$ in~$D'$ 
    and 
    \[
        |Y'' \setminus Y'| \leq |X \setminus X'| = k.
    \]
    However, then 
    \[
        |Y \setminus Y''| \geq |Y \setminus Y'| - |Y'' \setminus Y'| > 0,
    \]
    contradicting the fact that~$X$ is incompressible to~$Y$ in~${D'}$. 
\end{proof}

\begin{lem}\label{lem:incomp-remains}
    If there exists~${x \in X}$ and~${y \in Y}$ such that~${X - x}$ is incompressible to~${Y - y}$ in~$D$ 
    and~$X$ is joinable to~$Y$ in~$D$, 
    then~$X$ is incompressible to~$Y$ in~$D$.
\end{lem}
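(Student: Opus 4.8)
The plan is to argue by contradiction. Since $X$ is joinable to $Y$ by hypothesis, proving that $X$ is incompressible to $Y$ amounts to ruling out a path-system $\mathcal{P}\in\mathfrak{J}_D(X,Y)$ with $Y^\ast:=V^+(\mathcal{P})\subsetneq Y$; suppose such a $\mathcal{P}$ exists and fix $z\in Y\setminus Y^\ast$. Throughout, the goal of each step is to produce an $(X-x,Y-y)$-path-system that covers $X-x$ but whose terminal set is a \emph{proper} subset of $Y-y$, which contradicts the hypothesis that $X-x$ is incompressible to $Y-y$. Let $P_x\in\mathcal{P}$ be the path starting at $x$, with terminal vertex $b$. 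If $y\notin Y^\ast$ or $b=y$, then $\mathcal{P}-P_x$ is already such a forbidden system: it covers $X-x$, it is an $(X-x,Y-y)$-path-system since its terminal set $Y^\ast\setminus\{b\}$ lies in $Y-y$, and this set is a proper subset of $Y-y$ because $b\in Y^\ast$ together with $Y^\ast\subsetneq Y$. So we may assume $y\in Y^\ast$ and $b\neq y$; let $P^y\in\mathcal{P}$ be the path with terminal vertex $y$ and put $a:=V^-(P^y)$, noting $a\neq x$ (as $P^y\neq P_x$), hence $a\in X-x$.

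Set $\mathcal{P}':=\mathcal{P}-P^y$, an $(X,Y-y)$-path-system (its terminal set is $Y^\ast\setminus\{y\}\subseteq Y-y$) covering $X-a$, and apply the augmenting walk lemma (Lemma~\ref{lem:augmentingwalk}) to $X$, $Y-y$, $\mathcal{P}'$. In its first case we obtain an $(X,Y-y)$-path-system $\mathcal{Q}$ covering $X$ with terminal set $(Y^\ast\setminus\{y\})\cup\{y'\}$ for some $y'\in Y\setminus Y^\ast$; then $y\notin V^+(\mathcal{Q})\subsetneq Y$, and deleting from $\mathcal{Q}$ the path starting at $x$ yields a forbidden $(X-x,Y-y)$-path-system by the argument of the previous paragraph applied to $\mathcal{Q}$ in place of $\mathcal{P}$ — a contradiction. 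It therefore remains to rule out the second case: an $(X,Y-y)$-separation $S$ with $\mathcal{P}'\orthogonal S$.

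For this, note first that since the vertices of $X$ have no ingoing edges, no $(X-x,Y-y)$-path meets $x$, so $S$ is also an $(X-x,Y-y)$-separation. As $X-x$ is incompressible, hence joinable, to $Y-y$, pick $\mathcal{R}\in\mathfrak{J}_D(X-x,Y-y)$; incompressibility gives $V^+(\mathcal{R})=Y-y$, and every path of $\mathcal{R}$ meets $S$. Let $\mathcal{R}_1$ be the system of initial segments of the paths of $\mathcal{R}$ up to their first vertex in $S$ (an $(X,S)$-path-system covering $X-x$), and let $\mathcal{P}'_1$, $\mathcal{P}'_2$ be the initial segments of the paths of $\mathcal{P}'$ up to, respectively the terminal segments from, their unique vertex in $S$; thus $\mathcal{P}'_1$ is an $(X,S)$-path-system with $V^-(\mathcal{P}'_1)=X-a$ and $V^+(\mathcal{P}'_1)=S$, and $\mathcal{P}'_2$ is an $(S,Y^\ast\setminus\{y\})$-path-system with $V^-(\mathcal{P}'_2)=S$. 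Apply Pym's theorem (Theorem~\ref{thm:pym}) to the $(X,S)$-path-systems $\mathcal{R}_1$ and $\mathcal{P}'_1$ to get an $(X,S)$-path-system $\mathcal{A}$ with $V^-(\mathcal{A})\supseteq X-x$ and $V^+(\mathcal{A})\supseteq S$, hence $V^+(\mathcal{A})=S$. Now concatenate each path of $\mathcal{A}$ with the path of $\mathcal{P}'_2$ issuing from its terminal vertex in $S$: the paths of $\mathcal{A}$ are internally disjoint from $S$, and no path of $\mathcal{A}$ can meet a path of $\mathcal{P}'_2$ at a vertex outside $S$, since such a common vertex would splice into an $(X,Y-y)$-path avoiding $S$; this also forces the concatenated paths to be internally disjoint from $Y$. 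The resulting path-system covers $V^-(\mathcal{A})\supseteq X-x$ and has terminal set $V^+(\mathcal{P}'_2)=Y^\ast\setminus\{y\}$, so restricting it to the paths starting in $X-x$ produces an $(X-x,Y-y)$-path-system covering $X-x$ with terminal set contained in $Y^\ast\setminus\{y\}$, a proper subset of $Y-y$ because $z\in Y-y$ but $z\notin Y^\ast$ — contradicting the incompressibility of $X-x$ to $Y-y$.

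The step I expect to be the main obstacle is this last one. The subtle points are verifying that the concatenation genuinely forms a path-system and that its paths are internally disjoint from $X\cup Y$ (both following from $S$ being a separation, but requiring a careful write-up), and in particular coping with the possibility that $S$ meets $X$ or $Y$, in which case some of the segments $\mathcal{R}_1,\mathcal{P}'_1,\mathcal{P}'_2$ degenerate to single vertices; handling this cleanly may require first reducing to the case $S\cap(X\cup Y)=\emptyset$, for instance by splitting $S$ into its parts in $X$, in $Y$, and elsewhere and routing each through the argument separately.
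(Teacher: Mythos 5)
Your proof is correct, but it takes a different route from the paper's. The paper argues as follows: given $\mathcal{P}\in\mathfrak{J}(X,Y)$ with $V^{+}(\mathcal{P})\subsetneq Y$, some $Q\in\mathcal{P}$ other than the path $P$ starting at $x$ must end at $y$ (else $\mathcal{P}-P$ already contradicts incompressibility), and deleting \emph{both} $P$ and $Q$ leaves a system with deficiency $1$ on the $X$-side but at least $2$ on the $Y$-side; one application of Lemma~\ref{lem:incomp-as-many-new} to $X-x$, $Y-y$ then gives the contradiction in a few lines. You instead delete only the path ending at $y$ and re-run the underlying machinery by hand: the augmenting walk lemma (Lemma~\ref{lem:augmentingwalk}) applied to $X$, $Y-y$, and in the separation case a rerouting across the orthogonal separator $S$ via Pym's theorem (Theorem~\ref{thm:pym}). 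In effect you have inlined (and re-proved, in the special case you need) the content of Lemma~\ref{lem:incomp-as-many-new} rather than citing it; this makes your argument self-contained and avoids the paper's small counting step $|(Y-y)\setminus Y'|>1$, at the cost of length. Two remarks on your separation case: the appeal to Pym is not actually needed --- since $S\subseteq V(\mathcal{P}')$ and each path of $\mathcal{P}'$ meets $S$ exactly once, you can simply extend the initial segments of $\mathcal{R}$ up to their first vertex of $S$ by the corresponding terminal segments of $\mathcal{P}'$, which is exactly how the paper handles the analogous step inside Lemma~\ref{lem:incomp-as-many-new}; and the degenerate situations you flag at the end ($S$ meeting $X$ or $Y$) are harmless under the standing assumption that vertices of $X$ have no ingoing edges and vertices of $Y$ no outgoing edges: for instance, any path of your system $\mathcal{A}$ whose initial vertex lies in $S\cap X$ must be a trivial one-vertex path (a nontrivial one would share that vertex with the trivial path of $\mathcal{A}$ ending there, since $V^{+}(\mathcal{A})=S$ and no nontrivial path can end in $X$), so your splicing argument goes through and no separate reduction to $S\cap(X\cup Y)=\emptyset$ is required.
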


\begin{proof}
    Assume, for a contradiction, that there is some ${\mathcal{P} \in \mathfrak{J}(X,Y)}$ with~${Y \setminus V^{+}(\mathcal{P}) \neq \emptyset}$. 
    Let~${P \in \mathcal{P}}$ be the path starting in~$x$. 
    Since~${X - x}$ is incompressible to~${Y-y}$ there is some~${Q \in \mathcal{P}}$ with~${P \neq Q}$ which ends in~$y$. 
    Let~${\mathcal{P}' = \mathcal{P} \setminus \{ P, Q \}}$. 

    Then~${\mathcal{P}' \in \mathfrak{J}(X',Y')}$ for some~${X' \subseteq X-x}$ and~${Y' \subseteq Y-y}$ with~${|(X-x) \setminus X'| = 1}$ 
    and ${|(Y-y) \setminus Y'| > 1}$. 
    However, then by Lemma~\ref{lem:incomp-as-many-new} applied to~${(X-x)}$,~$X'$,~$(Y-y)$ and~$Y'$ there is some ${Y'' \subsetneq (Y-y)}$
    such that~${(X-x)}$ is joinable to~$Y''$ contradicting the fact that~${X - x}$ is incompressible to~${Y - y}$ in~$D$.
\end{proof}

\begin{definition}
    \label{def:finitelyextendable}
    Let~$D$ be a digraph and let~${X,Y \subseteq V(D)}$. 
    A subset~${X' \subseteq X}$ is \emph{$(X,Y)$-finitely extendable} in~$D$ 
    if any~${O \subseteq X}$ containing~$X'$ with~${|O \setminus X'| < \aleph_0}$ is joinable to~$Y$.
\end{definition}
A natural question is whether this implies 
that~$O$ is joinable to~$Y$
for some~${ X'\subseteq O \subseteq X}$ with~${|O \setminus X'| = \aleph_0}$. 
We will actually need a stronger statement, given a countable collection of infinite subsets of~${X \setminus X'}$ we want to find such an~$O$ which meets all of them. 

To do so we will need to establish the following lemma first.

\begin{lem}
    \label{lem:incomp-delete-one}
    Let ${X' \subseteq X}$ such that~$X'$ is $(X,Y)$-finitely extendable. Suppose that for some finite~${U\subseteq V(D)\setminus X'}$ the set~$X'$ is joinable to~${Y \setminus U}$ in~${D-U}$. 
    Then~$U$ can be extended by at most~${\left| U \right|}$ many new vertices from~${X \setminus X'}$ to a set~$W$ such that~$X'$ is ${(X \setminus W , Y \setminus W)}$-finitely extendable in~${D - W}$. 
\end{lem}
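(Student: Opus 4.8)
Proof proposal for Lemma 5.10 (`lem:incomp-delete-one`).

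The plan is to induct on $|U|$, reducing everything to the case $|U|=1$. For the inductive step one picks $u\in U$, first applies the base case to obtain a set $W_1$ with $u\in W_1$, $W_1\setminus\{u\}\subseteq X\setminus X'$ and $|W_1\setminus\{u\}|\le 1$ such that $X'$ is $(X\setminus W_1,Y\setminus W_1)$-finitely extendable in $D-W_1$, and then re-verifies the hypothesis of the lemma for the digraph $D-W_1$ and the set $U\setminus W_1$ (of size $\le|U|-1$): here one uses the standing convention that vertices of $X$ have no ingoing edges, so deleting vertices of $X\setminus X'$ leaves every $(X',\,\cdot\,)$-path-system intact, together with the harmless reduction to $X\cap Y=\emptyset$, so that the witnessing path-system from the original hypothesis survives in $D-W_1$ and lands in the right set. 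Applying the induction hypothesis then produces a set which, combined with $W_1$, is the desired $W$, the two budgets $\le 1$ and $\le|U|-1$ adding up to $|U|$.

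For the base case $U=\{u\}$, the case $u\in X$ is immediate: a vertex of $X$ lies on no $(O,Y)$-path at all, so finite extendability in $D$ passes to $D-u$ and $W=\{u\}$ works. So assume $u\notin X$. If $X'$ is already $(X\setminus u,Y\setminus u)$-finitely extendable in $D-u$ we are done with $W=\{u\}$; otherwise fix a finite $O$ with $X'\subseteq O\subseteq X\setminus u$ that is not joinable to $Y\setminus u$ in $D-u$. Running the augmenting walk lemma (Lemma~\ref{lem:augmentingwalk}) in $D-u$ from a path-system covering $X'$ toward $(O,Y\setminus u)$ and invoking Lemma~\ref{lem:incomp} produces a finite-difference set $O^{*}$ with $X'\subseteq O^{*}\subsetneq O$ and a tight Erd\H{o}s--Menger separation $S$ to which $O^{*}$ is incompressible in $D-u$ and which separates a vertex $x\in O\setminus O^{*}$ from $Y\setminus u$. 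Adding $u$ back, $S\cup\{u\}$ is an $(O^{*}+x,Y)$-separation in $D$; since $O^{*}+x$ is joinable to $Y$ in $D$ by finite extendability, a transfer-of-cardinality argument through Lemmas~\ref{lem:incomp-as-many-new}, \ref{lem:incomp-finite-difference} and~\ref{lem:incomp-remains} forces the resulting $(O^{*}+x,S\cup\{u\})$-path-system to land bijectively on $S\cup\{u\}$, so exactly one of its paths touches $u$. Its source singles out a ``critical'' vertex $w\in X\setminus X'$, and the claim is that $W:=\{u,w\}$ works: for any finite $O'$ with $X'\subseteq O'\subseteq X\setminus W$, apply finite extendability in $D$ to $O'\cup\{w\}$; the $w$-path of the resulting system is, again by incompressibility to the tight separation, forced to run through $u$, so all the other paths — being vertex-disjoint from it — avoid $u$ and cover $O'$, witnessing that $O'$ is joinable to $Y\setminus u=Y\setminus W$ in $D-W$ (deletion of $w$ being harmless since $w$ has no ingoing edges).

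The main obstacle is precisely the proof that a single critical vertex always suffices: one must show that the set $B$ of those $x\in X\setminus X'$ all of whose tight-separation-avoiding paths to $Y$ run through $u$ has $|B|\le 1$ — two such vertices would force two disjoint paths through $u$, but extracting this cleanly requires incompressibility to pin the relevant paths off the tight separation — and, when $B=\emptyset$, that finite extendability in $D$ together with joinability of $X'$ to $Y\setminus u$ in $D-u$ already restores finite extendability in $D-u$. The subtlety is that for infinite $X'$ the naive counting ``$|O|\le|S|+|U|$'' carries no information, so the whole argument has to be run through the incompressibility framework of Section~\ref{sec:incompressibility} (in particular Lemmas~\ref{lem:incomp}, \ref{lem:incomp-as-many-new}, \ref{lem:incomp-finite-difference} and~\ref{lem:incomp-remains}), routing all the finitely many vertices of an arbitrary $O'\setminus X'$ simultaneously and disjointly from an $X'$-system that saturates the tight separation; this bookkeeping, rather than any single slick observation, is where the real work lies.
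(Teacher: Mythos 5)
Your skeleton is the paper's: recursion on $|U|$, the trivial case $u \in X$, reduction to a single vertex $x \in X\setminus X'$ such that (after absorbing finitely many vertices into $X'$) $X'$ is joinable to $Y\setminus u$ in $D-u$ but $X'+x$ is not, the tight separation $S$ from Lemma~\ref{lem:incomp}, and the lifting of incompressibility of $X'+x$ to $S+u$ in $D$ via Lemmas~\ref{lem:incomp-finite-difference} and~\ref{lem:incomp-remains}. The gap lies in your choice of the second deleted vertex and in the final verification. You define $w$ as the source of the path meeting $u$ in one chosen $(X'+x,\,S+u)$-path-system; but incompressibility only says that every such system covers $S+u$, it does not determine \emph{which} source's path passes through $u$, and that source may well lie in $X'$ — in which case $W=\{u,w\}$ is not of the required form and even deletes part of $X'$. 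A concrete example: $X=\{a,x\}$, $X'=\{a\}$, $Y=\{y_1,y_2\}$, edges $a\to s\to y_1$, $a\to u\to y_2$, $x\to s$. Here $S=\{s\}$, and the unique system joining $\{a,x\}$ to $\{s,u\}$ routes the path from $a$ through $u$, so your $w=a\in X'$; the correct choice (as in the paper) is $w=x$, the vertex whose addition destroys joinability in $D-u$, independently of which path happens to use $u$.

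Even with $w=x$, your verification fails as stated: for finite $O'$ with $X'\subseteq O'\subseteq X\setminus W$ and $\mathcal{P}\in\mathfrak{J}_D(O'+x,Y)$, there is no reason the $x$-path is ``forced to run through $u$'' — in the example above it is the $a$-path that uses $u$ — so deleting the $x$-path does not leave a system inside $D-u$. What incompressibility actually yields is weaker but sufficient: the subfamily $\mathcal{P}'$ of paths starting in $X'+x$ is orthogonal to $S+u$, hence the paths starting in $O'\setminus X'$ avoid $S+u$ and every vertex separated from $Y$ by $S+u$. One must then discard \emph{all} of $\mathcal{P}'$ and re-route $X'$ inside $D-u$: take a system joining $X'$ to $S$ in $D-u$ (it exists since $X'$ is joinable to $Y\setminus u$ there and $S$ separates), and prolong its paths along the terminal segments of the paths of $\mathcal{P}'$ beyond $S$; orthogonality plus the separation property guarantee these new paths avoid $u$ and are disjoint from the kept paths. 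This re-routing is the heart of the paper's argument and is absent from your proposal; your closing paragraph correctly identifies the difficulty, but the mechanism you offer to resolve it (the deleted path absorbing $u$) is not valid.
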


\begin{proof}
    It is enough to prove the special case where~${U = \{v\}}$ for some~${v \in V(D)}$, the general case follows by applying it to the vertices in~$U$ one by one recursively. 
    
    If~${v \in X}$, then ${W := \{ v \}}$ is suitable. 
    Indeed, since the vertices in~$X$ have no ingoing edges, the deletion of~$v$ cannot ruin anything in this case. 
    So, let us suppose that~${v \notin X}$. 
    We may assume that there is a set 
    ${\{ x_i \colon 0 \leq i \leq \ell\} \subseteq X \setminus X'}$ 
    for which
    ${X' \cup \{ x_i \colon 0 \leq i \leq \ell \}}$ 
    is not joinable to~${Y - v}$ in~${D - v}$ 
    but~${X' \cup \{ x_i \colon 0 \leq i < \ell \}}$ 
    is joinable to~${Y - v}$ in~${D - v}$. 
    Without loss of generality we can assume that~${\ell = 0}$ since otherwise we replace~$X'$ 
    with~${X' \cup \{ x_i \colon 0 \leq i < \ell \}}$. 
    In the light of this we write simply~$x$ instead of~$x_0$ and show that~${W := \{v, x \}}$ satisfies the conclusion of the lemma.

    By Lemma~\ref{lem:incomp} there is an~${S \in \mathfrak{S}_{D-v}(X',Y-v)}$ such that~${X'}$ is incompressible to~$S$ in~${D - v}$. 
    However, then by Lemma~\ref{lem:incomp-finite-difference}, ${X'}$ is incompressible to~$S$ in~$D$ as well. 
    Note that~${S + v}$ separates~$Y$ from~$X'$ in~$D$ 
    and by assumption~${X' + x}$ is joinable to~$Y$ in~$D$. 
    Hence,~${X' + x}$ is joinable to~${S + v}$ in~$D$. 
    Then, by Lemma~\ref{lem:incomp-remains} applied to~${x \in X' + x}$ and~${v \in S + v}$ we can conclude that~${X' + x}$ is also incompressible to~${S + v}$ in~$D$. 
    
    Next, we show 
    that~$O$ is joinable to~$Y-v$ in~${D-v}$ 
    for any ${X' \subseteq O \subseteq X-x}$ 
    for which~${|O \setminus X'| < \aleph_0}$. 
    Let an arbitrary such~$O$ be fixed (see Figure~\ref{f:joinOtoY}). 
    Since~$X'$ is $(X,Y)$-finitely extendable in~$D$, there is some set of paths 
    ${\mathcal{P} \in \mathfrak{J}_D(O+x,Y)}$.
    \begin{figure}[htbp]
        \centering
        \begin{tikzpicture}
            
            \draw (3.2,3) rectangle (4.4,-1.4);
            \node at (3.8,3.2) {$Y$};
            
            \draw (-2.4,3) node (v1) {} rectangle (-1,-1.4);
            \draw (0.6,3) rectangle (1.4,1);
            
            \draw (v1) rectangle (-1,1);
            \node at (1,3.2) {$S+v$};
            
            \node at (-1.8,3.2) {$O+x$};
            \node (v3) at (1,2.8) {$v$};
            \node (v2) at (-1.8,2.6) {$x$};
            \node at (-3.1,2) {$X'+x$};
            \node at (1.3,-1.7) {${\mathcal{P} \setminus \mathcal {P}'}$};
            \node at (-0.4,2.4) {$\mathcal{Q}$};
        
            \draw [-triangle 60, dashed] plot[smooth, tension=.7] coordinates {(1.2,2.8) (2.2,2.4) (3.6,2.6)};
            
            \draw [-triangle 60] plot[smooth, tension=.7] coordinates {(-1.8,2.2) (-0.4,2) (1,2.2)};
            \draw [-triangle 60] plot[smooth, tension=.7] coordinates {(-1.8,1.8) (-0.2,1.6) (1,1.8)};
            \draw [-triangle 60] plot[smooth, tension=.7] coordinates {(-1.8,1.4) (-0.2,1.2) (1,1.4)};
        
            \draw [-triangle 60, dashed] plot[smooth, tension=.7] coordinates {(1,2.2) (1.8,2.2) (2.4,2) (3.6,1.8)};
            \draw [-triangle 60, dashed] plot[smooth, tension=.7] coordinates {(1,1.8) (2.2,1.6) (3.6,1.4)};
            \draw [-triangle 60, dashed] plot[smooth, tension=.7] coordinates {(1,1.4) (2.2,1) (3.6,1)};
            \draw [-triangle 60, dashed] plot[smooth, tension=.7] coordinates {(-1.8,0.4) (0.6,-0.2) (2.2,0.4) (3.6,0.4)};
            \draw [-triangle 60, dashed] plot[smooth, tension=.7] coordinates {(-1.8,-0.2) (0.4,-0.8) (1.6,-0.4) (3.6,-0.4)};
            \draw [-triangle 60, dashed] plot[smooth, tension=.7] coordinates {(-1.8,-0.8) (0.4,-1.4) (2.2,-1) (3.6,-1)};
        
        \end{tikzpicture}
        
        \caption{The construction of a path-system that joins $O$ to $Y$ in $D-v$.}\label{f:joinOtoY}
    \end{figure}
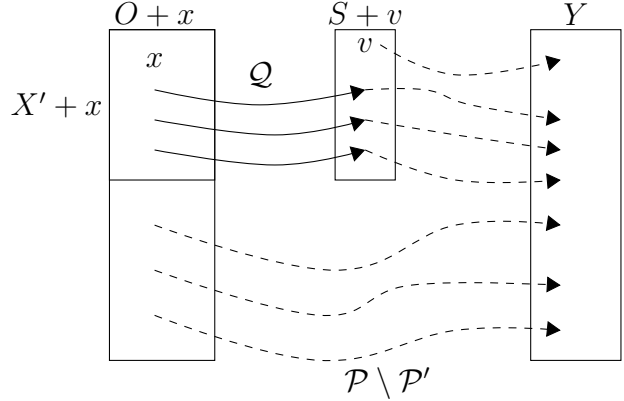
    
    However, since~$X'+x$ is incompressible to~${S + v}$, and~${S + v}$ separates~$Y$ from~$X'+x$, the subset of paths starting at vertices of~$X'+x$
    \[
        \mathcal{P}' = \{ P \in \mathcal{P} \colon V(P) \cap (X'+x) \neq \emptyset \}
    \]
    is orthogonal to~${S + v}$.
    
    In particular, no path in ${\mathcal{P} \setminus \mathcal {P}'}$ meets~${S + v}$, and therefore none of these paths meets any vertex from which~$Y$ is separated by~${S+v}$ in~$D$.
    
    Let ${\mathcal{Q} \in \mathfrak{J}_{D-v}(X', S)}$. 
    By the above comment, no path in~$\mathcal{Q}$ meets any path in~${\mathcal{P} \setminus \mathcal {P}'}$ 
    and so we can form a set of ${(X',Y)}$-paths disjoint from the paths in~${\mathcal{P} \setminus \mathcal {P}'}$ 
    by extending each path in~$\mathcal{Q}$ by the terminal segment of a path in~$\mathcal{P}'$ after~$S$.
    
    This set of paths together with~${\mathcal{P} \setminus \mathcal {P}'}$ witness 
    that~$O$ is joinable to~${Y - v}$ in~${D - v}$. 
\end{proof}

Using this we can show the following lemma.

\begin{lem}
    \label{lem:incomp-infinite-intersection}
    Assume that a countable~${X' \subseteq X}$ is ${(X,Y)}$-finitely extendable in~$D$. 
    Then for every family~${(V_i \colon i \in \mathbb{N})}$ of infinite subsets of~${X \setminus X'}$ 
    there is some~$O$ with~${X' \subseteq O \subseteq X}$ 
    such that~$O$ is joinable to~$Y$ in~$D$
    and~${O \cap V_i \neq \emptyset}$ for all~${i \in \mathbb{N}}$.
\end{lem}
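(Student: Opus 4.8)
The plan is to build $O$ as an increasing union $O = \bigcup_n O_n$, together with a decreasing sequence of "error sets" that keep track of vertices we have had to sacrifice in order to maintain finite extendability, proceeding by a bookkeeping argument that at step $n$ makes sure $O$ meets $V_n$. Concretely, I would maintain at stage $n$ a finite set $W_n \subseteq V(D) \setminus X'$ and a finite set $O_n$ with $X' \subseteq X' \cup O_n \subseteq X$ such that $X'$ is $(X \setminus W_n, Y \setminus W_n)$-finitely extendable in $D - W_n$, and such that $X' \cup O_n$ avoids $W_n$. We start with $O_0 = \emptyset$ and $W_0 = \emptyset$; finite extendability of $X'$ in $D$ is the hypothesis.

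At step $n+1$, since $V_{n+1}$ is infinite and $W_n$ is finite, pick a vertex $x_{n+1} \in V_{n+1} \setminus (W_n \cup O_n)$. Because $X'$ is $(X \setminus W_n, Y \setminus W_n)$-finitely extendable in $D - W_n$ and $x_{n+1}$ lies in $X \setminus W_n$, the set $X' \cup O_n \cup \{x_{n+1}\}$ is joinable to $Y \setminus W_n$ in $D - W_n$; set $O_{n+1} := O_n \cup \{x_{n+1}\}$. The point of joinability is that it lets us apply Lemma~\ref{lem:incomp-delete-one} with $U := O_{n+1}$ (a finite set of vertices in $X \setminus X'$, all of which $X'$ is now joinable past), obtaining a finite $W_{n+1} \supseteq W_n$ with $|W_{n+1} \setminus W_n| \leq |O_{n+1}|$ such that $X'$ is $(X \setminus W_{n+1}, Y \setminus W_{n+1})$-finitely extendable in $D - W_{n+1}$. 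One should take a little care that $O_{n+1}$ itself is kept disjoint from $W_{n+1}$: since the new vertices added to $W$ by Lemma~\ref{lem:incomp-delete-one} come from $X \setminus X'$, they could in principle collide with $O_{n+1}$; however, inspecting the proof of that lemma, each vertex $v \in U$ either already lies in $X$ — in which case it is simply put into $W$ and removed from consideration, which is fine since we may as well absorb such $v$ into $W$ — or it is not in $X$ and the new vertex $x$ it spawns is chosen from $X \setminus (X' \cup O_{n+1})$. So I would phrase the recursion so that the finitely-extendable ground set is $X'$ throughout and $O_n$ is tracked separately, and whenever Lemma~\ref{lem:incomp-delete-one} wants to delete a vertex of $O_n$ we instead note that vertex was a previously-chosen $x_i \in X$, and deleting it from the digraph does no harm because vertices of $X$ have no ingoing edges, so we can just move it out of $O_n$ into $W$ — except that this would lose the property $O \cap V_i \neq \emptyset$.

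To avoid that last pitfall, the cleaner route is: at step $n+1$ do not just add one vertex of $V_{n+1}$, but first run Lemma~\ref{lem:incomp-delete-one} on $U := O_n \cup \{x_{n+1}\}$ to get $W_{n+1}$, and only then commit to $x_{n+1}$ — but choose $x_{n+1}$ from $V_{n+1}$ minus the (finite, already determined) bad set. Since Lemma~\ref{lem:incomp-delete-one} applied to $U = O_n \cup \{x\}$ produces at most $|O_n| + 1$ new deleted vertices, and these depend on $x$, one cannot quite choose $x$ "after" — so instead I would iterate a harmless preprocessing: repeatedly apply Lemma~\ref{lem:incomp-delete-one} to the current single new candidate, and observe that the process of replacing a non-$X$ vertex by an $X$-vertex $x$ and deleting both terminates (each application deletes at least the offending vertex $v \notin X$, and there are only finitely many vertices involved at any stage because $U$ is finite), at which point all vertices of $O_{n+1}$ that remain are in $X$, deletion-safe, and $V_{n+1}$ has been met by the chosen $x_{n+1}$ (which we keep in $O_{n+1}$, not in $W_{n+1}$, precisely because $x_{n+1} \in X$ means keeping it in $O$ rather than $W$ costs nothing). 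Finally set $O := X' \cup \bigcup_n O_n$. Each initial segment $X' \cup O_n$ is joinable to $Y$ in $D$ (restricting any witnessing path-system in $D - W_n$ to $D$), and by construction $O \cap V_i \supseteq \{x_i\} \neq \emptyset$; joinability of the full $O$ follows because $O \setminus X'$ is countable and at each stage the $(X \setminus W_n, Y \setminus W_n)$-finite-extendability of $X'$ lets us join $X' \cup O_n$ — here I would invoke a direct limit / compactness step, or simply note that the path-systems can be chosen nested since Lemma~\ref{lem:incomp-delete-one}'s output keeps the earlier structure intact.

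The main obstacle, and the part needing the most care, is exactly this clash between the vertices that Lemma~\ref{lem:incomp-delete-one} forces us to delete and the vertices of $V_n$ we are trying to keep: one must arrange the recursion so that the only vertices ever pushed into the "deleted" set $W$ (beyond those we freely chose to delete) are either in $X$ — hence deletion-safe because $X$-vertices have no ingoing edges, so their removal never destroys finite extendability or joinability — or genuinely irrelevant, while the representatives $x_n \in V_n$ stay in $O$. Getting a clean invariant that survives the recursion, and checking that joinability of the countable union $O$ really does follow from finite extendability at every finite stage (this is where countability of $X'$ and of $O \setminus X'$ is used), is where the real work lies; the rest is assembling Lemmas~\ref{lem:incomp},~\ref{lem:incomp-finite-difference},~\ref{lem:incomp-remains} and~\ref{lem:incomp-delete-one} in the right order.
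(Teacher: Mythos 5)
Your bookkeeping framework (finite deleted sets $W_n$, an invariant that $X'$ stays finitely extendable in $D-W_n$, and representatives of the $V_n$ chosen outside the finite bad sets) is the right kind of skeleton, and your use of Lemma~\ref{lem:incomp-delete-one} to restore the invariant after a deletion is exactly the intended role of that lemma. But the proposal has a genuine gap at the limit step. You only ever record that $X'\cup O_n$ is joinable to $Y$ at each finite stage and that $X'$ is finitely extendable in $D-W_n$; you never fix any paths, and in particular you never route paths from the (countably infinitely many) vertices of $X'$ itself. The final claim that joinability of $O=X'\cup\bigcup_n O_n$ ``follows by a direct limit / compactness step'' is precisely what cannot be taken for granted: joinability of an infinite set is not a finitary property, so stage-wise joinability of the finite approximations does not pass to the union. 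Indeed, example~(a) of Figure~\ref{f:ex-incomp} (as noted in its caption) is a digraph in which every $X'$ with $X\setminus X'$ infinite is $(X,Y)$-finitely extendable while $X$ itself is not joinable to $Y$; so any argument whose only output is ``each finite extension is joinable'' proves nothing about the limit set, and the phrase ``the path-systems can be chosen nested'' is unsupported by your construction, which contains no mechanism forcing later witnesses to extend earlier ones.

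The paper closes exactly this hole by building the witnessing path-system one whole path at a time: at step $n$ it chooses the next vertex $x$ to serve (alternating between the enumeration of $X'$ and representatives of the $V_k$), fixes an actual $(x,Y)$-path $P_n$ inside the current digraph $D_n$, and then applies Lemma~\ref{lem:incomp-delete-one} with $U=V(P_n)$ (not with a set of $X$-vertices) to obtain a finite $W\supseteq V(P_n)$ with $W\cap X'\subseteq\{x\}$ whose deletion keeps the unserved part of $X'$ finitely extendable in $D_{n+1}=D_n-W$. Disjointness of the paths is then automatic because each later path lives in a digraph from which all earlier paths have been deleted, and the limit object is simply the union of these permanently fixed paths, so $O=V^-(\mathcal{P})$ is joinable by construction. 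Your variant, which applies Lemma~\ref{lem:incomp-delete-one} to $U=O_{n+1}\subseteq X$ and defers all path-finding to the end, cannot be repaired by the ``$X$-vertices are deletion-safe'' remark; to fix it you would have to commit to concrete disjoint paths at each stage (including paths for the elements of $X'$, interleaved with the $V_k$-steps, protected by the condition $W\cap X'\subseteq\{x\}$), which is essentially the paper's proof.
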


\begin{proof}
    We build the desired ${(X,Y)}$-path-system by recursion. 
    The key to accomplish each step is the following claim.
    
    \begin{claim}
        \label{claim:intersect-all-inf}
        For every~${x \in X}$ there exists an ${(x, Y)}$-path~$P$ 
        for which there is a finite vertex set~${W \supseteq V(P)}$ with~${W \cap X' \subseteq \{ x \}}$ 
        such that the deletion of~$W$ preserves the conditions of Lemma~\ref{lem:incomp-infinite-intersection} for the remaining system, i.e., 
        ${(X' \setminus W)}$ is ${(X \setminus W, Y\setminus W)}$-finitely extendable in~${D - W}$. 
    \end{claim}
    
    \begin{proof}
        Since~${X' \subseteq X}$ is ${(X,Y)}$-finitely extendable in~$D$ we can pick a ${\mathcal{P} \in \mathfrak{J}(X'+x, Y)}$. 
        Let~$P$ be the unique element of~$\mathcal{P}$ with first vertex~$x$. 
        We obtain~$W$ by applying Lemma~\ref{lem:incomp-delete-one} with~${U = V(P)}$ and~$X'$ if~${x \notin X'}$. 
        If~${x \in X'}$, then we apply Lemma~\ref{lem:incomp-delete-one} with~${V(P)}$ and~${X' - x}$.
    \end{proof}
    
    First we assume that~${|X'| = \aleph_0}$, and we enumerate~${X' = \{ x_k \colon k \in \mathbb{N} \}}$. 
    We will build an ${(X,Y)}$-path-system ${\mathcal{P} = (P_i \colon i\in \mathbb{N})}$ 
    where~$P_{2k}$ starts at~$x_k$ and~$P_{2k+1}$ starts in~$V_k$. 
    We also will maintain a system~$D_n$,~$X_n$,~$Y_n$ obtained from
    $D$,~$X$,~$Y$ by the deletion of a finite vertex set containing~${\bigcup_{i<n}V(P_i)}$. 
    We will also demand that if~${n \leq 2k}$, then~${x_k \in X_n}$, and, moreover, that~${X' \cap X_n}$ is ${(X_n, Y_n)}$-finitely extendable in~$D_n$. 
    Then~${O := V^-(\mathcal{P})}$ will satisfy the Lemma~\ref{lem:incomp-infinite-intersection}.
    
    At the beginning we take ${D_0 := D}$,~${X_0 := X}$,~${Y_0 := Y}$ and we do not have any paths defined. 
    In step~$n$ we apply Claim~\ref{claim:intersect-all-inf} with~$D_n$,~$X_n$,~$Y_n$ and~${X' \cap X_n}$ choosing~$x$ to be~$x_{2k}$ if~${n = 2k}$ and an arbitrary element of~${V_k \cap X_n}$ if~${n = 2k+1}$. 
    This yields the desired path~$P_n$. 
    The triple $D_{n+1}$, $X_{n+1}$, $Y_{n+1}$ is obtained by the deletion of~$V(P_n)$ together with the finitely many extra vertices given by Claim~\ref{claim:intersect-all-inf}.
    
    For a finite~$X'$ we proceed similarly except only~${\left| X' \right|}$ many steps are devoted to join~$X'$ to~$Y$.
\end{proof}

\subsection{Applications of incompressibility}
\label{consequences}
\ 

For this subsection, 
let~$D$ be a digraph rooted in~$r$. 

\begin{cor}
    \label{cor:extend-intersect-family}
    Let~${I \in \mathcal{G}_D(w)}$ and suppose that~${J \in \mathcal{G}_D(w)}$ whenever~${I \subseteq J \subseteq \ingoing_D(w)}$ with~${J \setminus I}$ finite.
    Then for every countable family~$\mathcal{I}$ of infinite subsets of~${\ingoing_D(w) \setminus I}$, there is a~${I_{0}^{*} \in \mathcal{G}_D(w)}$ such that~${I_{0}^{*} \supseteq I}$ and~$I_0^{*}$ intersects every element of~$\mathcal{I}$.
\end{cor}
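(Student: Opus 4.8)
The plan is to re-encode the set system $\mathcal{G}_D(w)$ as a joinability relation and then apply Lemma~\ref{lem:incomp-infinite-intersection}. First I would build an auxiliary digraph $\hat{D}$ from $D$: subdivide each edge $f \in \outgoing_D(r)$ by a new vertex $r_f$, then subdivide each edge $e \in \ingoing_D(w)$ by a new vertex $w_e$ (doing the out-edges of $r$ before the in-edges of $w$, which disposes of a possible edge $rw$ consistently), delete $r$ and $w$, and finally reverse every edge. In $\hat{D}$ put $X := \{ w_e \colon e \in \ingoing_D(w) \}$ and $Y := \{ r_f \colon f \in \outgoing_D(r) \}$; by construction the vertices of $X$ have no ingoing edges and those of $Y$ have no outgoing edges in $\hat{D}$, so the standing assumption of Section~\ref{sec:incompressibility} holds, and $X \cap Y = \emptyset$. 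Let $\phi \colon \ingoing_D(w) \to X$ be the bijection $e \mapsto w_e$. The heart of the reduction is the equivalence that, for every $O \subseteq \ingoing_D(w)$, one has $O \in \mathcal{G}_D(w)$ if and only if $\phi(O)$ is joinable to $Y$ in $\hat{D}$: an internally disjoint $(r,w)$-path-system $\mathcal{P}$ with $E^{+}(\mathcal{P}) = O$ has pairwise distinct last edges (namely the members of $O$) and pairwise distinct first edges, so upon passing to $\hat{D}$ it turns into an $(X,Y)$-path-system $\mathcal{Q}$ with $V^{-}(\mathcal{Q}) = \phi(O)$, and conversely every such $\mathcal{Q}$ un-reverses and ``un-splits'' to an internally disjoint $(r,w)$-path-system with last-edge set $O$. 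I expect this verification — together with checking that a possible edge $rw$ and the fact that no path of $\hat{D}$ can meet $X \cup Y$ internally cause no trouble — to be the bulk of the work, though it is routine rather than deep.

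Given the equivalence, I would set $X' := \phi(I)$ and fix an enumeration $\mathcal{I} = \{ \mathcal{I}_i \colon i \in \mathbb{N} \}$, with repetitions if $\mathcal{I}$ is finite, taking $I_0^{*} := I$ outright if $\mathcal{I} = \emptyset$. Then each $V_i := \phi(\mathcal{I}_i)$ is an infinite subset of $X \setminus X'$, and the hypothesis that $J \in \mathcal{G}_D(w)$ for every $J$ with $I \subseteq J \subseteq \ingoing_D(w)$ and $J \setminus I$ finite says, via the equivalence and the bijectivity of $\phi$, exactly that $X'$ is $(X,Y)$-finitely extendable in $\hat{D}$. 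Applying Lemma~\ref{lem:incomp-infinite-intersection} to $\hat{D}$, $X$, $Y$, $X'$ and $(V_i \colon i \in \mathbb{N})$ yields a set $O^{*}$ with $X' \subseteq O^{*} \subseteq X$ that is joinable to $Y$ in $\hat{D}$ and satisfies $O^{*} \cap V_i \neq \emptyset$ for all $i$. Then $I_0^{*} := \phi^{-1}(O^{*})$ lies in $\mathcal{G}_D(w)$ by the equivalence, contains $I$ since $\phi(I) = X' \subseteq O^{*}$, and meets every $\mathcal{I}_i$ since $O^{*}$ meets $V_i$, which is exactly the conclusion.

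The only point that genuinely needs attention, beyond the routine encoding, is that Lemma~\ref{lem:incomp-infinite-intersection} requires $X'$ — equivalently $I$ — to be countable. This is automatic in every intended application of this corollary, where the ambient digraph is countable, but it should be recorded explicitly (or added as a hypothesis).
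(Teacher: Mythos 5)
Your proposal is correct and takes essentially the same route as the paper's proof: the same auxiliary digraph (subdivide the in-edges of~$w$ and the out-edges of~$r$, delete~$r$ and~$w$, reverse all edges) translating membership in~$\mathcal{G}_D(w)$ into joinability of the corresponding subset of~$X$ to~$Y$, followed by an application of Lemma~\ref{lem:incomp-infinite-intersection}. The only differences are cosmetic — the paper disposes of a possible edge~$rw$ by assuming without loss of generality that ${rw \notin E(D)}$ rather than by your double subdivision — and your remark that the countability of~$I$ is implicitly needed for Lemma~\ref{lem:incomp-infinite-intersection} is a fair observation (it holds in the paper's intended applications, where~$D$ is countable).
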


\begin{proof}
    We may assume without loss of generality that~${rw \notin E(D)}$. 
    Let~$A$ be the auxiliary digraph that we obtain from~$D$ by the following way. 
    We subdivide each~${e \in \ingoing_D(w)}$ with a new vertex~$x_e$ and each~${e \in \outgoing_D(r)}$ with a new vertex~$y_e$. 
    Then we delete~$r$ and~$w$ and reverse all the edges of the resulting digraph. 
    Let~${X := \{ x_e \colon e \in \ingoing_D(w) \}}$ and 
    let~${Y := \{ y_e \colon e \in \outgoing_D(r) \}}$. 
    Now every subset~$I'$ of~$\ingoing_D(w)$ corresponds to a subset~${X_{I'} := \{ x_e \colon e \in I' \}}$ of~$X$. 
    Then~$X_I$ is is finitely ${(X,Y)}$-extendable, 
    and~$\mathcal{I}$ corresponds to a countable family of subsets of~${X \setminus X_I}$. 
    The lemma follows by applying Lemma~\ref{lem:incomp-infinite-intersection}. 
\end{proof}

\begin{lem}
    \label{lem:bubble}
    Suppose that~${I \in \mathcal{G}_D(w)}$ such that~${(I+f) \in \mathcal{G}_D(w)}$ for every~${f \in \ingoing_D(w) \setminus I}$. 
    Assume that there is a~${uv \in E(D)}$ with~${u \neq r, v \neq w}$ for which~${I \notin \mathcal{G}_{D-uv}(w)}$. 
    Then there exists a set~${S \subseteq V(D) - r}$ containing~$v$ and an~${(r,S)}$-path-system~$\mathcal{P}$ with ${V^{+}(\mathcal{P}) = S}$; 
    such that~$S$ separates the tails of~${\ingoing_D(v) - u}$ from~$r$. 
    In particular,~$uv$ is the last edge of some~${P \in \mathcal{P}}$.
\end{lem}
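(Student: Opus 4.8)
The plan is to peel off from the conclusion a single statement about a separation situated at~$v$, deduce the rest of the conclusion from it, and then prove that statement by a rerouting argument that consumes the two remaining hypotheses.

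First I would record the consequences of essentiality of~$uv$. If some $(r,w)$-path-system $\mathcal Q$ with $E^+(\mathcal Q)=I$ avoided~$uv$, it would witness $I\in\mathcal G_{D-uv}(w)$, contrary to assumption; so every such $\mathcal Q$ uses~$uv$. Since $v\neq w$, since $v\neq r$ (the root has no ingoing edges while $uv\in\ingoing_D(v)$), and since the paths of $\mathcal Q$ are internally disjoint, exactly one path of $\mathcal Q$ meets~$v$, entering it along~$uv$. Hence no path of $\mathcal Q$ uses an edge of $\ingoing_D(v)-u$, so $I\in\mathcal G_{D^*}(w)$ for $D^*:=D\upharpoonright_v\{uv\}$; and the same rerouting, now replacing the $r$--$v$ part of the path through~$v$ by a putative edge~$rv$, shows $rv\notin E(D)$. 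Write~$T$ for the set of tails of the edges of $\ingoing_D(v)-u$, so that the tails of $\ingoing_D(v)$ are exactly $T\cup\{u\}$ with $u\notin T$, and set $Z:=T\cup\{v\}$. Since $t\in Z$, no $(r,Z)$-path can use an edge $tv$ with $t\in T$; so the $(r,Z)$-paths of~$D$ are exactly those of~$D^*$.

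Next comes the reduction: \emph{it suffices to find an $S\in\mathfrak S_D(r,Z)$ with $v\in S$} --- where, as usual, the single source~$r$ is turned into a set of sources by subdividing $\outgoing_D(r)$, as in the proof of Corollary~\ref{cor:extend-intersect-family}, so that $\mathfrak P_D(r,Z)$ is the set of systems of internally disjoint directed paths from~$r$, each ending in~$Z$, that admit an orthogonal $(r,Z)$-separation. Indeed, given such an~$S$, pick $\mathcal Q'\in\mathfrak P_D(r,Z)$ orthogonal to~$S$ and let $\mathcal P$ be the family of initial segments of the paths of $\mathcal Q'$ up to their (unique) vertex in~$S$: then $\mathcal P$ is an $(r,S)$-path-system with $V^+(\mathcal P)=S$, and since $T\subseteq Z$ the set~$S$ separates the tails of $\ingoing_D(v)-u$ from~$r$. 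Finally the path of $\mathcal P$ ending at~$v$ is an $(r,Z)$-path of length at least two (as $rv\notin E(D)$), so its penultimate vertex is an internal vertex, hence lies outside~$Z\supseteq T$; being a tail of an edge of $\ingoing_D(v)$ it must therefore equal~$u$, and the last edge of that path is~$uv$. This gives the ``in particular'' clause, and with it the whole lemma.

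The remaining task --- producing $S\in\mathfrak S_D(r,Z)$ with $v\in S$ --- is, I expect, the main obstacle, and it is the step that uses essentiality of~$uv$ at~$w$ together with the extendability hypothesis $(I+f)\in\mathcal G_D(w)$ for $f\in\ingoing_D(w)\setminus I$. I would argue by contraposition: if $v$ belonged to no element of $\mathfrak S_D(r,Z)$, then~$v$ would be ``bypassable'' and the witness $\mathcal Q$ for~$I$ could be rerouted, inside~$D^*$, into a witness for~$I$ in~$D-uv$. To set this up one takes a suitably extremal element $S\in\mathfrak S_D(r,Z)$ (using the complete-lattice structure of Lemma~\ref{lem:completelattice}) with $v\notin S$, together with an orthogonal $\mathcal R\in\mathfrak P_D(r,Z)$; the $r$--$v$ initial segment of the path of $\mathcal Q$ through~$v$ then crosses~$S$ at some vertex $s\neq v$, and $s\notin T$ (otherwise replacing the $s$--$v$ segment by the edge $sv$ already yields a witness for~$I$ avoiding~$uv$). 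Feeding $\mathcal R$ together with the segments of $\mathcal Q$ beyond~$S$ into the augmenting-walk lemma (Lemma~\ref{lem:augmentingwalk}) between~$r$ and~$Z$, the ``separation'' alternative contradicts the extremality of~$S$, while iterating the ``augmentation'' alternative produces an $(r,Z)$-path-system that reaches~$Z$ strictly before~$v$; combining this with $(I+f)\in\mathcal G_D(w)$ --- invoked to re-supply any designated last edge at~$w$ that a rerouting momentarily releases --- rebuilds a witness for~$I$ in~$D-uv$, the contradiction sought. (Alternatively one might extract~$S$ directly from the incompressibility lemmas of this section, applied in the reverse digraph to~$Z$ and the out-edges of~$r$, with $X'$ a $\subseteq$-maximal subset of~$Z$ joinable from~$r$.) Either way, the delicate point is keeping the rerouting internally disjoint from the untouched part of~$\mathcal Q$ while bookkeeping exactly which designated last edges at~$w$ are lost and regained.
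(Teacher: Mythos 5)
Your preparatory steps are sound: every witness of ${I \in \mathcal{G}_D(w)}$ must use~$uv$, hence ${rv \notin E(D)}$, and your reduction is valid in the sense that \emph{if} one finds an Erd\H{o}s-Menger ${(r,Z)}$-separation~$S$ with ${v \in S}$ (where ${Z = T+v}$ and $T$ is the set of tails of ${\ingoing_D(v)-u}$), then truncating an orthogonal ${(r,Z)}$-path-system at~$S$ gives the required~$\mathcal{P}$, and the ``in particular'' clause follows from ${rv \notin E(D)}$. The problem is that this reduction target is where the entire content of the lemma lies, and you neither prove it nor make it plausible. Note that it is strictly \emph{stronger} than the lemma's conclusion: the lemma only asks for a separation covered by an ${(r,S)}$-path-system, whose paths end \emph{on}~$S$; it does not ask that they continue to~$Z$, i.e.\ that $S$ be orthogonal to an ${(r,Z)}$-path-system. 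The paper's own~$S$ is of the weaker kind: it is ${S''}$ or ${S''+w}$, where $S''$ arises from the incompressibility machinery (Lemmas~\ref{lem:incomp}, \ref{lem:incomp-finite-difference} and~\ref{lem:incomp-remains}) applied in the reversed auxiliary digraph between the subdivided edges of~$I$ at~$w$ and the out-edges of~$r$ --- so it is a separation between~$r$ and the $I$-edges at~$w$, to which every witness of~$I$ is orthogonal --- and in the second case the covering paths come from a witness of ${(I+f) \in \mathcal{G}_D(w)}$ and terminate at~$w$, with no reason to extend disjointly into~$Z$. So by aiming at an element of ${\mathfrak{S}_D(r,Z)}$ containing~$v$ you have set yourself a statement the paper never establishes and whose truth you would have to prove from scratch.

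For that key step your sketch is not a proof. Feeding ``$\mathcal{R}$ together with the segments of~$\mathcal{Q}$ beyond~$S$'' into Lemma~\ref{lem:augmentingwalk} does not typecheck (those segments are not ${(r,Z)}$-paths), the intended contradiction with an unspecified ``extremal'' choice of~$S$ in the lattice of Lemma~\ref{lem:completelattice} is not spelled out, and, most importantly, ``iterating the augmentation alternative'' has no limit argument in an infinite digraph: augmenting walks change the system by finite symmetric differences and one cannot simply iterate them to reach a system with prescribed coverage --- this is precisely the difficulty that the Aharoni--Berger theorem and the incompressibility lemmas of Section~\ref{sec:incompressibility} exist to overcome, and it is exactly why the paper routes the argument through incompressibility rather than through augmentation. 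Finally, the rerouting that is supposed to contradict ${I \notin \mathcal{G}_{D-uv}(w)}$, with ${(I+f) \in \mathcal{G}_D(w)}$ ``invoked to re-supply'' released last edges, is never constructed; you yourself flag the disjointness and bookkeeping there as the delicate point, but that bookkeeping \emph{is} the lemma. (For comparison: in the paper the hypothesis ${(I+f) \in \mathcal{G}_D(w)}$ enters only at the very end, to cover the extra vertex~$w$ when~$S''$ alone does not separate the tails.) As it stands, the proposal contains a correct peripheral reduction but a genuine gap at its core.
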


\begin{proof}
    We may assume that~${rw \notin E(D)}$ since otherwise we apply the lemma  with the digraph~${D := D - rw}$ and edge set~${I := I - rw}$ instead and extend the resulting~$\mathcal{P}$ by the trivial path $rw$ unless it satisfies already the conditions. 
    
    Suppose~$\mathcal{P}_I$ witnesses~${I \in \mathcal{G}_D(w)}$. 
    The edge~$uv$ must be in one of the paths, say in~$P_{i_0} \in \mathcal{P}_I$ where $i_0\in I$ is the last edge of $P_{i_0}$. 
    Let $A$,~$X$,~$Y$ and~$X_I$ as in the proof of Corollary~\ref{cor:extend-intersect-family}. 
    Note that~${X_I - x_{i_0}}$ is joinable to~$Y$ in~${A - vu}$ and~$X_I$ is joinable to~$Y$ in~$A$ but not in~${A - vu}$. 
    It follows from Lemma~\ref{lem:incomp} that there is some~${S' \in \mathfrak{S}_{A-vu}(X_I,Y)}$ such that~${X_I - x_{i_0}}$ is incompressible to~$S'$ in~${A - vu}$. 
    We may assume that~${S' \cap (X_I \cup Y) = \emptyset}$, 
    since otherwise we replace in~$S'$ each~${x \in S' \cap X_I}$ by its unique out-neighbour in~$A$ and each~${y \in S' \cap Y}$ by its unique in-neighbour in~$A$. 
    Now~${X_I - x_{i_0}}$ is also incompressible to~$S'$ in~$A$ by Lemma~\ref{lem:incomp-finite-difference}, 
    but~$S'$ does not separate~$Y$ from~$X_I$ in~$A$ since otherwise~$X_I$ would not be joinable to~$Y$. 
    Hence there is a ${(u,Y)}$-path in~${A - vu}$ avoiding~$S'$, but no ${(v,Y)}$-path in~${A - vu}$ avoiding~$S'$. 
    Furthermore, ${S'' := S' + v}$ separates~$Y$ from~$X_I$ in~$A$. 
    By Lemma~\ref{lem:incomp-remains},~$X_I$ is incompressible to~$S''$. 
    Since~$X_I$ is joinable to~$Y$, it follows that~${S'' \in \mathfrak{S}_{A}(X_I,Y)}$. 
    
    By translating back the results to the original digraph we have the following conclusions.
    \begin{itemize}
        \item Every path-system witnessing~${I \in \mathcal{G}_D(w)}$ must be orthogonal to~$S''$. 
        \item For any in-neighbour~${u' \neq u}$ of~$v$, any ${(r,u')}$-path~$Q$ in~$D$ which avoids~$S''$ (should one exist) must necessarily contain~$w$. 
    \end{itemize}
    
    If~$S''$ separates the tails of~${\ingoing_D(v) - u}$ from~$r$  and~$\mathcal{Q}$ is a path-system witnessing~${I \in \mathcal{G}_D(w)}$, then the set~$\mathcal{P}$ of initial segments of the paths  in~$\mathcal{Q}$ until~$S''$ satisfies the conclusion of the lemma with~${S := S''}$.
    
    Otherwise, let~${f \in \ingoing_D(w) \setminus I}$ be arbitrary. 
    By the second condition~${S'' + w}$ separates the tails of~${\ingoing_D(v) - u}$ from~$r$ (see Figure~\ref{f:separatorandpathsystem}) and by assumption there is some path-system~$\mathcal{R}$ witnessing that~${(I + f) \in \mathcal{G}_D(w)}$. 
    Since the set of paths in~$\mathcal{R}$ with last edge in~$I$ is orthogonal to~$S''$, the set~$\mathcal{P}$ of initial segments of the paths in~$\mathcal{R}$ until~${S'' + w}$ is as desired where~${S := S'' + w}$.
    
    \begin{figure}[h]
        \centering
        \begin{tikzpicture}[scale=1.7]
            
            \draw [-triangle 60] (2,1) rectangle (3,-2.4);
            \node (v10) at (-0.4,-0.8) {};
            \node at (-0.6,-0.8) {$r$};
            \node (v4) at (2.6,0.6) {};
            \node (v4') at (2.4,0.6) {};
            \node (v14) at (2.5,0.6) {$w$};
            
            \node (v2) at (2.4,-2) {$v$};
            \node (v1) at (1.2,-2) {};
            \node (v1') at (1.3,-2) {};
            \node at (1.3,-2) {$u$};
            \draw [-triangle 60] (v1') edge (v2);
            \node at (2.4,1.2) {$S''+w$};
            \node at (3.6,0.55) {$I$};
            \node at (4,-0.9) {$\mathcal{R}$};
            \node at (1.6,0.8) {$f$};
            \node at (3.4,0.95) {$i_0$};
            \node at (0.2,-1.4) {$\mathcal{P}$};
            
            \node (v3) at (4,0.8) {};
            \node (v5) at (4,0.2) {};
            \node (v6) at (4,-0.2) {};
            \draw [-triangle 60] (v3) edge (v4);
            \draw [-triangle 60] (v5) edge (v4);
            \draw [-triangle 60] (v6) edge (v4);

            \node (v8) at (3.9,-1.6) {};
            \node (v9) at (3.9,-2.2) {};
            \node (v7) at (2.4,-1) {};
            \node (v13) at (2.8,-0.2) {};

            \draw [-triangle 60, dotted] (v7) edge (v2);
            \draw [-triangle 60, dotted] (v8) edge (v2);
            \draw [-triangle 60, dotted] (v9) edge (v2);
            
            \draw [-triangle 60] plot[smooth, tension=.7] coordinates {(v10) (0.8,0.4) (v4')};
            
            \draw [-triangle 60] plot[smooth, tension=.7] coordinates {(v10) (0.6,-0.4) (1.6,-0.6) (2.4,-0.2)};
            \draw [-triangle 60] plot[smooth, tension=.7] coordinates {(-0.4,-0.8) (0.8,-1.2) (1.6,-1) (2.4,-1)};
            
            \draw [-triangle 60] plot[smooth, tension=.7] coordinates {(-0.4,-0.8) (0,-2.2) (v1)};
            \node (v11) at (1.4,0.2) {};
            \node (v12) at (1.5,-0.4) {};
            \draw [-triangle 60, dotted] (v11) edge (v4');
            \draw [-triangle 60, dotted] (v12) edge (v4');
            
            \draw [-triangle 60, dashed] plot[smooth, tension=.7] coordinates {(v2) (3.4,-1.2) (4.4,-1) (5.2,0) (5,0.8) (v3)};
            \draw [-triangle 60, dashed] plot[smooth, tension=.7] coordinates {(2.4,-1) (4.2,-0.6) (4.6,0) (4.4,0.2) (v5)};
            \draw [-triangle 60, dashed] plot[smooth, tension=.7] coordinates {(2.4,-0.2) (3.6,-0.6) (4.2,-0.4) (4.4,-0.2) (v6)};
        \end{tikzpicture}
        
        \caption{The separator $S''+w$ and path-system $\mathcal{P}$.}
    \end{figure}
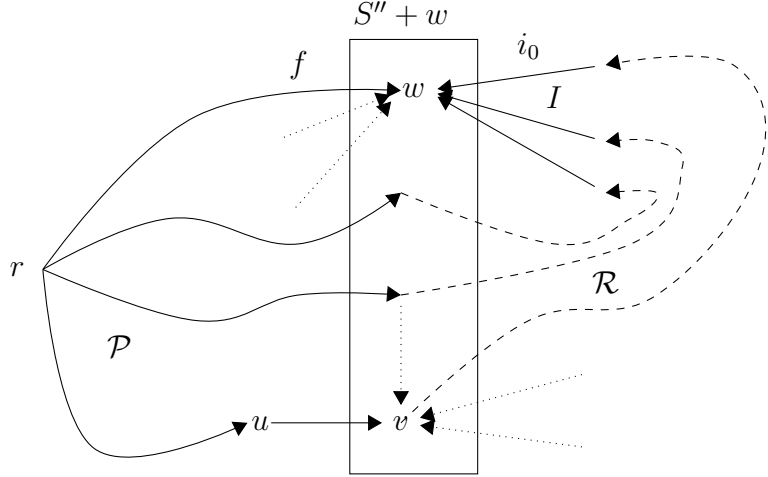\label{f:separatorandpathsystem}
\end{proof}

\section{Proof of Lemma~\ref{lem:key}}\label{sec:key-lemma}
We will use our results about incompressibility from Subsection~\ref{consequences} to prove Lemma~\ref{lem:key}.

\key*

\begin{proof}
    We may assume without the loss of generality that~${\outgoing_D(r) \subseteq E(G)}$ since otherwise we may add these edges to~$G$ while $D$ remains a $G$-quasi-flame for the new $G$. 
    Firstly, let us analyse how~${D \upharpoonright_v J}$ can fail to be a $G$-quasi-flame for some~${J \in \mathcal{G}_D(v)}$ with~${J \supseteq\ingoing_G(v)}$. 
    Since any path-system witnessing~${J \in \mathcal{G}_D(v)}$ is contained in~${D \upharpoonright_v J}$, the $G$-quasi-flame property cannot be ruined at~$v$. 
    We define~${D_0 := D \upharpoonright_v \ingoing_G(v)}$. 
    Let us say a set of edges~$I$ is \emph{relevant} if there is some vertex~${w \neq v}$ 
    such that~${\ingoing_G(w) \subseteq I \subseteq \ingoing_D(w)}$ with~${|I \setminus \ingoing_G(w)| < \aleph_0}$ and~${I \notin \mathcal{G}_{D_0}(w)}$. 
    Note that there are only countably many relevant sets. 
    Any path-system witnessing~${I \in \mathcal{G}_{D}(w)}$ for some relevant $I$ necessarily uses at most one edge from~${\ingoing_D(v) \setminus \ingoing_G(v)}$. 
    Therefore for every relevant~$I$ the set 
    \[
        N_I := \{ e \in \ingoing_{D}(v) \setminus \ingoing_{G}(v) \colon I \in \mathcal{G}_{D_0 + e}(w) \}
    \]
    is non-empty. 
    Moreover, for a~${J \in \mathcal{G}_{D}(v)}$ with~${J \supseteq \ingoing_{G}(v)}$ the following statements are equivalent:
    
    \begin{itemize}
        \item ${D \upharpoonright_v J}$ is an a $G$-quasi-flame;
        \item ${N_I \cap J \neq \emptyset}$ for every relevant~$I$.
    \end{itemize}
     
    By applying Corollary~\ref{cor:extend-intersect-family} with~$D$, ${\ingoing_G(v) \in \mathcal{G}_{D}(v)}$ and~${\{ N_I \colon |N_I| = \infty \}}$, 
    we obtain first an~${I_0^* \in \mathcal{G}_{D}(v)}$ with~${I_0^* \supseteq \ingoing_{G}(v)}$ 
    such that ${N_I \cap I_0^* \neq \emptyset}$ whenever~$N_I$ is infinite. 
    Let~$I^{*}$ be a superset of~$I_0^{*}$ that intersects all the finite~${N_I}$ and is $\subseteq$-minimal with respect to this property. 
    Clearly, the definition of~$I^*$ ensures the $G$-quasi-flame property of~${D^* := D \upharpoonright_v I^*}$ at all the vertices other than~$v$. 
    It remains to show that~${I^* \in \mathcal{G}_{D^*}(v)}$. 
    
    We  assume $I^* \supsetneq I_0^*$, since otherwise there is nothing to prove, and fix an enumeration~${I^* \setminus I_0^* = \{ e_k \colon 0 \leq k < |I^* \setminus I_0^*| \}}$. 
    For every~${u_i v_i = e_i \in I^* \setminus I_0^{*}}$, 
    by the minimality of~$I^*$ there is some relevant~${I_{i} \subseteq \ingoing_L(w)}$ for some~${w \neq v_i}$  
    such that~${I_{i} \in \mathcal{G}_{D^*}(w)}$ but~${I_{i} \not\in \mathcal{G}_{D^* - e_i}(w)}$. 
    Recall that~$D^*$ has the $G$-quasi-flame property at~$w$. 
    Moreover, ${u_i \neq r}$ since otherwise~${rv_i \in \ingoing_G(v_i) \subseteq I_0^*}$ would contradict~${e_i \in I^* \setminus I_0^{*}}$. 
    Therefore we can apply Lemma~\ref{lem:bubble} with~$D^*$, ${I_{i} \in \mathcal{G}_{D^*}(w)}$ and the edge~$e_i$, and we denote by~$S'_{i}$ and~$\mathcal{P}_{i}$ the resulting set and ${(r,S'_{i})}$-path-system, respectively. 
    
    By using the path-systems~$\mathcal{P}_i$ for~${e_i \in I^* \setminus I_0^*}$ we will build sets~$\mathcal{Q}$ and~$\mathcal{R}$ of paths in~$D^*$ with the following properties. 
    \begin{enumerate}
        [label=(\roman*)]
        \item\label{item:separate} ${S := V^{+}(\mathcal{Q} \cup \mathcal{R})}$ separates the tails of the edges in~$I_0^*$ from~$r$ in~$D^*$ where~${r \notin S}$;
        \item $\mathcal{Q}$ is an ${(r,v)}$-path-system with~${E^+(\mathcal{Q}) = I^{*} \setminus I_0^{*}}$; 
        \item $\mathcal{R}$ is an ${(r,S-v)}$-path-system; 
        \item ${V(R) \cap V(Q) = \{ r \}}$ for all~${Q \in \mathcal{Q}}$ and all~${R \in \mathcal{R}}$. 
    \end{enumerate}
    
    First we show that ${I^* \in \mathcal{G}_{D^*}(v)}$ if we have such sets~$\mathcal{Q}$ and~$\mathcal{R}$. 
    Let~$\mathcal{P}$ be a path-system witnessing~${I_0^* \in \mathcal{G}_{D^*}(v)}$. 
    Then each~${P \in \mathcal{P}}$ has a last common vertex~$v_P$ with~$S-v$ by~\ref{item:separate}. 
    We extend the unique~${R \in \mathcal{R}}$ that terminates at~$v_P$ with the terminal segment of~$P$ from~$v_P$ to~$v$. 
    The set of these extended paths united with~$\mathcal{Q}$ witness~${I^* \in \mathcal{G}_{D^*}(v)}$. 
    
    So let us build the path-systems~$\mathcal{Q}$ and~$\mathcal{R}$. 
    We construct two sequences~${(\mathcal{Q}_k \colon 0 \leq k < |I^* \setminus I_0^*|)}$ and~${(\mathcal{R}_k \colon 0 \leq k < |I^* \setminus I_0^*|)}$ of sets of paths such that:
   
    \begin{enumerate}
        [label=(\arabic*)]
        \item\label{condition4} ${S_k := V^{+}(\mathcal{Q}_k \cup \mathcal{R}_k)}$ separates the tails of the edges~${\ingoing_{D^*}(v) \setminus \{ e_i \colon 0 \leq i \leq k \}}$ from~$r$ in~$D^*$ where~${r \notin S_k}$;
        \item\label{condition1} $\mathcal{Q}_k$ is an $(r,v)$-path-system with~${E^+(\mathcal{Q}_k) = \{ e_i \colon 0 \leq i \leq k \}}$; 
        \item\label{condition2} $\mathcal{R}_k$ is an ${(r,S_k-v)}$-path-system;  
        \item\label{condition3} ${V(R) \cap V(Q) = \{ r \}}$ for all~${Q \in \mathcal{Q}_k}$ and all~${R \in \mathcal{R}_k}$; 
        \item\label{condition5} ${\mathcal{Q}_{k} \subseteq \mathcal{Q}_{k+1}}$; and
        \item\label{condition6} each path in~$\mathcal{R}_{k+1}$ extends some path in~$\mathcal{R}_k$ (not necessarily properly). 
    \end{enumerate}
    
    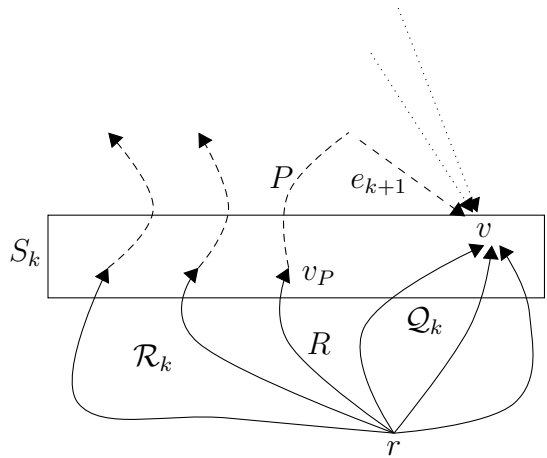
\begin{figure}[h]
        \centering
        \begin{tikzpicture}
            \node (v2) at (3.8,-0.7) {$v$};
            \node (v3) at (2.2,1.8) {};
            \node (v1) at (2.6,2.4) {};
            \node (v4) at (2,0.6) {};
            \node at (1.6,-1.3) {$v_P$};
            \node at (1.6,-2.2) {$R$};
            \node at (1.1,0) {$P$};
            \node at (3,-1.9) {$\mathcal{Q}_k$};
            \node at (-0.6,-2.4) {$\mathcal{R}_k$};
            \draw [dotted,-triangle 60] (v1) edge (v2);
            \draw [dotted, -triangle 60] (v3) edge (v2);
            \draw [dashed, -triangle 60] (v4) edge (v2);

            \node (v5) at (2.6,-3.4) {};
            \node at (2.6,-3.6) {$r$};
            \draw [-triangle 60] plot[smooth, tension=.7] coordinates {(v5) (4.2,-3) (4.4,-1.8) (4.0,-0.9)};

            \draw [-triangle 60] plot[smooth, tension=.7] coordinates {(v5) (3.6,-2) (3.9,-0.9)};

            \draw [-triangle 60] plot[smooth, tension=.7] coordinates {(v5) (2.2,-2) (3.8,-0.9)};

            \draw [-triangle 60] plot[smooth, tension=.7] coordinates {(v5) (0.4,-3.2) (-1.6,-3) (-1.2,-1.2)};

            \draw [-triangle 60] plot[smooth, tension=.7] coordinates {(v5) (0,-2.2) (0,-1.2)};

            \draw [-triangle 60] plot[smooth, tension=.7] coordinates {(v5) (1.2,-2.2) (1.2,-1.2)};

            \draw [densely dashed] plot[smooth, tension=.7] coordinates {(1.2,-1.2) (1.2,-0.2) (v4)};
            \draw [densely dashed, -triangle 60] plot[smooth, tension=.7] coordinates {(-1.2,-1.2) (-0.6,-0.4) (-1.2,0.6)};
            \draw [densely dashed, -triangle 60] plot[smooth, tension=.7] coordinates {(0,-1.2) (0.4,-0.4) (0,0.6)};
            \draw (4.6,-0.5) rectangle (-2,-1.6);
            \node at (-2.3,-1) {$S_k$};
            \node at (2.4,-0.1) {$e_{k+1}$};
        \end{tikzpicture}
        
        \caption{The construction of~$\mathcal{Q}_{k+1}$ and~$\mathcal{R}_{k+1}$. The vertex~$v$ might have some of its in-neighbours in~$S_k$.}
    \end{figure} 

    For ${k = 0}$, let~$Q_0$ be the unique path in~$\mathcal{P}_{0}$ with~${E^+(Q_0) = \{e_0\}}$ (see Lemma~\ref{lem:bubble}). 
    Then~${\mathcal{Q}_0 := \{ Q_0 \}}$ and~${\mathcal{R}_0 := \mathcal{P}_{0} - Q_0}$ satisfy the conditions. 
    
    Let us assume that we have constructed $\mathcal{Q}_k$ and $\mathcal{R}_k$. 
    Recall that the tail of~$e_{k+1}$ is separated from~$r$ by~${S_k}$ because of~\ref{condition4}. 
    Thus, each~${P \in \mathcal{P}_{k+1}}$ whose terminal vertex is separated from~$r$ by~${S_k}$ meets~$S_k-v$. 
    Let us denote the last common vertex of such a~$P$ with~$S_k-v$ by~$v_{P}$. 
    We take the unique~${R \in \mathcal{R}_{k}}$ terminating~$v_{P}$ and extend it forward by the terminal segment of~$P$ from~$v_{P}$. 
    The unique newly constructed path with last edge~$e_{k+1}$   together with~$\mathcal{Q}_k$ define the set~$\mathcal{Q}_{k+1}$, 
    and the other (not necessarily properly) forward-extended paths in~$\mathcal{R}_{k}$  
    define~$\mathcal{R}_{k+1}$. 
    
    For all the conditions but~\ref{condition4} it follows from the construction directly that~$\mathcal{Q}_{k+1}$ and~$\mathcal{R}_{k+1}$ satisfy them. 
    To show the preservation of~\ref{condition4} we need the following lemma. 
    \begin{lem}\label{lem:separsup}
        Let~$D$ be a rooted digraph, ${x \in V(D) - r}$ and let~$\mathcal{S}$ be a non-empty collection of subsets of $V(D)-r$ where each 
        ~$S\in \mathcal{S}$ separates $x$ from $r$. 
        Then the set~${\bigvee \mathcal{S}}$ of those elements~$s$ of~${\bigcup \mathcal{S}}$ that are separated from~$r$ by every~${S \in \mathcal{S}}$ also separates $x$ from $r$.
    \end{lem}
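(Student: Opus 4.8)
The plan is to argue by contradiction. Suppose $T := \bigvee\mathcal{S}$ does not separate $x$ from $r$ in $D$, and fix a directed path $P$ from $r$ to $x$ in $D$ with $V(P) \cap T = \emptyset$. Since $\mathcal{S}$ is non-empty and every $S \in \mathcal{S}$ separates $x$ from $r$, the path $P$ meets $\bigcup\mathcal{S}$; moreover $r \notin \bigcup\mathcal{S}$ because each $S \subseteq V(D) - r$. So I would let $w$ be the \emph{last} vertex of $P$, in the ordering along $P$ from $r$ to $x$, that belongs to $\bigcup\mathcal{S}$.

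Since $w \in \bigcup\mathcal{S}$ but $w \notin T$, the definition of $\bigvee\mathcal{S}$ yields some $S^* \in \mathcal{S}$ which does \emph{not} separate $w$ from $r$; hence there is a directed path $Q$ from $r$ to $w$ in $D$ with $V(Q) \cap S^* = \emptyset$, and in particular $w \notin S^*$. Concatenating $Q$ with the terminal segment of $P$ from $w$ to $x$ produces a directed walk $W$ from $r$ to $x$ in $D$. This walk avoids $S^*$: the initial part $Q$ does by construction; every vertex of $P$ strictly after $w$ lies outside $\bigcup\mathcal{S}$ by the maximality in the choice of $w$, hence outside $S^* \subseteq \bigcup\mathcal{S}$; and $w \notin S^*$. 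A directed walk from $r$ to $x$ contains a directed $(r,x)$-path, so $D$ would contain an $(r,x)$-path avoiding $S^*$, contradicting the assumption that $S^*$ separates $x$ from $r$.

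The argument is short, and I do not expect a serious obstacle; the one point that has to be handled with care is selecting $w$ to be the \emph{last} vertex of $P$ on $\bigcup\mathcal{S}$, since this is precisely what forces the spliced-in tail of $P$ to stay clear of $S^*$, together with the elementary observation that $S^*$ failing to separate $w$ from $r$ rules out $w \in S^*$.
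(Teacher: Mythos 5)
Your proof is correct. It rests on the same core mechanism as the paper's: a vertex of $P$ lying in $\bigcup\mathcal{S}$ but not in $\bigvee\mathcal{S}$ admits some $S^*\in\mathcal{S}$ not separating it from $r$, and splicing an $(r,\cdot)$-path avoiding $S^*$ onto the tail of $P$ yields an $(r,x)$-walk, hence an $(r,x)$-path, avoiding $S^*$. The difference is purely in how this splice is deployed: the paper picks the last common vertex $s_0$ of $P$ with an arbitrary $S_0\in\mathcal{S}$ and iterates, obtaining last common vertices $s_0, s_1, s_2,\dots$ that are strictly increasing along $P$, and concludes by the finiteness of $P$ (an infinite-descent style argument); you instead make one extremal choice, the last vertex $w$ of $P$ on all of $\bigcup\mathcal{S}$, which lets the single splice already contradict that $S^*$ separates $x$ from $r$. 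Your version is slightly shorter and avoids the recursion, at the cost of needing $w$ to be maximal over $\bigcup\mathcal{S}$ rather than over a single separator, which you justify correctly (the tail of $P$ after $w$ misses $\bigcup\mathcal{S}\supseteq S^*$, and $w\notin S^*$ because the rerouting path ending at $w$ avoids $S^*$). Both arguments use only the finiteness of $P$, so neither is more general than the other.
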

    
    \begin{proof}
        Suppose for a contradiction that some ${(r,x)}$-path~$P$ avoids~${\bigvee \mathcal{S}}$. 
        Let~${S_0 \in \mathcal{S}}$ be arbitrary and let~$s_0$ be the last common vertex of~$P$ with~$S_0$. 
        We conclude that~${s_0 \notin \bigvee \mathcal{S}}$. 
        Hence, there is some~${S_1 \in \mathcal{S}}$ that does not separate~$s_0$ from~$r$. 
        Then the last common vertex~$s_1$ of~$P$ with~$S_1$ is strictly later on~$P$ than~$s_0$. 
        By continuing the construction recursively we end up with infinitely many pairwise distinct~${s_n \in V(P)}$ which is a contradiction.
    \end{proof}
    
    Both~$S'_{k+1}$ and~${S_k}$ separate the tails of the edges in the set~${\ingoing_{D^*}(v) \setminus \{ e_i \colon 0 \leq i \leq k+1 \}}$ from~$r$ in~$D^*$. 
    Moreover, ${s \in S'_{k+1} \setminus S_{k+1}}$ only if~$s$ is not separated from~$r$ by~$S_k$, and hence 
    \[
        \bigvee \{ S'_{k+1}, S_k \} \subseteq S_{k+1}. 
    \]
    Therefore~\ref{condition4} follows by applying Lemma~\ref{lem:separsup} to the tails of the edges~$e_i$ for all~${i > k+1}$ separately. 
    
    If ${\left|I^* \setminus I_0^*\right| = k+1 \in \mathbb{N}}$, then~$\mathcal{Q}_k$ and~$\mathcal{R}_k$ are the desired~$\mathcal{Q}$ and~$\mathcal{R}$. 
    Suppose that ${\left|I^* \setminus I_0^*\right|=\aleph_0}$. 
    Then let~${\mathcal{Q} := \bigcup_{k \in \mathbb{N}}\mathcal{Q}_{k}}$ and~${\mathcal{R} := \bigcup_{m \in \mathbb{N}} \bigcap_{m \leq k \in \mathbb{N}} \mathcal{R}_{k}}$. 
    For all the conditions but~\ref{item:separate} it follows from the construction directly that~$\mathcal{Q}$ and~$\mathcal{R}$ satisfy them. 
    By Lemma~\ref{lem:separsup}, 
    ${\bigvee \{ S_k \colon k \in \mathbb{N} \}}$ separates the tails of the edges in the set~$I_0^* $ from~$r$ in~$D^*$. 
    Note that if an~${R \in \mathcal{R}_k}$ is properly extended to a path in~$\mathcal{R}_{k+1}$ in step~$k$ then its last vertex is not separated by~$S_{k+1}$ from~$r$.
    Thus~${\bigvee \{ S_k \colon k \in \mathbb{N} \} \subseteq V^{+}(\mathcal{R})}$ from which~\ref{item:separate} follows. 
\end{proof}

\begin{bibdiv}
\begin{biblist}

\bib{infinite-menger}{article}{
   author={Aharoni, Ron},
   author={Berger, Eli},
   title={Menger's theorem for infinite graphs},
   journal={Invent. Math.},
   volume={176},
   date={2009},
   number={1},
   pages={1--62},
   issn={0020-9910},
   review={\MR{2485879}},
   doi={10.1007/s00222-008-0157-3},
}

\bib{calvillo-vives}{thesis}{
	author={Calvillo-Vives, Gilberto}, 
	title={Optimum branching systems}, 
	date={1978},
	type={Ph.D. Thesis},
	organization={University of Waterloo},
}

\bib{diestel-book}{book}{
   author={Diestel, Reinhard},
   title={Graph theory},
   series={Graduate Texts in Mathematics},
   volume={173},
   edition={5},
   publisher={Springer, Berlin},
   date={2017},
   pages={xviii+428},
   isbn={978-3-662-53621-6},
   review={\MR{3644391}},
   doi={10.1007/978-3-662-53622-3},
}

\bib{arbpacking}{article}{
   author={Jo\'{o}, Attila},
   title={Packing countably many branchings with prescribed root-sets in
   infinite digraphs},
   journal={J. Graph Theory},
   volume={87},
   date={2018},
   number={1},
   pages={96--107},
   issn={0364-9024},
   review={\MR{3729839}},
   doi={10.1002/jgt.22145},
}

\bib{attila-flames}{article}{
   author={Jo\'{o}, Attila},
   title={Vertex-flames in countable rooted digraphs preserving an Erd\H{o}s-Menger separation for each vertex},
   journal={Combinatorica},
   volume={39},
   date={2019},
   pages={1317--1333},
   doi={10.1007/s00493-019-3880-z},
}

\bib{attila-lattice}{article}{
   author={Jo\'{o}, Attila},
   title={The Complete Lattice of Erd\H{o}s-Menger Separations},
   date={2019}, 
   eprint={1904.06244},
   note={Preprint},
}

\bib{lovasz}{article}{
   author={Lov\'{a}sz, L.},
   title={Connectivity in digraphs},
   journal={J. Combinatorial Theory Ser. B},
   volume={15},
   date={1973},
   pages={174--177},
   issn={0095-8956},
   review={\MR{325439}},
   doi={10.1016/0095-8956(73)90018-x},
}

\bib{pym}{article}{
   author={Pym, J. S.},
   title={The linking of sets in graphs},
   journal={J. London Math. Soc.},
   volume={44},
   date={1969},
   pages={542--550},
   issn={0024-6107},
   review={\MR{234858}},
   doi={10.1112/jlms/s1-44.1.542},
}

\end{biblist}
\end{bibdiv}

\end{document}